\documentclass[12pt]{article}
\usepackage{fancyhdr}
\usepackage{setspace}
\usepackage{enumitem}
\setlist{nolistsep}
\usepackage[mathscr]{eucal}

\usepackage{pifont}
\usepackage{amssymb}
\usepackage{amsmath}
\usepackage{graphicx}
\usepackage[usenames,dvipsnames]{color}
\usepackage{stmaryrd} \newcommand{\rbose}{\rrbracket}\newcommand{\lbose}{\llbracket}\newcommand{\rround}{\rrparenthesis}\newcommand{\lround }{\llparenthesis}

\setlength{\topmargin}{-1.5cm} 
\setlength{\textheight}{23cm}
\setlength{\oddsidemargin}{-0.2cm}
\setlength{\evensidemargin}{-0.2cm} 
\setlength{\textwidth}{16.7cm}
\setlength{\parindent}{0pt}
\setlength{\parskip}{.35cm}

\newtheorem{theorem}{Theorem}[section]
\newtheorem{lemma}[theorem]{Lemma}
\newtheorem{corollary}[theorem]{Corollary}

\newtheorem{definition}[theorem]{Definition}

\newtheorem{result}[theorem]{Result}
\newenvironment{proof}{\noindent{\bf Proof}\hspace{0.5em}}
    { \null \hfill $\square$ \par}

\newcommand{\VC}{\V(\Bo{\C})}

\newcommand{\VCBB}{\V([\C])}

\newcommand{\SI}{{\textup{[S1]}}}
\newcommand{\SII}{{\textup{[S2]}}}
\newcommand{\SIII}{{\textup{[S3]}}}
\newcommand{\TI}{{\textup{[T1]}}}
\newcommand{\TII}{{\textup{[T2]}}}
\newcommand{\TIII}{{\textup{[T3]}}}
\newcommand{\TIV}{{\textup{[T4]}}}
\newcommand{\EI}{{\textup{[E1]}}}
\newcommand{\EII}{{\textup{[E2]}}}
\newcommand{\EIII}{{\textup{[E3]}}}
\newcommand{\EIV}{{\textup{[E4]}}}

    \newcommand{\Fq}{\mathbb F_{q}}
\newcommand{\Fqqq}{\mathbb F_{q^3}}

\newcommand{\Fqqqqqq}{\mathbb F_{q^6}}

\newcommand{\plus}{{\raisebox{.2\height}{\scalebox{.5}{\textup{+}}}}}
\newcommand{\Cplus}{\C^{{\rm \pmb\plus}}}
\newcommand{\si}{\Sigma_\infty}
\renewcommand{\O}{\mathcal O}

\newcommand\B{{\cal B}}

\newcommand{\X}{\mathcal X}

\newcommand{\R}{\mathcal R}
\newcommand\C{{\cal C}}

\newcommand\V{{\cal V}}

\newcommand{\IBB}{{\cal I}_{\mbox{\scriptsize\sf BB}}}
\newcommand{\IB}{{\cal I}_{\mbox{\scriptsize\sf Bose}}}
\newcommand{\XT }{{X}_{\mbox{\raisebox{.01\height}{\scalebox{.8}{{\tiny \!$T$}}}}}}

\newcommand{\Pit}{\Gamma}

\newcommand\A{{\cal A}}

\newcommand\N{{\cal N}}

\renewcommand{\S}{\mathcal S}
\renewcommand{\P}{\mathcal P}
\newcommand{\K}{\mathcal K}

\newcommand{\Q}{\mathscr Q}

\newcommand{\li}{\ell_\infty}



\newcommand{\takeaway}{\backslash}
\renewcommand\setminus{\backslash}
\newcommand{\st}{\,|\,}

\newcommand\PGammaL{{\textup{P}\Gamma \textup{L}}}
\newcommand\PGL{{\rm PGL}}

\newcommand\PG{{\rm PG}}

\renewcommand{\star}{{^{\mbox{\tiny\ding{73}}}}}
\newcommand\Bo[1]{\mbox{$\lbose#1\rbose$}}

\newcommand{\gstar}{g^{\mbox{\tiny\ding{72}}}}
\newcommand{\gqstar}{{{g}^{\mbox{\tiny\ding{72}}}}^q}
\newcommand{\gqqstar}{{{g}^{\mbox{\tiny\ding{72}}}}^{q^2}}

\newcommand{\sistar}{{\Sigma}^{\mbox{\tiny\ding{73}}}_{\mbox{\raisebox{.4\height}{\scalebox{.6}{$\infty$}}}}}
\newcommand{\sistarstar}{{\Sigma}^{\mbox{\tiny\ding{72}}}_{\mbox{\raisebox{.4\height}{\scalebox{.6}{$\infty$}}}}}

\newcommand{\Pirstar}{{\Pi}^{\mbox{\tiny\ding{73}}}_r}
\newcommand{\Pirstarstar}{{\Pi}^{\mbox{\tiny\ding{72}}}_r}
\newcommand{\Pigstar}{{\Pi}^{\mbox{\tiny\ding{73}}}_g}
\newcommand{\Pigstarstar}{{\Pi}^{\mbox{\tiny\ding{72}}}_g}
\newcommand{\Pithreestar}{{\Pi}^{\mbox{\tiny\ding{73}}}_3}

\newcommand{\Sigmarstarstar}{{\Sigma}^{\mbox{\tiny\ding{72}}}_r}
\newcommand{\Sigmaqstar}{{\Sigma}^{\mbox{\tiny\ding{73}}}_{6,q}}

\newcommand{\Qonestar}{{\Q}^{\mbox{\tiny\ding{73}}}_1}
\newcommand{\Qninestar}{{\Q}^{\mbox{\tiny\ding{73}}}_9}
\newcommand{\Qonestarstar}{{\Q}^{\mbox{\tiny\ding{72}}}_1}
\newcommand{\Qninestarstar}{{\Q}^{\mbox{\tiny\ding{72}}}_9}

\renewcommand{\star}{{^{\mbox{\tiny\ding{73}}}}}
\newcommand{\blackstar}{{^{\mbox{\tiny\ding{72}}}}}

\newcommand{\ellbstarstar}{{\ell^{\mbox{\tiny\ding{72}}}_b}}

\newcommand{\Pstar}{{[P]^{\mbox{\tiny\ding{73}}}}}

\newcommand{\Kstar}{{\K^{\mbox{\tiny\ding{73}}}}}
\newcommand{\Kstarstar}{{\K^{\mbox{\tiny\ding{72}}}}}
\newcommand{\Nstar}{{\N^{\mbox{\tiny\ding{73}}}}}

\newcommand{\VCstar}{\V(\Bo{\C})^{\!\mbox{\tiny\ding{73}}}}
\newcommand{\VCBBstar}{\V([\C])^{\!{\mbox{\tiny\ding{73}}}}}
\newcommand{\VCstarstar}{\V(\Bo{\C})^{\!\mbox{\tiny\ding{72}}}}
\newcommand{\VCBBstarstar}{\V([\C])^{\!\mbox{\tiny\ding{72}}}}

\newcommand{\Nthreestar}{{\N}^{\mbox{\tiny\ding{73}}}_3}

\newcommand{\Nfourstar}{{\N}^{\mbox{\tiny\ding{73}}}_4}

\newcommand{\Nsixstar}{{\N}^{\mbox{\tiny\ding{73}}}_6}
\newcommand{\Nsixstarstar}{{\N}^{\mbox{\tiny\ding{72}}}_6}
\newcommand{\Ntwostarstar}{{\N}^{\mbox{\tiny\ding{72}}}_2}

\newcommand{\Qstar}{{\Q^{\mbox{\tiny\ding{73}}}}}

\newcommand\conjB[1]{{  #1^{\cpi}}}
\newcommand\conjBsq[1]{{  #1^{\mathsf c^2_{\pi}}}}

  \newcommand{\cpi}{\mathsf c_\pi}
   \newcommand{\ocpi}{\bar{\mathsf c}_\pi}

 \newcommand{\cpis}{\mathsf c^2_\pi}
  \newcommand{\ocpis}{\bar{\mathsf c}^2_\pi}
 \newcommand{\ocpic}{\bar{\mathsf c}^3_\pi}
  \newcommand{\ocpif}{\bar{\mathsf c}^4_\pi}
   \newcommand{\ocpiff}{\bar{\mathsf c}^5_\pi}
  \newcommand{\ocpii}{\bar{\mathsf c}^i_\pi} 
  
  \newcommand{\cb}{\mathsf c_b}
   \newcommand{\ocb}{\bar{\mathsf c}_b}

\newcommand{\Label}{\label}




\newcommand{\Cplusplus}{\C^{{\rm \plus\!\plus}}}
\newcommand{\barCplusplus}{{\bar \C}^{{\rm \plus\!\plus}}}
\newcommand{\liplusplus}{\ell^{{\rm \plus\!\plus}}_\infty}

\begin{document}
%


\title{A characterisation of $\Fq$-conics of $\PG(2,q^3)$}
%
\author{S.G. Barwick, Wen-Ai Jackson and Peter Wild}
\date{\today}
\maketitle
%
%
%
%
Keywords: Bruck-Bose representation, Bose representation,   $\Fq$-subplanes, $\Fq$-conics, normal rational curves.
AMS code: 51E20
%



\begin{abstract}
This article considers an $\Fq$-conic contained in an $\Fq$-subplane  of $\PG(2,q^3)$, and shows that it corresponds to a normal rational curve in the Bruck-Bose representation in $\PG(6,q)$. 
The main result characterises which normal rational curves of $\PG(6,q)$ correspond via the Bruck-Bose representation to $\Fq$-conics of $\PG(2,q^3)$. The normal rational curves   of interest are called 3-special, a property which describes how the extension of the normal rational curve meets  the transversal lines of the regular 2-spread of the Bruck-Bose representation. 
The proof uses geometric arguments that exploit the interaction between the Bruck-Bose representation  of $\PG(2,q^3)$ in $\PG(6,q)$, and the Bose representation  of $\PG(2,q^3)$ in $\PG(8,q)$. 
%
\end{abstract}

\section{Introduction}

An $\Fq$-plane of $\PG(2,q^n)$ is a subplane of $\PG(2,q^n)$ which has order $q$. 
An $\Fq$-conic is a non-degenerate conic in an $\Fq$-subplane, and so is projectively equivalent to a non-degenerate conic of $\PG(2,q)$.
The representation of $\Fq$-conics in the Bruck-Bose representation when $n=2$ was first looked at in \cite{quinn-conic}; and the following complete characterisation was given in \cite{BJW2}. 

\begin{result} An $\Fq$-conic in $\PG(2,q^2)$ corresponds in the $\PG(4,q)$ Bruck-Bose representation to a 2-special normal rational curve. Further, a normal rational curve in $\PG(4,q)$ corresponds to an $\Fq$-conic in $\PG(2,q^2)$ if and only if it is 2-special.
\end{result}

The notion of 2-special describes how the extension of a normal rational curve meets the transversal lines of the Bruck-Bose spread. Let $\PG(4,q)$ have hyperplane at infinity $\si$ and regular 1-spread $S$ in $\si$.  
Let $\N_r$ be an
$r$-dimensional normal rational curve, $ r\leq 4$, that is not contained in $\si$, then $\N_r$ meets $\si$ in $r$ points $P_1,\ldots,P_r$, possibly repeated, possibly in an extension. We say a point $P$ in an extension of $\si$ has weight $w(P)=1$ if $P$ lies on an extended transversal line of the regular 1-spread $S$; otherwise, $w(P)=2$.
The normal rational curve $\N_r$ is called \emph{ $2$-special} with respect to $S$ if $w(P_1)+\ldots+w(P_r)=4$. 

 This article looks at the representation of $\Fq$-conics in the Bruck-Bose representation when $n=3$. The main result is the following complete characterisation of $\Fq$-conics of $\PG(2,q^3)$ in the $\PG(6,q)$ Bruck-Bose representation.
 
  \begin{theorem}\Label{thm-main}  Let $\PG(6,q)$ have hyperplane at infinity $\si$ and regular 2-spread $\S$ in $\si$. An $r$-dimensional normal rational curve $\N_r$, $r\leq 6$,  corresponds via the Bruck-Bose representation to an $\Fq$-conic of $\PG(2,q^3)$ if and only if $\N_r$ is $3$-special. 
\end{theorem}

 The notion of 3-special  normal rational curves is defined in Definitions \ref{def-good} and \ref{def-2spec}. It relates to how the extension of the normal rational curve meets the three transversal lines of the Bruck-Bose regular 2-spread, and is a non-trivial generalisation of the idea of 2-special  normal rational curves in $\PG(4,q)$.

This characterisation is proved using geometric arguments that exploit the interaction between the Bruck-Bose representation in $\PG(6,q)$ and the Bose representation in $\PG(8,q)$, and builds on previous work by the authors on these representations.

The article is set out as follows. 
We look at  $\PG(2,q^3)$ using several different models, namely the Bruck-Bose representation in $\PG(6,q)$ with the usual conventions; a more precise exact-at-infinity Bruck-Bose representation in $\PG(6,q)$; and the Bose representation in $\PG(8,q)$. 
In Section~\ref{sec-back} we describe our notation which is carefully designed to differentiate between the different  models. For easy reference, the notation  
is summarised     in Section~\ref{sec-notn}.

Section~\ref{sec:Fqconic} begins with some preliminaries on $\Fq$-conics of $\PG(2,q^3)$. 
Result~\ref{Fqconic-cases} outlines eleven different cases for how an $\Fq$-conic sits in relation to the line at infinity $\li$.  
The main result of Section~\ref{sec:Fqconic} is   Theorem~\ref{fqconic-order6}, which 
 uses the interplay between the Bose representation and the Bruck-Bose representation to show that an $\Fq$-conic of $\PG(2,q^3)$ corresponds in the `exact-at-infinity' Bruck-Bose representation in  $\PG(6,q)$ to an irreducible curve of degree $k$ and a linear component which is contained in the hyperplane at infinity. 

In Section~\ref{sec4}, we show that this irreducible curve is a  normal rational curve, and we determine $k$ and describe the linear component  in detail for the eleven different cases. Then, using the  traditional convention for the Bruck-Bose representation, we ignore the linear component at infinity, and deduce that every $\Fq$-conic of $\PG(2,q^3)$ corresponds to a normal rational curve of $\PG(6,q)$.  The eleven cases can then be reduced to the following three cases: an $\Fq$-conic in an $\Fq$-subplane secant to $\li$ corresponds to a non-degenerate conic of $\PG(6,q)$; an $\Fq$-conic in an $\Fq$-subplane tangent to $\li$ corresponds to either a 4-dimensional or 6-dimensional normal rational curve of $\PG(6,q)$; and 
an $\Fq$-conic in an $\Fq$-subplane exterior to $\li$ corresponds to either a 3-dimensional or 6-dimensional normal rational curve of $\PG(6,q)$. In each case, we determine the relationship between the normal rational curve and the transversal lines of the regular 2-spread of the Bruck-Bose representation. 

We then consider the converse, and determine which normal rational curves of $\PG(6,q)$ correspond via the Bruck-Bose representation to $\Fq$-conics of $\PG(2,q^3)$. 
    In  Section~\ref{sec-2spec}, we look at how a normal rational curve  meets the transversal lines of the Bruck-Bose  regular 2-spread, and define $3$-special normal rational curves. In Theorem~\ref{2spec-h3} we classify the $3$-special normal rational curves of $\PG(6,q)$. In Section~\ref{sec-main} we look at each possible $3$-special normal rational curve and show that it in each case it corresponds via the Bruck-Bose representation to an $\Fq$-conic of $\PG(2,q^3)$. This leads to a proof of the main result Theorem~\ref{thm-main}. 
We conclude in Section~\ref{conclude} with a  discussion of the general case.

\section{Preliminaries}\Label{sec-back}

An \emph{$r$-dimensional normal rational curve} in $\PG(n,q)$, $q\geq r$,   is a set of points
lying in an $r$-space which is projectively equivalent to the set $$\{(1,\theta,\ldots,\theta^{r})\st\theta\in\Fq\}\cup\{(0,\ldots,0,1)\},$$ see \cite{HT}. We abbreviate this to an \emph{$r$-dim nrc.} 
We will repeatedly use the geometrical property that an $r$-dim nrc $\N_r$ is a set of $q+1$ points in an $r$-space,  such that no $t+2$ points of $\N_r$ lie in a $t$-space, $t=1,\ldots,r-1$. 
As we work with $6$-dim nrcs in this article, we assume $q\geq6$ throughout.

\subsection{Conjugacy with respect to an  $\Fq$-subplane}\Label{sec-conj}

Let $\Fq$ denote the finite field of prime power order $q$. 
An $\Fq$-subplane  of $\PG(2,q^3)$ is a subplane   of $\PG(2,q^3)$  which has order $q$, 
that is, a subplane which is isomorphic to $\PG(2,q)$. An $\Fq$-subline is a line of an $\Fq$-subplane, that is,  isomorphic to  $\PG(1,q)$. We will define conjugacy with respect to an $\Fq$-subplane or $\Fq$-subline. 

Let $\bar \pi$ be an $\Fq$-subplane of $\PG(2,q^3)$.
 Acting on the points of $\PG(2,q^3)$ is a unique collineation group ${\bar G}_\pi\subseteq \PGammaL(3,q^3)$ which fixes $\bar \pi$ pointwise and has order 3.
We need to distinguish between the two non-identity maps in ${\bar G}_\pi$, and 
as discussed in \cite{BJW3}, we can without loss of generality  write ${\bar G}_\pi=\langle \ocpi\rangle$ with 
 \begin{eqnarray}\label{defcpi}
 \ocpi\colon \PG(2,q^3)&\longrightarrow& \PG(2,q^3) \nonumber \\
 \bar  X=\begin{pmatrix} x\\y\\z\end{pmatrix}&\longmapsto & B \bar X^q  = B  \begin{pmatrix} x^q\\y^q\\z^q\end{pmatrix}\end{eqnarray}
 with $B$ a $3\times3$ non-singular matrix over $\Fqqq$. So ${\ocpi}^3=id$ and  for $\bar X\in\PG(2,q^3)\setminus\bar \pi$, the three  points $\bar X$, $\bar X^{\ocpi}$, $\bar X^{\ocpis}$ are called \emph{conjugate with respect to $\bar \pi$}. Note that  
 $\bar X$, $\bar X^{\ocpi}$, $\bar X^{\ocpis}$ are collinear  if and only if $\bar X$ lies on an extended line of $\bar \pi$.
Moreover,  we can uniquely extend the plane $\PG(2,q^3)$ to $\PG(2,q^6)$, and 
  the collineation $\ocpi\in \PGammaL(3,q^3)$ has a natural extension to a collineation of $\PGammaL(3,q^6)$
acting on points of $\PG(2,q^6)$; we  use the same notation $\ocpi$ for this (extended) collineation. The collineation $\ocpi$ has order 3 when acting on $\PG(2,q^3)$, and order 6 when acting on $\PG(2,q^6)$. 
Under the collineation $\ocpi$, a point $\bar X\in\PG(2,q^6)$ lies in an orbit of size:
 $1$ if  $\bar X\in\bar \pi$;
 $3$ if $\bar X\in\PG(2,q^3)\setminus\bar \pi$;
 $2$ or $6$ if $\bar X\in\PG(2,q^6)\setminus\PG(2,q^3)$.

 Similarly, if $\bar b$ is an $\Fq$-subline of a line $\bar \ell_b$ of $\PG(2,q^3)$, then acting on the points of $\bar \ell_b$ is a unique collineation group $\bar G_b\subseteq \PGammaL(2,q^3)$ of order 3 which fixes $\bar b$ pointwise. 
Moreover, ${\bar G}_\pi$ restricted to acting on $\bar \ell_b$ is isomorphic to $\bar G_b$ if and only if  $\bar b$ is a line of $\bar \pi$. 
Without loss of generality we can write ${\bar G}_b=\langle \ocb \rangle $  where for a point $\bar X\in\bar\ell_b$, 
\begin{eqnarray}
\label{defcb}
\ocb(\bar X)&=&D\bar X^q
\end{eqnarray}
 with $D$ a non-singular matrix over $\Fqqq$, so $\ocb^3=id$.

\subsection{Carrier points}

An $\Fq$-subplane exterior to a line $\ell$ determines two \emph{carrier} points on $\ell$ as follows.
Consider  the collineation group $\bar G=\PGL(3,q^3)$ acting on $\PG(2,q^3)$.
Let $\bar G_{\pi,\ell}$ be the subgroup of $\bar G$
fixing an $\Fq$-subplane $\bar \pi$, and a line $\bar \ell$ exterior to $\bar \pi$. 
Then 
$\bar G_{\pi,\ell}$ is cyclic of order $q^2+q+1$,  acts regularly on the points and on the lines of $\bar \pi$, and  fixes exactly three points of $\PG(2,q^3)$,  called the \emph{$(\bar \pi,\bar \ell)$-carriers} of $\bar \pi$, two of which lie on $\bar \ell$.
That is, the $(\bar \pi,\bar \ell)$-carriers are the three fixed points $$\bar {\ell} \cap
\bar {\ell\,}^{\ocpi},\quad \bar {\ell} \cap
 \bar {\ell\,}^{\ocpis},\quad \bar {\ell\,}^{\ocpi} \cap
\bar {\ell\,}^{\ocpis}.$$

    \subsection{Variety-extensions}
    
  Let $f_1(x_0,\ldots,x_n)=0, \ldots, f_k(x_0,\ldots,x_n)=0$ be $k$  homogeneous $\Fq$-equations (that is, all coefficients of $f_i$ lie in $\Fq$).
These equations give rise to a variety in $\PG(n,q)$, the pointset of the variety is denoted $V(f_1,\ldots,f_k)$ and is the set of all points in $\PG(n,q)$ which satisfy all $k$ equations. In this article we primarily work with quadrics, that is the case where $f_i$ are homogeneous of degree $2$. 
The
 pointset of a variety   
     in $\PG(n,q)$ has a natural extension to the pointset of a variety in the cubic extension $\PG(n,q^3)$ and to $\PG(n,q^6)$ (a sextic extension of $\PG(n,q)$, and a quadratic extension of $\PG(n,q^3)$).  
Let $\K=V(f_1,\ldots,f_k)$, then $\Kstar$ denotes the set of points in $\PG(n,q^3)$ which satisfy the (same) $k$ equations $f_1(x_0,\ldots,x_n)=0, \ldots, f_k(x_0,\ldots,x_n)=0$. Similarly,  $\Kstarstar$ denotes the set of points in $\PG(n,q^6)$ which satisfy the  $k$ equations $f_1(x_0,\ldots,x_n)=0, \ldots, f_k(x_0,\ldots,x_n)=0$.

In particular, if $\Pi_r$ is an $r$-dimensional subspace of $\PG(n,q)$, then $\Pirstar$ is the natural extension to an $r$-dimensional subspace of $\PG(n,q^3)$, and $\Pirstarstar$ is the natural extension to $\PG(n,q^6)$. Note that if $\Sigma_r$ is an $r$-dimensional subspace of $\PG(n,q^3)$ (possibly disjoint from $\PG(n,q)$), then we denote the (quadratic) extension of $\Sigma_r$ to $\PG(n,q^6)$ by $\Sigmarstarstar$. 

In this article we use the $\star$ and $\blackstar$ notations for varieties in the Bruck-Bose and Bose representations, that is, when $n=5,6,8$. We do not use the $\star,\blackstar$ notation in $\PG(2,q^3)$.

 \subsection{Regular 2-spreads}


We will work with the Desarguesian plane $\PG(2,q^3)$, and both the  
Bruck-Bose and Bose representations, so we work with 2-spreads of $\PG(5,q)$ and $\PG(8,q)$ respectively.
A 2-{\em spread} of $\PG(n,q)$, $n=3s+2$, is a set of  planes that partition the points of 
$\PG(n,q)$.
We use the following construction of a regular 2-spread of $\PG(3s+2,q)$, see \cite{CasseOKeefe}.
Embed $\PG(3s+2,q)$ in $\PG(3s+2,q^3)$ and consider the automorphic collineation in  $\PGammaL(3s+3,q^3)$ of order 3 that  fixes $\PG(3s+2,q)$ pointwise, 
$$X=(x_0,\ldots,x_{3s+2}) \longmapsto 
X^{q}=(x_0^q,\ldots,x_{3s+2}^q).$$
 Let $\Pi $ be an $s$-space in $\PG(3s+2,q^3)$ which is disjoint from $\PG(3s+2,q)$,  such that $\Pi,\Pi^{q},\Pi^{q^{2}}$ span $\PG(3s+2,q^3)$. 
For a point $X\in\Pi$, the plane $\langle X,X^{q}, X^{q^2}\rangle$ meets $\PG(3s+2,q)$ in a plane. The planes $\langle X,X^{q}, X^{q^2}\rangle\cap\PG(3s+2,q)$ for $X\in\Pi$ form a 2-spread of $\PG(3s+2,q)$. The $s$-spaces  $\Pi$, $\Pi^{q}$ and $\Pi^{q^2}$ are called 
 the three {\em transversal spaces} of the spread. A \emph{regular 2-spread} is any    2-spread of $\PG(3s+2,q)$  constructed in this way.

\subsection{The Bruck-Bose representation of $\PG(2,q^3)$ in $\PG(6,q)$}
 
 We  use  the linear representation of a finite
translation plane $\P$ of dimension at most three over its kernel,
an idea which was developed independently by
Andr\'{e}~\cite{andr54} and Bruck and Bose
\cite{bruc64,bruc66}. We will use the vector space setting following Bruck and Bose.
Let $\si$ be a hyperplane of $\PG(6,q)$ and let $\S$ be a regular 2-spread
of $\si$. The phrase {\em a subspace of $\PG(6,q)\takeaway\si$} is used to
  mean a subspace of $\PG(6,q)$ that is not contained in $\si$.  We define the following incidence
structure $\IBB$.
The \emph{points} of $\IBB$ are the points of $\PG(6,q)\takeaway\si$ and the planes of the regular 2-spread $\S$.  The \emph{lines} of $\IBB$ are the 3-spaces of $\PG(6,q)\takeaway\si$ that contain
  an element of $\S$ and the line at infinity corresponds to the set of planes of  $\S$. \emph{Incidence} in $\IBB$ is induced by incidence in
  $\PG(6,q)$.
Then the incidence structure $\IBB$ is isomorphic to $PG(2,q^3)$.

Throughout this article, we let $\S$ denote a regular 2-spread of $\si\cong\PG(5,q)$. 
The regular 2-spread $\S$ has three conjugate transversals lines  in $\sistar\cong\PG(5,q^3)$ which we denote by $g,\ g^q,\ g^{q^2}$. The transversal lines of the regular 2-spread $\S$ play an important role in characterising varieties of $\PG(2,q^3)$. 

We also need to consider the quadratic extension of $\sistar$, that is $\sistarstar\cong\PG(5,q^{6})$, and we denote the extension of the three transversal lines $g,\ g^q,\ g^{q^2}$ to lines of $\PG(5,q^{6})$
 by
${\gstar},\ {\gstar}^q,\  {\gstar}^{q^{2}}$ respectively.

 If $\bar \K$ is a set of points in $\PG(2,q^3)$, then we denote the corresponding set of points in the Bruck-Bose representation by $[\K]$.   So if $\bar P\in\PG(2,q^3)\setminus\li$, then $[P]$ is a point in $\PG(6,q)\setminus\si$. If $\bar P\in\li$, then $[P]$ is  plane of the 2-spread $\mathcal S$ in $\si\cong\PG(5,q)$. Further, in the extension to $\PG(5,q^3)$, the plane $\Pstar$ meets the transversal $g$ in a point $P=\Pstar\cap g$. That is, there is a 1-1 correspondence $$\bar P\in\li\quad \longleftrightarrow \quad  P\in g.$$ 

The representations of $\Fq$-sublines, and tangent and secant $\Fq$-subplanes of $\PG(2,q^3)$ in $\PG(6,q)$ were determined in \cite{BJ-FFA,BJ-iff}. We repeatedly use the following two cases.

\begin{result}\Label{BB-Baer-3} \cite{BJ-FFA,BJ-iff}
Consider  the Bruck-Bose representation of $\PG(2,q^3)$ in $\PG(6,q)$. 
\begin{enumerate}
\item An $\Fq$-subline of $\PG(2,q^3)$ that meets $\li$ in a point $\bar T$ corresponds to a line of $\PG(6,q)\setminus\si$ meeting the spread plane $[T]$. 
\item An $\Fq$-subplane $\bar \pi$ of $\PG(2,q^3)$  secant to $\li$ corresponds  to a plane of $\PG(6,q)\setminus\si$ that meets each plane of the  2-regulus of $\S$ corresponding to the  $\Fq$-subline $\bar \pi\cap\li$.
\end{enumerate}
Moreover, the converse of each of these correspondences holds.
\end{result}

\subsection{The exact-at-infinity Bruck-Bose representation}\Label{sec:exact}

As per our usual convention for the Bruck-Bose representation, the correspondences in Result~\ref{BB-Baer-3} are not necessarily exact at infinity.  For example, we can restate Result~\ref{BB-Baer-3}(2) to be exact at infinity as follows. 

\begin{result}\Label{exact-subplane}{\rm 
Let $\bar\pi$ be an $\Fq$-subplane of $\PG(2,q^3)$ secant to $\li$. In  the \emph{exact-at-infinity} Bruck-Bose representation in $\PG(6,q)$, $[\pi]$ consists of the 2-regulus  $\R$ of $\S$ that corresponds to the $\Fq$-subline $\bar b=\bar\pi\cap\li$, \emph{together with} a  
plane $\alpha$  of $\PG(6,q)\setminus\si$ that meets each plane of $\R$ in a point. }
\end{result}

For clarity, we distinguish between this and the usual convention by including   the phrase ``the \emph{exact-at-infinity} Bruck-Bose representation in $\PG(6,q)$''.

\subsection{The Bose representation of $\PG(2,q^3)$ in $\PG(8,q)$}

Bose \cite{Bose} gave a construction to represent the Desarguesian plane $\PG(2,q^2)$ in $\PG(5,q)$ using a regular $1$-spread. 
More generally, we can use the technique of field reduction to generalise this to represent 
 the Desarguesian plane  $\PG(2,q^h)$ using a regular  $(h-1)$-spread in $\PG(3h-1,q)$,  see for example \cite{fieldreduction}.   This idea goes back to Segre \cite{segre} who introduced Desarguesian spreads arising from field reduction. 

 We work with the following  representation of $\PG(2,q^3)$ using a regular  2-spread in $\PG(8,q)$. 
Let $\mathbb S$ be a regular 2-spread in $\PG(8,q)$.  Let $\IB$ be the incidence structure with {\em points} the $q^6+q^3+1$  planes of  ${\mathbb S}$; {\em lines} the  5-spaces of $\PG(8,q)$ that meet ${\mathbb S}$ in $q^3+1$ planes; and {\em incidence} is inclusion. 
The  
5-spaces of $\PG(8,q)$ that meet ${\mathbb S}$ in $q^3+1$ planes form a dual spread $\mathbb H$  (that is, each 7-space of $\PG(8,q)$ contains a unique 5-space in $\mathbb H$). 
  Then $\IB\cong\PG(2,q^3)$, and this representation is called the {\em Bose representation} of $\PG(2,q^3)$ in $\PG(8,q)$. 
 The regular 2-spread $\mathbb S$ has three conjugate transversal planes which we denote throughout this article by $\Pit$, $\Pit^q$, $\Pit^{q^2}$. Note that $\IB\cong\Pit \cong\PG(2,q^3)$.

We use the following notation. 
    A point $\bar X$ in $\PG(2,q^3)$ has  Bose representation  the plane of $\mathbb S$ denoted by $\Bo{X}$. Further, $\bar X$ corresponds to a unique point of $\Pit$ denoted $X$,
 where   $\Bo{X}\star\cap\Pit=X$ and  $\Bo{X}\star=\langle  X, X^q,X^{q^2}\rangle$. 
 More generally, if $\bar\K$ is a set of points of $\PG(2,q^3)$, then $\lbose \K\rbose=\{\Bo{X}\st \bar X\in\bar \K\}$ denotes the corresponding set of planes in the Bose representation in $\PG(8,q)$, and $\K=\{\Bo{X}\star\cap\Pit\st \bar X\in\bar \K\}
 $ denotes the corresponding set of points of $\Pit$.

 So we have the following correspondences:
 $$
\begin{array}{ccccccccccc}
\PG(2,q^3)&\cong&\Pit&\cong&\IB\\
\bar X &\longleftrightarrow&
X
&\longleftrightarrow&
\Bo{X}=\langle  X, X^q,X^{q^2}\rangle\cap\PG(8,q).
\end{array}$$

\subsection{$\Fq$-substructures in the Bose representation}\Label{sec:scroll-plane}

We need to look at $\Fq$-sublines, $\Fq$-subplanes and  $\Fq$-conics of  $\PG(2,q^3)$ in the  Bose representation in $\PG(8,q)$. The Bose representation of $\Fq$-sublines and $\Fq$-subplanes is proved in    \cite{lunardon} and also in \cite[Theorem 2.6]{fieldreduction} using field reduction techniques.
The Bose representation of conics  and $\Fq$-conics of $\PG(2,q^3)$ are determined in \cite{BJW3}.
 Further, \cite{BJW3} looks at these structures in the extension of $\PG(8,q)$  to $\PG(8,q^3)$ and $\PG(8,q^6)$. We briefly summarise the results we need  in order to establish the notation we will use.

In $\PG(2,q^3)$, let $\bar \pi$ be an $\Fq$-subplane and $\bar b$ an $\Fq$-subline of the line $\bar \ell_b$. Define  
  ${\ocpi}$ and $\ocb$ as in (\ref{defcpi}) and (\ref{defcb}). Denote the corresponding maps which act on the points of 
   the transversal plane $\Pit$ and the line $\ell_b\subset\Pit$ 
  by ${\cpi}$ and $\cb$ respectively. Note that ${\cpi}$ and $\cb$  are \emph{not} collineations of $\PG(8,q^3)$, they only act on the points of   $\Pit$ and $\ell_b$ respectively. Further $\cpi$ and $\cb$ induce collineations acting on the points of $\Pit\blackstar$ and $\ellbstarstar$ respectively. 
  
In $\PG(8,q^6)$, we define the \emph{$\pi$-scroll-plane} of a point $X$ in $\Pit\blackstar$ to be  the plane  $$\lround X\rround_\pi= \big\langle\, X, (X^{{\mathsf c}^5_\pi})^q, (X^{\mathsf c^4_\pi})^{q^2}\,\big\rangle.$$
Note that if $X\in\pi$, then $\lround X\rround_\pi=\Bo{X}\blackstar$. Further, if $X\in\Pit\setminus\pi$, then 
$\lround X\rround_\pi$ simplifies to $\lround X\rround_\pi= \big\langle\, X, (X^{{\mathsf c}^2_\pi})^q, (X^{\cpi})^{q^2}\,\big\rangle$, and is disjoint from $\PG(8,q)$. 
The \emph{$b$-scroll-plane} of a point $X\in \ellbstarstar$ in $\PG(8,q^6)$ is the plane
$\lround X\rround_b= \big\langle\, X, (X^{{\mathsf c}^5_b})^q, (X^{\mathsf c^4_b})^{q^2}\,\big\rangle$ which lies in the 5-space $\langle \ell_b, \ell_b^q, \ell_b^{q^2}\rangle\blackstar$. 
 We quote the $\Fq$-conic result which we will need here. 
 
%

\begin{result}\cite{BJW3}\Label{conic-subplane-B}
    Let $\bar \C$ be an $\Fq$-conic in the $\Fq$-subplane $\bar\pi$ of $\PG(2,q^3)$. 
In $\PG(8,q)$, the planes of  $\Bo{\C}$ form a variety $\VC=\V^6_3$ which is the intersection of nine quadrics: $\VC=\Q_1\cap\cdots\cap\Q_9$. 
In $\PG(8,q^3)$, the points of the variety  $\VCstar=\Qonestar\cap\cdots\cap\Qninestar$ coincide with the points on the planes $  \{\lround X\rround_\pi\st X\in\Cplus\}$; and in $\PG(8,q^6)$,  the  points of the variety  $\VCstarstar=\Qonestarstar\cap\cdots\cap\Qninestarstar$  coincide with the points on the planes $ \{\lround X\rround_\pi\st X\in\Cplusplus\}$.
\end{result}

 \subsection{The Bruck-Bose representation inside the Bose representation}

 We can construct the Bruck-Bose representation of $\PG(2,q^3)$ by intersecting a 6-space with the Bose representation of $\PG(2,q^3)$ in $\PG(8,q)$ as follows. 
 Let $\Sigma_{6,q}$ be a 6-space of $\PG(8,q)$ that  contains a unique 5-space of the dual spread  $\mathbb H$, we denote this 5-space by  $\si$. The extension of $\Sigma_{6,q}$ to $\Sigma_{6,q}^\star$ meets the transversal plane $\Pit $ in a line $ g $. Further, $\langle  g , g ^q,g^{q^2}\rangle\cap\PG(8,q)=\si\cong\PG(5,q)$. The intersection of the Bose representation with $\Sigma_{6,q}$ gives the Bruck-Bose representation of $\PG(2,q^3)$ in the 6-space $\Sigma_{6,q}$. That is, $\IBB=\IB \cap\Sigma_{6,q}$. In particular the 2-spread $\S$ of the Bruck-Bose representation is contained in the 2-spread $\mathbb S$ of the Bose setting, that is $\S=\mathbb S\cap\si$. 
 
 So we have the following correspondences:
 $$
\begin{array}{ccccccccccc}
\PG(2,q^3)&\cong&\Pit&\cong&\IB&\cong&\IBB\\
\bar P &\longleftrightarrow&
P
&\longleftrightarrow&
\Bo{P}=\langle\, P,\ P^q,\ P^{q^2}\,\rangle\cap\PG(8,q)
&\longleftrightarrow&
[P]=\Bo{P}\cap\Sigma_{6,q}.
\end{array}$$

Note that when considering  the Bruck-Bose representation as $\IBB=\IB\cap\Sigma_{6,q}$,  we obtain a representation which is exact on $\si$, that is, we have the ``exact-at-infinity Bruck-Bose representation in $\PG(6,q)$'' described in Section~\ref{sec:exact}.

\subsection{Notation summary}\Label{sec-notn}

This article works with $\Fq$-conics in $\PG(2,q^3)$ in the planar setting, the $\PG(6,q)$ Bruck-Bose setting and the $\PG(8,q)$ Bose setting. We have designed the notation to help distinguish between these settings, and a summary of the notation used is given here.
 
 \begin{itemize}
 
  \item In $\PG(2,q^3)$
   \begin{itemize}
 \item[$\cdot$]  Objects in $\PG(2,q^3)$ are indicated with an overline $\bar{\phantom{x}}$.
 \item[$\cdot$] $\bar \C$ denotes an $\Fq$-conic in $\PG(2,q^3)$, and $\bar \Cplus$ denotes the unique $\Fqqq$-conic  in $\PG(2,q^3)$ containing $\bar \C$.
  \item[$\cdot$] For an $\Fq$-subplane $\bar \pi$ contained in $\PG(2,q^3)$, $\ocpi:\bar X\mapsto B \bar X^q$  (defined in (\ref{defcpi})) generates the unique collineation subgroup of order 3 acting on $\PG(2,q^3)$ which fixes $\bar \pi$ pointwise.
 \item[$\cdot$] For an $\Fq$-subline $\bar b$ contained in a line $\bar \ell_b$ of $\PG(2,q^3)$, ${\bar {\mathsf c}}_b:\bar X\mapsto D \bar X^q$ (defined in (\ref{defcb}))  generates the unique collineation subgroup of order 3 acting on $\bar \ell_b$, which fixes $\bar b$ pointwise.

\item[$\cdot$] We can extend $\PG(2,q^3)$ to $\PG(2,q^6)$, and let $\liplusplus$ denote the quadratic extension of $\li$, and $\barCplusplus$ denote the unique $\Fqqqqqq$-conic  containing $\bar \Cplus$.

  \end{itemize}
 
 \item In $\PG(n,q)$, $n>2$
   \begin{itemize}

 \item[$\cdot$] $\N_r$ denotes an $r$-dim nrc.
  \item[$\cdot$] If $\Q$ is a quadric in  $\PG(n,q)$,  denote the extension to $\PG(n,q^3)$ by $\Qstar$, and the extension to $\PG(n,q^6)$ by $\Q\blackstar$. 
   \item[$\cdot$] If $\N$ is a normal rational curve in  $\PG(n,q)$,  denote the extension to $\PG(n,q^3)$ by $\Nstar$, and the extension to $\PG(n,q^6)$ by $\N\blackstar$. 
  \item[$\cdot$] If $X=(x_0,\ldots,x_n)$, then $X^q=(x_0^q,\ldots,x_n^q)$. 
 \item[$\cdot$] $\mathsf e$ denotes the conjugate map associated with the square extension from $\PG(n,r)$ to $\PG(n,r^2)$, that is, $${\mathsf e}\colon X=(x_0,\ldots,x_n)\mapsto X^{\mathsf e}=(x_0^r,\ldots,x_n^r).$$
 \end{itemize}
 \item In $\IB$
 \begin{itemize}
 \item[$\cdot$]   $\mathbb S$ is a regular 2-spread in $\PG(8,q)$.
  \item[$\cdot$]   $\mathbb S$ has three transversal planes in $\PG(8,q^3)$, denoted $\Pit$, $\Pit^q$, $\Pit^{q^2}$. 
 \item[$\cdot$] 
 A point $\bar P$ in $\PG(2,q^3)$ corresponds to a point $P$ in the transversal plane $\Pit$, and to a plane $\Bo{P}$ of $\mathbb S$, where $\Bo{P}=\langle\, P,\, P^q,\, P^{q^2}\, \rangle\cap\PG(8,q)$.

 \end{itemize}
  \item In $\IBB$
 \begin{itemize}
\item[$\cdot$]   $\S$ is a regular 2-spread in the 5-space at infinity $\si\cong\PG(5,q)$. 
\item[$\cdot$]  $\S$ has transversal lines denoted 
 $g,g^{q},g^{q^2}$ which lie in $\sistar\setminus\si$.
\item[$\cdot$]   For a point $\bar X\in\PG(2,q^3)\setminus \li$, we denote corresponding point of $\PG(6,q)\setminus\si$ by  $[X]$.
\item[$\cdot$]   If $\bar X\in\li$, then we denote the corresponding point on $g$ by $X$ and the corresponding spread plane by $[X]$, so $[X]=\langle \, X, \,X^q, \,X^{q^2}\, \rangle \cap\si$ and $X=[X]\star\cap g$. 
  \end{itemize}
 \item  in $\IBB=\IB\cap\Sigma_{6,q}$
\begin{itemize}
\item[$\cdot$]  The 6-space $\Sigmaqstar$ meets the transversal plane $\Pit$ of $\mathbb S$ in the transversal line $g$ of $\S$.
\item[$\cdot$]  For a point $\bar X\in\PG(2,q^3)$, we have $[X]=\Bo{X}\cap\Sigma_{6,q}$.
\end{itemize}
\end{itemize}

 \section{Preliminaries}\Label{sec:Fqconic}

\subsection{$\Fq$-conics}

We define an $\Fqqq$-conic in $\PG(2,q^3)$ to be a non-degenerate conic of $\PG(2,q^3)$.  We define an $\Fq$-conic of $\PG(2,q^3)$ to be a non-degenerate conic of an $\Fq$-subplane of $\PG(2,q^3)$. That is, an $\Fq$-conic is projectively equivalent to a set of points in $\PG(2,q)$ that satisfy a non-degenerate homogeneous quadratic equation over $\Fq$.  

Let $\bar \C$ be an $\Fq$-conic in the $\Fq$-subplane $\bar \pi$,  let $\bar \Cplus$ be the unique $\Fqqq$-conic of $\PG(2,q^3)$ containing $\bar \C$, and let $\barCplusplus$ denote the 
unique $\Fqqqqqq$-conic in the quadratic extension $\PG(2,q^6)$ that contains $\bar\Cplus$. 
Let $\ocpi$ be defined as in (\ref{defcpi}), the following useful property is straightforward to verify. 

\begin{result}\Label{result-conj}
In $\PG(2,q^3)$, let $\bar \C$ be an $\Fq$-conic. If $\bar X\in \bar \Cplus$, then $\bar X^{\ocpi}, \bar X^{\ocpis}\in\bar \Cplus$. 
Further, in the quadratic extension $\PG(2,q^6)$, 
if $\bar Y\in\barCplusplus$, then $\bar Y^{\ocpii}\in\barCplusplus$, $i=1,\ldots,5$. 
\end{result}

The Bruck-Bose representation of $\PG(2,q^3)$ is constructed using a line at infinity $\li$. In order to study $\Fq$-conics in the Bruck-Bose representation, we need to look at the position of an $\Fq$-conic in relation to $\li$. 
There are a total of eleven different settings for  an $\Fq$-conic of $\PG(2,q^3)$ in relation to $\li$. These are described in the next result,  the proof of which is straightforward and is omitted.

\begin{result}\Label{Fqconic-cases} In $\PG(2,q^3)$, let $\li$ denote the line at infinity. Let $\bar \pi$ be an $\Fq$-subplane and let $\ocpi$ be the   
  collineation which fixes $\bar \pi$ pointwise as defined  in (\ref{defcpi}). 
 Let $\bar \C$ be an $\Fq$-conic in  $\bar\pi$,  let $\bar \Cplus$ be the unique $\Fqqq$-conic of $\PG(2,q^3)$ containing $\bar \C$, and in the extension to $\PG(2,q^6)$, let $\barCplusplus$ be the
unique $\Fqqqqqq$-conic of $\PG(2,q^6)$ that contains $\bar\Cplus$. Let $\liplusplus$ denote the extension of $\li$ to $\PG(2,q^6)$. 
 \begin{itemize}
  
\item Suppose {\sl $\bar \pi$ is secant to $\li$}, then either
\begin{enumerate}


\item[\SI] {\sl $\bar\Cplus$ is secant to $\li$:\ } $\bar \C\cap \li=\{\bar P,\bar Q\}$, $\bar P\neq\bar Q$,  $\bar P,\bar Q\in \bar \pi$.
\item[\SII] {\sl $\bar\Cplus$ is tangent to $\li$:\ } $\bar \C\cap \li=\{\bar P\}$,  $\bar P\in \bar \pi$.

\item[\SIII]  {\sl $\bar\Cplus$ is exterior to $\li$:\ }  $\bar \C\cap \li=\emptyset$, in which case $\bar \Cplus\cap \li=\emptyset$, and  $\barCplusplus\cap \liplusplus=\{\bar P,\bar Q\}\subset\liplusplus\setminus\li$, $\bar P\neq \bar Q$. Moreover,    $\bar Q=\bar P^{\ocpi}=\bar P^{q^3}$, $\bar P^{\ocpis}=\bar P$, and if $\bar b=\bar \pi\cap\li$, then $\bar P^{\ocb}=\bar P^{\ocpi}=\bar Q$. 

\end{enumerate}

\item Suppose {\sl $\bar \pi$ is tangent to $\li$}, with $\bar T=\bar \pi\cap \li$. Then either
\begin{enumerate}
\item[\TI]  {\sl $\bar\Cplus$ is secant to $\li$:\ } $\bar T\in\bar \C$, in which case $\bar \Cplus\cap \li=\{\bar T,\bar Q\}$ with $\bar T\neq \bar Q$. Note that  $\bar T=\bar T^{\ocpi}=\bar T^{\ocpi^2}$ and $\bar Q^{\ocpi},\bar Q^{\ocpi^2}\notin \li$.

\item[\TII]  {\sl $\bar\Cplus$ is secant to $\li$:\ } $\bar T\notin \bar \C$ and $\bar \Cplus\cap \li=\{\bar P,\bar Q\}$, $\bar P\neq\bar Q$. So $\bar P,\bar Q$ both have orbit size 3 under $\ocpi$, and $\bar P^{\ocpi},\bar P^{\ocpis},\bar Q^{\ocpi},\bar Q^{\ocpis}\notin \li$.

\item[\TIII]  {\sl $\bar\Cplus$ is tangent to $\li$:\ } $\bar T\notin \bar \C$ and $\bar \Cplus\cap \li=\{\bar P\}$. If $q$ is even, then $\bar T$ is the nucleus of $\bar \C$. The point $\bar P$ has orbit size 3 under $\ocpi$, and $\bar P^{\ocpi},\bar P^{\ocpis}\notin \li$.

\item[\TIV]  {\sl $\bar\Cplus$ is exterior to $\li$:\ } $\bar T\notin \bar \C$ and $\bar \Cplus\cap \li=\emptyset$, then $\barCplusplus\cap \liplusplus=\{\bar P,\bar Q\}\subset\liplusplus\setminus\li$, $\bar P\neq \bar Q$. In this case $\bar Q=\bar P^{q^3}=\bar P^{\ocpic}$, $\bar P$ has orbit size 6 under $\ocpi$, and $\bar P,\bar P^{\ocpic}\in \li$, 
$\bar P^{\ocpi},\bar P^{\ocpis},\bar P^{\ocpif},\bar P^{\ocpiff}\notin \li$. 
\end{enumerate}

\item Suppose {\sl $\bar \pi$ is exterior to $\li$}, denote the $(\bar \pi,\li)$-carriers which lie on $\li$ by   $\bar E,\bar E^{\ocpi}$. Then either
\begin{enumerate}
\item[\EI] {\sl $\bar\Cplus$ is secant to $\li$:\ }  $\bar \Cplus\cap \li=\{\bar E,\bar E^{\ocpi}\}$.

\item[\EII] {\sl $\bar\Cplus$ is secant to $\li$:\ } $\bar \Cplus\cap \li=\{\bar P,\bar Q\}$, with $\bar P\neq \bar Q$ and  $\{\bar P,\bar Q\}\cap\{ \bar E,\bar E^{\ocpi}\}=\emptyset$. In this case  $\bar P,\bar Q$ both have orbit size 3 under $\ocpi$, and $\bar P^{\ocpi},\bar P^{\ocpis},\bar Q^{\ocpi},\bar Q^{\ocpis}\notin \li$.

\item [\EIII] {\sl $\bar\Cplus$ is tangent to $\li$:\ }  $\bar \Cplus\cap\li=\{\bar P\}$. This case only occurs if $q$ is odd, in which case $\bar P\cap \{ \bar E,\bar E^{\ocpi}\}=\emptyset$ and  $\bar P$ has orbit size 3 under $\ocpi$, and $\bar P^{\ocpi},\bar P^{\ocpis}\notin \li$.

\item[\EIV] {\sl $\bar\Cplus$ is exterior to $\li$:\ } $\bar \Cplus\cap \li=\emptyset$, 
in which case $\barCplusplus\cap \liplusplus=\{\bar P,\bar Q\}\subset\liplusplus\setminus\li$, $\bar P\neq \bar Q$. Further  $\bar Q=\bar P^{q^3}=\bar P^{\ocpic}$, $\bar P$ has orbit size 6 under $\ocpi$, and $\bar P,\bar P^{\ocpic}\in \li$, 
$\bar P^{\ocpi},\bar P^{\ocpis},\bar P^{\ocpif},\bar P^{\ocpiff}\notin \li$.

\end{enumerate}

\end{itemize}
\end{result}

\emph{Remark} There 
 are $q^2+q+1$\, $\Fq$-conics of the type described in \EI, forming a circumscribed bundle of conics of $\overline\pi$.  
There are $q^2+q+1\,$ $\Fq$-conics of the type described in \EIII, forming an inscribed bundle of conics of $\overline\pi$.

%
%
%
%
%

\subsection{T-planes}

We begin with a lemma describing  how a $\pi$-scroll-plane (defined in Section~\ref{sec:scroll-plane}) meets a 5-space. 
In $\PG(8,q^3)$, let $\Pit,\Pit^q,\Pit^{q^2}$ be the transversal planes of the Bose spread $\mathbb S$. 
We use the following terminology in $\PG(8,q^3)$. 
\begin{itemize}
\item A \emph{T-point}  is a point which lies in one of the  transversal planes $\Pit,\Pit^q,\Pit^{q^2}$.
\item A \emph{T-line}  is a line which meets two of  the  transversal planes.
\item A \emph{T-plane} is a plane that meets all three of  the  transversal planes.  
\end{itemize}
Similarly, we can define T-points,  T-lines and T-planes in the quadratic extension $\PG(8,q^6)$ in terms of the extended transversal planes.

\begin{lemma}\Label{cpiplus} 
In the Bose setting, let $\Pit$ be a transversal plane of the Bose spread $\mathbb S$. Let $g$ be a line of $\Pit$,  let $\pi$ be an $\Fq$-subplane of  $\Pit$, and consider the 5-space  $\Pi_g=\langle g,g^q,g^{q^2}\rangle\cap\PG(8,q)$.
\begin{enumerate}
\item In $\PG(8,q^3)$, for a point $X\in\Pit$, $\lround X\rround_\pi\cap\Pigstar$ 
 is either
 $\emptyset$,
 a T-point, 
 a T-line, or
 a T-plane.
\item In $\PG(8,q^6)$, for a point $X\in\Pit\blackstar$,  $\lround X\rround_\pi\cap\Pigstarstar$ is either $\emptyset$, a T-point, a T-line, or a T-plane.
\end{enumerate}
\end{lemma}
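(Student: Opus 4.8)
The plan is to reduce both parts to a single assertion about how an arbitrary plane of $\PG(8,q^3)$ (resp.\ $\PG(8,q^6)$) meets the $5$-space $\Pigstar$, and then to verify that the $\pi$-scroll-plane $\lround X\rround_\pi$ always has the special incidence structure (``T-plane'', or a T-subspace of it) needed to force the intersection to be of T-type. First I would recall the two descriptions of $\lround X\rround_\pi$ from Section~\ref{sec:scroll-plane}: for $X\in\pi$ it equals $\Bo{X}\star=\langle X,X^q,X^{q^2}\rangle$, and for $X\in\Pit\setminus\pi$ it equals $\big\langle X,(X^{\cpis})^q,(X^{\cpi})^{q^2}\big\rangle$. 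In both cases the three spanning points lie one in each of $\Pit$, $\Pit^q$, $\Pit^{q^2}$ (using that $\cpi$ maps $\Pit$ to itself and that applying $q$ or $q^2$ to a point of $\Pit$ lands in $\Pit^q$ or $\Pit^{q^2}$). Hence $\lround X\rround_\pi$ is a genuine T-plane in the sense just defined, meeting each transversal plane in exactly one point. Likewise $\Pigstar=\langle g,g^q,g^{q^2}\rangle\cap\PG(8,q)$, whose extension is $\langle g^\star,(g^q)^\star,(g^{q^2})^\star\rangle$, is spanned by a line in each transversal plane.

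Next I would show that the intersection of two such ``T-structures'' is again of T-type, which is essentially linear algebra over $\Fqqq$ equipped with the semilinear map $\sigma:X\mapsto X^q$. The key point: both $\lround X\rround_\pi$ and $\Pigstar^\star$ are defined over $\Fq$ in the sense that they are fixed (as sets) by the collineation $X\mapsto X^q$ of $\PG(8,q^3)$ of order $3$ — indeed $\lround X\rround_\pi^{\,q}=\big\langle X^q,(X^{\cpis})^{q^2},X^{\cpi}\big\rangle$, and using $X^{\cpi\cpi\cpi}=X$ (as $X\in\Pit$, orbit size dividing $3$) one checks this equals $\lround X\rround_\pi$ again; and $\Pigstar^\star$ is the extension of an $\Fq$-space so is obviously $q$-fixed. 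Therefore the intersection $\lround X\rround_\pi\cap\Pigstar^\star$ is a subspace of $\PG(8,q^3)$ that is fixed setwise by $X\mapsto X^q$. A subspace of $\PG(8,q^3)$ fixed by this order-$3$ collineation, and contained in the T-plane $\lround X\rround_\pi$ (which meets each $\Pit^{q^i}$ in a single point $Y_i$ with $Y_i^q=Y_{i+1}$), must be spanned by a $q$-closed subset of $\{Y_0,Y_1,Y_2\}$ — so it is $\emptyset$, the single point-orbit giving a T-point, a pair giving a T-line meeting two transversals, or all three giving a T-plane. This is precisely the list in the statement. Part (2) is the identical argument with $q$ replaced throughout by the appropriate power and the $\Fqqqqqq$/$\PG(8,q^6)$ versions of the transversal planes; here one must be slightly careful that $X\in\Pit\blackstar$ may have orbit size $6$ under $\cpi$, so I would use the full formula $\lround X\rround_\pi=\big\langle X,(X^{\ocpiff})^q,(X^{\ocpif})^{q^2}\big\rangle$ and check T-plane-ness against the extended transversal planes $\Pit\blackstar,\Pit^q\blackstar,\Pit^{q^2}\blackstar$, which are still permuted cyclically by $q$.

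The main obstacle I anticipate is the bookkeeping that the three spanning points of $\lround X\rround_\pi$ really do land one in each transversal plane, together with the verification that $\lround X\rround_\pi$ is $q$-closed — this uses the interplay between the exponents of $\cpi$ and the Frobenius $q$, and the relation $\cpi^3=1$ on $\Pit$ (resp.\ $\cpi^6=1$ on $\Pit\blackstar$), which must be tracked carefully because $\cpi$ is only defined as a map on $\Pit$, not a collineation of the ambient space. Once that is pinned down, the reduction ``$q$-fixed subspace of a T-plane is a T-subspace'' is short. A secondary check is the degenerate possibility that $X$ itself lies on $g$ (so $X\in\pi$ and $\lround X\rround_\pi=\Bo{X}\star$ meets $\Pigstar^\star$ already in the point $X$ of $g$); this is subsumed in the T-point/T-line/T-plane trichotomy and needs no separate treatment, but I would remark on it to reassure the reader that the empty-intersection case is the generic one and the others do occur.
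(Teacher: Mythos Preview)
Your central claim --- that $\lround X\rround_\pi$ is fixed by the Frobenius collineation $P\mapsto P^q$ of $\PG(8,q^3)$ --- is false for $X\in\Pit\setminus\pi$. Your own computation gives $\lround X\rround_\pi^{\,q}=\langle X^{\cpi},\,X^q,\,(X^{\cpis})^{q^2}\rangle$, and since the transversal planes are pairwise skew, two T-planes coincide iff they have the same $\Pit$-point; here the $\Pit$-points are $X^{\cpi}$ and $X$, which differ precisely when $X\notin\pi$. In fact $\lround X\rround_\pi^{\,q}=\lround X^{\cpi}\rround_\pi$, an identity the paper exploits repeatedly later (e.g.\ in the proof of Theorem~\ref{conic-tgt}). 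The follow-up step is also broken: even in the case $X\in\pi$ where $\lround X\rround_\pi=\Bo{X}\star$ \emph{is} $q$-fixed and $Y_i^q=Y_{i+1}$, the only $q$-closed subsets of $\{Y_0,Y_1,Y_2\}$ are $\emptyset$ and the full set, so your argument would never output a T-point or a T-line; and more basically $\Bo{X}$ contains $q^2+q+1$ real $q$-fixed points, so ``$q$-fixed subspace $\Rightarrow$ spanned by T-points'' fails outright.

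Your instinct that both objects respect the transversal decomposition is the right one, and the argument is easily repaired by replacing $q$-invariance with the direct-sum structure. Since $\Pit,\Pit^q,\Pit^{q^2}$ are pairwise skew planes spanning $\PG(8,q^3)$, the underlying vector space splits as $V=V_0\oplus V_1\oplus V_2$; then $\Pigstar$ corresponds to $W_0\oplus W_1\oplus W_2$ with $W_i$ the $2$-space underlying $g^{q^i}$, and a point $a_0Y_0+a_1Y_1+a_2Y_2$ of the scroll plane lies in $\Pigstar$ iff each $a_iY_i\in W_i$, i.e.\ iff $a_i=0$ or $Y_i\in g^{q^i}$. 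Hence the intersection is exactly $\langle Y_i:Y_i\in g^{q^i}\rangle$, of T-type by construction. This is a more direct route than the paper's, which instead invokes \cite[Lemma~3.3]{BJW3} (two distinct T-planes meet only in $\emptyset$, a T-point, or a T-line) and argues by contradiction that any non-T-type point of $\lround X\rround_\pi\cap\Pigstar$ would also lie on a second T-plane contained in $\Pigstar$, violating that lemma.
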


\begin{proof}
Note that the lines $g,g^q,g^{q^2}$ are transversal lines of a regular 2-spread $\S$ of   $\Pi_g$, where $\S=\mathbb S\cap\Pi_g$. In $\Pigstar$: a T-point is a point of one of the transversal lines $g,g^q,g^{q^2}$; a T-line is 
 a line which meets two of $g,g^q,g^{q^2}$; and a T-plane is a plane that meets all three of $g,g^q,g^{q^2}$. Similar to the proof of  \cite[Cor 2.2]{BJW3}, we can show that two distinct  T-planes in $\Pigstar$ meet in either $\emptyset$, a T-point, or a T-line. 

%
Suppose there exists a $\pi$-scroll-plane where $\lround X\rround_\pi\cap\Pigstar$ is not one of: $\emptyset$, a T-point, a T-line or a T-plane. Then either  there is a point $L\in\lround X\rround_\pi\cap\Pigstar$ with $L$   on a T-line and  not a T-point; or there is a point $K\in\lround X\rround_\pi\cap\Pigstar$ with $K$  not on a T-line.
First suppose that $\lround X\rround_\pi$ is not contained in $\Pigstar$ and that  there is a point $L\in\lround X\rround_\pi\cap\Pigstar$ with $L$   on a T-line $m$, $L$ not a T-point. Let $\beta$ be one of the  T-planes of $\Pigstar$ containing  the line $m$. Then $\beta$ is a T-plane contained in $\Pigstar$, so  $\beta\neq \lround X\rround_\pi$ and $L\in\beta\cap \lround X\rround_\pi$. This contradicts  our statement that  that two distinct  T-planes in $\Pigstar$ meet in either $\emptyset$, a T-point, or a T-line. 
Now suppose 
 that  there is a point $K\in\lround X\rround_\pi\cap\Pigstar$ with $K$  not on a T-line.  Then as in the previous case, $K$ lies on a unique T-plane $\alpha\subset\Pigstar$. Note that $\alpha\neq \lround X\rround_\pi$ and $\alpha$ is also a T-plane. That is,  $K$ lies on two distinct T-planes, a contradiction. This completes the proof of part 1, the proof of part 2 is similar. 
\end{proof}

\subsection{A first description of $[\C]$}

We will determine the representation of an $\Fq$-conic of $\PG(2,q^3)$ in the \emph{exact-at-infinity}  Bruck-Bose representation  in $\PG(6,q)$. 
Fundamental to determining this structure is the interplay between the Bose representation $\IB$ in $\PG(8,q)$ and  the Bruck-Bose representation $\IBB$ in $\PG(6,q)$ considered as  $\IBB=\IB\cap\Sigma_{6,q}$.   Using Result~\ref{conic-subplane-B}  we first prove the following coarse description in the exact-at-infinity Bruck-Bose representation.
Recall that an \emph{affine point} of $\PG(6,q)$ means a point in $\PG(6,q)\setminus\si$.

\begin{theorem}\Label{fqconic-order6}
Let $\bar\C$ be an $\Fq$-conic in $\PG(2,q^3)$. In  the exact-at-infinity Bruck-Bose  representation in $\PG(6,q)$, the pointset of $[\C]$ forms a variety $\VCBB$ which is the intersection of nine quadrics. Further, $\VCBB$ is  generically a curve of degree $6$. 
\end{theorem}

\begin{proof} We look at the $\PG(6,q)$ Bruck-Bose representation of $\bar \C$ using the $\PG(8,q)$ Bose representation. 
Let $\bar \C$ be an $\Fq$-conic in  $\PG(2,q^3)$. In the Bose representation in  $\PG(8,q)$, $\Bo{\C}$ is a set of $q+1$ planes. By Result~\ref{conic-subplane-B}, the  pointset of  $\Bo{\C}$ forms a variety $\VC=\V^6_3$ of dimension 3 and degree 6 which is the intersection of nine quadrics $\VC=\Q_1\cap\cdots\cap\Q_9$.  
Let $\Sigma_{6,q}$ be a 6-space whose extension to $\PG(8,q^3)$ meets the transversal plane $\Pit$ in the line $g$.  So in the Bruck-Bose  setting $\IBB=\IB\cap\Sigma_{6,q}$, we have 
hyperplane at infinity $\si=\langle g,g^q,g^{q^2}\rangle\cap\PG(8,q)$. 
The $\Fq$-conic $\bar\C$ corresponds to a set of points $[\C]=\Bo{\C}\cap\Sigma_{6,q}$ in this Bruck-Bose setting. These points form a variety $\VCBB=\VC\cap\Sigma_{6,q}=\Q_1\cap\cdots\cap\Q_9\cap\Sigma_{6,q}$. 
The 6-space $\Sigma_{6,q}$ is a variety $\V^1_6$. Generically, we have $\V^6_3\cap\V^1_6=\V^6_1$, so  the variety $\VC=\V^6_3$ generically meets   $\Sigma_{6,q}$   in a curve of degree 6, that is, $\VCBB=\V^6_1$.
\end{proof}

\subsection{Transversals}

The next result   determines how the exact-at-infinity  Bruck-Bose representation of an $\Fq$-conic $\bar\C$ meets the transversal lines $g,g^q,g^{q^2}$ of the regular 2-spread $\S$.  Note that the variety   $\V([\C])\star$ 
 may well contain planes or lines in $\sistar$, and these may account for the intersections with  $g$ described in the next theorem. 

\begin{theorem}\Label{cplus-g}
Let $\S$ be a regular 2-spread in the hyperplane at infinity $\si$ of $\PG(6,q)$, with transversal lines denoted $g,g^q,g^{q^2}$.
 Let $\bar \C$ be an $\Fq$-conic  of $\PG(2,q^3)$, then in  the exact-at-infinity Bruck-Bose representation in $\PG(6,q)$,  $\V([\C])\star\cap g=\Cplus\cap g$.
  Hence $\bar P\in\bar\Cplus\cap\li$ if and only if $P\in \Cplus\cap g$. 
  \end{theorem}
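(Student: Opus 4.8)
The plan is to pass to the Bose model in $\PG(8,q^3)$, where $\V([\C])\star$ is cut out of the scroll variety $\VCstar$, and to reduce the claim to the elementary fact that each $\pi$-scroll plane meets the transversal plane $\Pit$ in exactly one point.

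First I would recall from Corollary~\ref{extend-C} that $\V([\C])\star=\VCstar\cap\Sigmaqstar$, and that by construction $\Sigmaqstar$ meets $\Pit$ in the line $g$, so $g\subseteq\Sigmaqstar$. Consequently $\V([\C])\star\cap g=\VCstar\cap g$, and it suffices to prove $\VCstar\cap g=\Cplus\cap g$, where $\Cplus\subseteq\Pit$ is the $\Fqqq$-conic corresponding to $\bar\Cplus$ and $g$ is a line of $\Pit$. By Result~\ref{conic-subplane-B} the pointset of $\VCstar$ is the union $\bigcup_{X\in\Cplus}\lround X\rround_\pi$ of the $\pi$-scroll planes, so $\VCstar\cap g=\bigcup_{X\in\Cplus}\big(\lround X\rround_\pi\cap g\big)$.

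The key step is to show that for every $X\in\Pit$ we have $\lround X\rround_\pi\cap\Pit=\{X\}$; since $g\subseteq\Pit$, this gives $\lround X\rround_\pi\cap g=\{X\}$ when $X\in g$ and $\emptyset$ otherwise, hence $\VCstar\cap g=\{\,X\in\Cplus\st X\in g\,\}=\Cplus\cap g$, which yields $\V([\C])\star\cap g=\Cplus\cap g$. For $X\in\pi$ this is immediate: $\lround X\rround_\pi=\Bo{X}\star$ and $\Bo{X}\star\cap\Pit=X$ by the Bose correspondence. For $X\in\Pit\setminus\pi$ one has $\lround X\rround_\pi=\langle X,(X^{\cpis})^q,(X^{\cpi})^{q^2}\rangle$; since $\cpi$ maps $\Pit$ to itself, the three spanning points of $\lround X\rround_\pi$ lie one each in $\Pit$, $\Pit^q$, $\Pit^{q^2}$, and because these three planes span $\PG(8,q^3)$ they decompose the underlying $9$-dimensional vector space as a direct sum, so the only point of $\lround X\rround_\pi$ lying in $\Pit$ is $X$; this is the same flavour of direct-sum argument used for T-planes in the proof of Lemma~\ref{cpiplus} and in \cite[Lemma~3.3]{BJW3}.

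Finally, the ``hence'' statement is just a translation through the point correspondences of Section~\ref{sec-back}: $\bar P\in\li$ corresponds to $P\in g$ under the bijection $\bar X\mapsto X=\Bo{X}\star\cap\Pit$, and $\bar P\in\bar\Cplus$ corresponds to $P\in\Cplus$ by the definition of $\Cplus$, so $\bar P\in\bar\Cplus\cap\li$ if and only if $P\in\Cplus\cap g$. The step I expect to need the most care is the identity $\lround X\rround_\pi\cap\Pit=\{X\}$ for $X\in\Pit\setminus\pi$; the only other point to watch is keeping straight which objects live in $\PG(8,q^3)$ and which lie inside $\Sigmaqstar$, and Corollary~\ref{extend-C} handles that for us.
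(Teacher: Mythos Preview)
Your proposal is correct and follows essentially the same route as the paper: reduce to $\VCstar\cap g$ via Corollary~\ref{extend-C}, then identify this with $\Cplus\cap g$ using Result~\ref{conic-subplane-B}. The only difference is that the paper quotes the identity $\VCstar\cap\Pit=\Cplus$ directly from Result~\ref{conic-subplane-B} (this is implicit in the cited \cite{BJW3} result) and then intersects with $g$, whereas you unpack that identity by hand via the scroll-plane description and the direct-sum argument $\lround X\rround_\pi\cap\Pit=\{X\}$; both reach the same conclusion by the same mechanism.
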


\begin{proof}  
Let $\bar \C$ be an $\Fq$-conic  of $\PG(2,q^3)$, in the Bose representation we have from Result~\ref{conic-subplane-B} that 
\begin{eqnarray}\label{eqn1}
\VCstar\cap\Pit=\Cplus.
\end{eqnarray}

We interpret this  in the Bruck-Bose representation. 
Let $g$ be a line of $\Pit$, and $\Sigma_{6,q}$ a 6-space whose extension contains the 5-space  $\langle g,g^q,g^{q^2}\rangle$.  
By the proof of Theorem~\ref{fqconic-order6}, 
 $\V([\C])\star= \VC \star \cap \Sigmaqstar$, and intersecting with $g$ yields  $\V([\C])\star\cap g= \VC \star \cap g$. 
Intersecting both sides of (\ref{eqn1})  with $g$, and equating  gives $\V([\C])\star\cap g=\Cplus\cap g$.
\end{proof}

The next result  looks how
  $\pi$-scroll-planes of $\VC$ meet the    5-space $\langle g,g^q,g^{q^2}\rangle$.

\begin{lemma}\Label{scroll-plane-C}
Let $\bar \C$ be an $\Fq$-conic in an $\Fq$-subplane $\bar \pi$ of $\PG(2,q^3)$. As defined in (\ref{defcpi}), let $\ocpi:\bar X\mapsto B\bar X^q$.  In the Bose representation $\PG(8,q)$, let  $\VC$ be the variety of $\PG(8,q)$ whose pointset corresponds to the pointset of $\Bo\C$. Let $g$ be a line of $\Pit$ and consider the 5-space $\Pi_g=\langle g,g^q,g^{q^2}\rangle\cap\PG(8,q)$.
\begin{enumerate}
\item In $\PG(8,q^3)$, 
$\VCstar\cap\Pigstar=\ \{\lround X\rround_\pi \cap \Pigstar\st X^{\mathsf c^i_\pi} \in \Cplus\cap g, \textup{ for some } i\in\{0,1,2\}\}$.

\item  In $\PG(8,q^6)$,  
$\VCstarstar\cap\Pigstarstar=\{\lround X\rround_\pi \cap \Pigstarstar\st X^{\mathsf c^i_\pi} \in \Cplusplus\cap\gstar,  \textup{ for some } i\in\{0,\ldots,5\}\}$.  
\end{enumerate}
\end{lemma}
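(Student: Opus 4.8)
The plan is to leverage Result~\ref{conic-subplane-B}, which gives us that in $\PG(8,q^3)$ the points of $\VCstar$ are exactly the points covered by the $\pi$-scroll-planes $\lround X\rround_\pi$ for $X\in\Cplus$, and similarly for $\VCstarstar$ in $\PG(8,q^6)$ with $X\in\Cplusplus$. Intersecting with the $5$-space $\Pigstar = \langle g,g^q,g^{q^2}\rangle\star\cap\PG(8,q^3)$, we immediately get
\[
\VCstar\cap\Pigstar=\bigcup_{X\in\Cplus}\big(\lround X\rround_\pi\cap\Pigstar\big),
\]
so the content of the lemma is to identify \emph{which} values of $X\in\Cplus$ actually contribute — namely exactly those $X$ with $X^{\mathsf c^i_\pi}\in\Cplus\cap g$ for some $i\in\{0,1,2\}$ — and to recognise that the contribution only depends on $X$ through this condition. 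First I would recall that by Result~\ref{result-conj} (or Result~\ref{Fqconic-cases}) the map $\cpi$ permutes $\Cplus$ in orbits of size $1$ or $3$, and that $g\subseteq\Pit$ while $g^q\subseteq\Pit^q$, $g^{q^2}\subseteq\Pit^{q^2}$. Writing $\lround X\rround_\pi=\langle X,(X^{\cpis})^q,(X^{\cpi})^{q^2}\rangle$ for $X\in\Pit\setminus\pi$ (and $=\Bo{X}\star$ for $X\in\pi$), the three vertices of a scroll-plane lie one in each transversal plane $\Pit,\Pit^q,\Pit^{q^2}$.

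The key step is to show that $\lround X\rround_\pi\cap\Pigstar\neq\emptyset$ if and only if one of the three vertices of $\lround X\rround_\pi$ lies on the corresponding transversal line $g$, $g^q$ or $g^{q^2}$, which is equivalent to $X\in g$, or $X^{\cpis}\in g$, or $X^{\cpi}\in g$ — i.e.\ to $X^{\mathsf c^i_\pi}\in g$ for some $i$, and then (since $X\in\Cplus$ forces $X^{\mathsf c^i_\pi}\in\Cplus$) equivalent to $X^{\mathsf c^i_\pi}\in\Cplus\cap g$ for some $i$. The forward direction is the substantive one: if $\lround X\rround_\pi$ meets $\Pigstar$, then by Lemma~\ref{cpiplus} the intersection is a T-point, T-line or T-plane of $\Pigstar$; in each case this intersection contains a T-point, which lies on one of $g,g^q,g^{q^2}$, say on $g$. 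Since a T-point lying on $g\subseteq\Pit$ is in particular a point of $\Pit$, and the only point of $\lround X\rround_\pi$ in $\Pit$ is its vertex $X$ (the other two vertices being in $\Pit^q$, $\Pit^{q^2}$, and a general point of the plane being a genuine combination of all three — this uses that $X\notin\pi$, and the case $X\in\pi$ is handled separately since then $\lround X\rround_\pi=\Bo{X}\star$ meets $\Pit$ exactly in $X$), we conclude $X\in g$, hence $X\in\Cplus\cap g$ with $i=0$. If instead the T-point lies on $g^q$ or $g^{q^2}$, the same argument applied to the vertex in $\Pit^q$ or $\Pit^{q^2}$ gives $X^{\cpis}\in g$ or $X^{\cpi}\in g$ respectively; note $(X^{\cpis})^q\in g^q$ is equivalent to $X^{\cpis}\in g$ since the Frobenius $q$-power maps $\Pit$ to $\Pit^q$ and $g$ to $g^q$. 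Conversely, if $X^{\mathsf c^i_\pi}\in g$ for some $i$, then the corresponding vertex of $\lround X\rround_\pi$ lies in $\Pigstar$, so the intersection is nonempty. The reverse inclusion of the set equality in the lemma then follows because if $X^{\mathsf c^i_\pi}\in\Cplus\cap g$, we may replace $X$ by $X^{\mathsf c^i_\pi}$ (which has the same scroll-plane up to the cyclic relabelling built into the definition of $\lround\cdot\rround_\pi$ on a $\cpi$-orbit, by Result~\ref{result-conj}) to see the term $\lround X\rround_\pi\cap\Pigstar$ is among those on the right-hand side.

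The main obstacle I anticipate is the careful bookkeeping of the three conjugacy indices: one must verify that $\lround X\rround_\pi$ genuinely depends only on the $\cpi$-orbit of $X$ in the sense needed (so that the index $i$ in the statement is well-defined and the two sides match as \emph{sets} of planes, not just as point-sets), and that $(X^{\cpis})^q\in g^q\iff X^{\cpis}\in g$ etc.\ — i.e.\ correctly tracking how Frobenius interacts with the transversal lines. A secondary technical point is justifying that the only point of a scroll-plane $\lround X\rround_\pi$ (for $X\in\Pit\setminus\pi$) lying in $\Pit$ is $X$ itself: this needs that $X,(X^{\cpis})^q,(X^{\cpi})^{q^2}$ are in general position and span a plane disjoint from $\PG(8,q)$ (stated in Section~\ref{sec:scroll-plane}), so that no nontrivial combination involving the $\Pit^q$- or $\Pit^{q^2}$-vertices can fall back into $\Pit$. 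Part~2 is proved identically, working in $\PG(8,q^6)$ with $\Cplusplus$, $\gstar$ and the extended transversal planes, the orbits now having size dividing $6$ and $i$ ranging over $\{0,\dots,5\}$.
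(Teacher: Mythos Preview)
Your proposal is correct and follows essentially the same route as the paper: both start from Result~\ref{conic-subplane-B} to write $\VCstar\cap\Pigstar$ as the union of $\lround X\rround_\pi\cap\Pigstar$ over $X\in\Cplus$, invoke Lemma~\ref{cpiplus} to see that any nonempty such intersection must contain one of the three vertices $X,(X^{\cpis})^q,(X^{\cpi})^{q^2}$, and then observe that a vertex lies in $\Pigstar$ precisely when the corresponding $\cpi$-conjugate of $X$ lies on $g$.

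Two minor remarks. First, your worry about the ``reverse inclusion'' and about $\lround X\rround_\pi$ depending only on the $\cpi$-orbit of $X$ is unnecessary: the scroll-planes $\lround X\rround_\pi$ and $\lround X^{\cpi}\rround_\pi$ are \emph{not} the same plane (they are Frobenius conjugates), but this is irrelevant --- the statement is simply characterising which $X\in\Cplus$ give a nonempty intersection, and once you have shown $\lround X\rround_\pi\cap\Pigstar\neq\emptyset\iff X^{\cpi^i}\in g$ for some $i$, the set equality follows immediately. Second, the justification that the only point of $\lround X\rround_\pi$ lying in $\Pit$ is the vertex $X$ needs no special argument about general position: since $\Pit,\Pit^q,\Pit^{q^2}$ are pairwise disjoint and span $\PG(8,q^3)$, any T-plane meets each of them in exactly one point.
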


\begin{proof} 
First consider the extension to $\PG(8,q^3)$. By Result~\ref{conic-subplane-B}, 
    the points of the variety  $\VCstar $ coincide with the points on the planes $  \{\lround X\rround_\pi\st X\in\Cplus\}$, so 
 \begin{eqnarray*}
 \VCstar\cap\Pigstar
 & =&
 \{\lround
X\rround_\pi\cap\Pigstar\st X\in\Cplus\}. 
\end{eqnarray*}
Recall that $\lround X\rround_\pi=\langle X,(X^{\cpi^2})^q, (X^{\cpi})^{q^2}\rangle,$
so we are looking for points $X\in\Cplus$ for which $\lround X\rround_\pi\cap\Pigstar\neq\emptyset$.
By Lemma~\ref{cpiplus}, $\lround
X\rround_\pi\cap\Pigstar$ is either 
\begin{itemize}
\item $\emptyset$, or
\item  $X$ or $(X^{\cpi^2})^q$ or $ (X^{\cpi})^{q^2}$, or 
\item a line joining two of $X,(X^{\cpi^2})^q, (X^{\cpi})^{q^2}$, or 
\item  $\lround X\rround_\pi$. 
\end{itemize}
Hence we want points $X\in\Cplus$ for which at least one of the points $X,(X^{\cpi^2})^q, (X^{\cpi})^{q^2}$ lies in $\Pigstar$.

As $\Cplus$ lies in the transversal plane $\Pit$ and $\Pigstar$ meets $\Pit$ in the line $g$, a point $X\in\Cplus$ lies in 
$\Pigstar$ if and only if  $X\in\Cplus\cap g$. For a point 
$X^{\cpi^2}\in\Cplus$, 
the point 
$(X^{\cpi^2})^q$ lies in the transversal plane $\Pit^{q}$. Further,  $\Pigstar$ meets $\Pit^{q}$ in the line $g^{q}$. 
Hence $(X^{\cpi^2})^q
\in(\Cplus)^q\cap\Pigstar$  if and only if 
 $X^{\cpi^2}\in\Cplus\cap g$.
Similarly, for a point $X^{\cpi}\in\Cplus$, we have $(X^{\cpi})^{q^2}\in(\Cplus)^{q^2}\cap\Pigstar$
  if and only if $X^{\cpi}\in\Cplus\cap g$. 
That is, for $X\in\Cplus$, $\lround X\rround_\pi\cap\Pigstar\neq\emptyset$ if and only if at least one of $X,X^{\cpi},X^{\cpis}$ lies in $\Cplus\cap g$, proving part 1.
The case 
 in 
$\PG(8,q^6)$ is similar. 
\end{proof}

\section{$\Fq$-conics  in  the exact-at-infinity Bruck-Bose representation}\Label{sec4}

Let $\bar \C$ be an $\Fq$-conic in an $\Fq$-plane $\bar \pi$ of $\PG(2,q^3)$. In this section 
we determine in more detail the structure of the  variety $\VCBB$ in the exact-at-infinity Bruck-Bose representation. We look at  the three cases where $\bar\pi$ is  secant, tangent and exterior to $\li$ separately.

\subsection{$\Fq$-conics in an $\Fq$-subplane secant to $\li$}\Label{sec-secant}

We begin by looking at $\Fq$-conics contained in an $\Fq$-subplane  that is secant to $\li$. 

Let $\bar\pi$ be an $\Fq$-subplane of $\PG(2,q^3)$ that is secant to $\li$. So $\bar b=\bar \pi\cap\li$ is an $\Fq$-subline. Recall the $b$-scroll-plane of a point 
was defined  in the Bose representation in Section~\ref{sec:scroll-plane}. 
As $\bar b$ is an $\Fq$-subline of $\li$, we can similarly define the $b$-scroll-plane  in the Bruck-Bose representation. In the Bruck-Bose setting,  the $b$-scroll-plane of a point $X\in g$ (or   $\gstar$)  is the plane     $\lround X\rround_b=\langle X, ({X}^{\mathsf c^5_b})^q, ({X}^{\mathsf c^4_b})^{q^2}\,\rangle$, which lies in the 5-space $\sistar$ (or $\sistarstar$). 
 This simplifies      to   $\lround X\rround_b=\Bo{X}\star$ for $X\in b$; and  $\lround X\rround_b=\langle X, ({X}^{\mathsf c^2_b})^q, ({X}^{\cb})^{q^2}\,\rangle$ for $X\in g\setminus b$.

\begin{theorem}\Label{conic-sec}
Let $\bar \C$ be an $\Fq$-conic in a secant $\Fq$-subplane $\bar \pi$ of $\PG(2,q^3)$.
In  the exact-at-infinity Bruck-Bose  representation in $\PG(6,q)$, the  curve $\VCBB$ decomposes into a non-degenerate conic $\N_2$  (which lies in a plane of $\PG(6,q)\setminus\si$ that meets $q+1$ spread planes), together with  two planes in $\si$ (possibly repeated, possibly in the extension $\sistarstar\setminus\si$).
\end{theorem}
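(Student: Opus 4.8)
The plan is to start from Theorem~\ref{fqconic-order6} / Corollary~\ref{extend-C}, which already tell us that $\VCBB$ decomposes into a $k$-dim nrc $\N_k$ ($2\le k\le 6$) together with linear subspaces at infinity whose dimensions sum to $6-k$, and this decomposition is preserved under extension to $\PG(8,q^3)$ and $\PG(8,q^6)$. The whole job is therefore to pin down $k=2$ and to identify the linear part. First I would invoke Result~\ref{BB-Baer-3}(2) and Result~\ref{exact-subplane}: since $\bar\pi$ is secant to $\li$, in the exact-at-infinity representation $[\pi]$ consists of the $2$-regulus $\R$ of $\S$ corresponding to $\bar b=\bar\pi\cap\li$ together with a plane $\alpha\subseteq\PG(6,q)\setminus\si$ meeting each plane of $\R$ in a point. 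Since $\bar\C\subseteq\bar\pi$, the affine points of $\VCBB$ all lie in $\alpha$; so the $k$-dim nrc $\N_k$, which contains $\ge q-1$ affine points no three collinear, lies in the plane $\alpha$, forcing $k=2$ and $\N_2$ a non-degenerate conic in $\alpha$. Because $\alpha$ is a plane, the linear part at infinity must then have dimensions summing to $6-k=4$; a priori this could be two planes, or a plane and two lines, etc., so a little more work is needed to show it is exactly two planes.

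The key to identifying the linear part is Lemma~\ref{scroll-plane-C} together with Theorem~\ref{cplus-g}. Working in the Bose model over $\PG(8,q^3)$, $\VCstar\cap\Pigstar$ is the union of the pieces $\lround X\rround_\pi\cap\Pigstar$ as $X$ ranges over those points of $\Cplus$ with $X^{\cpi^i}\in\Cplus\cap g$ for some $i\in\{0,1,2\}$. Since $\bar\pi$ is secant, $\bar b=\bar\pi\cap\li$ is a genuine $\Fq$-subline, its $2$-regulus in $\S$ has transversal lines $g,g^q,g^{q^2}$, and $\Cplus\cap g$ consists of the images on $g$ of $\bar\Cplus\cap\li$; by Result~\ref{Fqconic-cases}, in the secant case (cases [S1], [S2], [S3]) $\bar\Cplus\cap\li$ is either two points of $\bar\pi$, one point of $\bar\pi$, or a conjugate pair in $\liplusplus\setminus\li$ — in each subcase two points of $\Cplus$ on $g$ in the appropriate extension, each of which is a $\cb$-orbit point on $g$. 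For such a point $P\in\Cplus\cap g$ one checks that the three points $P,(P^{\cpi^2})^q,(P^{\cpi})^{q^2}$ spanning $\lround P\rround_\pi$ coincide with $P,(P^{\cb^2})^q,(P^{\cb})^{q^2}$ (because $\bar b$ is a line of $\bar\pi$, so $\ocpi$ restricted to $\bar\ell_b$ agrees with $\ocb$), i.e. $\lround P\rround_\pi=\lround P\rround_b$, which is a T-plane contained entirely in $\Pigstar=\sistar$. Running over the two points of $\Cplus\cap g$ and over the index $i$ just permutes these same two T-planes, so the part of $\VCstar$ lying in $\sistar$ is precisely two planes (counted once, or a single plane with multiplicity two in the tangent subcase [S2], or a conjugate pair in $\sistarstar\setminus\sistar$ in subcase [S3]); their dimensions sum to $4=6-k$, accounting for the entire linear component.

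Assembling: $\VCBB$ is the non-degenerate conic $\N_2$ in the plane $\alpha$ of $\PG(6,q)\setminus\si$ (and $\alpha$ meets $q+1$ spread planes, namely the $q+1$ planes of the $2$-regulus $\R$, by Result~\ref{exact-subplane}), together with the two planes in $\si$ described above — repeated in case [S2], and lying in $\sistarstar\setminus\si$ as a conjugate pair in case [S3]. I would organise the write-up by first using Result~\ref{exact-subplane} to force $k=2$ and locate $\N_2$, then using Lemma~\ref{scroll-plane-C} and Result~\ref{Fqconic-cases} to identify the two planes at infinity and check the dimension count. The main obstacle I anticipate is the bookkeeping in the last step: verifying that for $P\in\Cplus\cap g$ the scroll-plane $\lround P\rround_\pi$ really is the $b$-scroll-plane $\lround P\rround_b\subseteq\sistar$ (using $\ocpi|_{\bar\ell_b}=\ocb$), and carefully tracking across the three subcases [S1], [S2], [S3] whether the two planes are distinct and $\Fq$-rational, coincident, or a conjugate pair over $\Fqqqqqq$ — this is exactly where Result~\ref{Fqconic-cases}'s case analysis of $\bar\Cplus\cap\li$ must be married to the scroll-plane description, and where the ``possibly repeated, possibly in the extension'' clauses in the statement come from.
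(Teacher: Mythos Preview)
Your proposal is correct and follows essentially the same route as the paper's proof: use Result~\ref{exact-subplane} to locate the affine part of $[\C]$ inside the plane $\alpha$ and hence force $k=2$, then use the scroll-plane machinery (Lemma~\ref{scroll-plane-C} together with the identification $\lround P\rround_\pi=\lround P\rround_b$ coming from $\ocpi|_{\bar\ell_b}=\ocb$) to show the linear component at infinity is exactly two planes. The only organisational difference is that the paper handles \SI\ and \SII\ more directly---since $\bar P,\bar Q\in\bar\C\cap\li$ the exact-at-infinity set $[\C]$ visibly contains the spread planes $[P],[Q]$, so no scroll-plane computation is needed there---and reserves the scroll-plane argument for \SIII; your uniform treatment via Lemma~\ref{scroll-plane-C} works equally well (for $P\in b\subset\pi$ one has $\lround P\rround_\pi=\Bo{P}\star=[P]\star$ anyway).
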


\begin{proof} Let $\bar \C$ be an $\Fq$-conic in a secant $\Fq$-subplane $\bar \pi$ of $\PG(2,q^3)$.
By  Result~\ref{exact-subplane}, in the exact-at-infinity Bruck-Bose representation in $\PG(6,q)$: 
 the affine points of $[\pi]$ are the affine points of a plane $\alpha_\pi$; and the points of $[\pi]$ in $\si$ are the points of 
the $2$-regulus of $\S$ which $\alpha_\pi$ meets. 
Moreover   $\bar \pi$ and $\alpha_\pi$ are in 1-1 correspondence. So corresponding to $\bar \C$ is a non-degenerate conic $\N_2$ in $\alpha_\pi$.  
So the variety $\VCBB$ contains the conic $\N_2$. 
We show that   the variety $\VCBB$  reduces into $\N_2$ and  two planes contained in $\si$ (or an extension).
There are three cases to consider, depending on how $\bar\C$  meets $\li$. We label the cases to be consistent with Result~\ref{Fqconic-cases}.

Cases \SI and \SII. Suppose $\bar\C\cap\li=\{\bar P,\bar Q\}$, possibly $\bar P=\bar Q$.
In  the exact-at-infinity Bruck-Bose  setting, 
 $[\C]$ contains 
 the spread planes 
 $[P]$, $[Q]$. Moreover, in this case $\N_2$ 
  meets $\si$  in the real  points $[P]\cap\alpha_\pi$ and $[Q]\cap\alpha_\pi$ (possibly repeated).

Case \SIII. Suppose $\bar\C\cap\li=\emptyset$, so by Result~\ref{Fqconic-cases},  $\barCplusplus\cap \liplusplus=\{\bar P,\bar Q\}\subset\liplusplus\setminus\li$, $\bar P\neq\bar Q$. 
   To determine the  component of $\VCBB$ at infinity, we work in the Bose representation. In $\PG(8,q^3)$: $\li$ corresponds to a line of $\Pit$ denoted by $g$; $\pi$ is an $\Fq$-subplane of $\Pit$ that meets $g$ in an $\Fq$-subline; $\C$ is an $\Fq$-conic in  $\pi$; $\Cplus\cap g=\emptyset$; and  $\Cplusplus\cap\gstar=\{P,Q\}\subset\gstar\setminus g$.
Let $b=\pi\cap g$, and let $\cpi$, $\cb$  be as defined in 
   (\ref{defcpi}) and (\ref{defcb}), then by Result~\ref{Fqconic-cases}, $Q=P^{\cpi}=P^{\cb}$ and $P^{\cb^2}=P$.
    
    We now look at the  Bruck-Bose setting,   let $\Sigma_{6,q}$ be  a 6-space of $\PG(8,q)$  that contains the 5-space $\si=\langle g,g^q,g^{q^2}\rangle\cap\PG(8,q)$, and use the setting  $\IBB=\IB\cap\Sigma_{6,q}$. 
 We wish to find $\VCBBstarstar\cap\sistarstar$. By Lemma~\ref{scroll-plane-C}, this intersection is contained in  the  $\pi$-scroll-planes 
 $\lround X\rround_\pi$ of $\PG(8,q^6)$ for which $X^{\cpi^i}\in \Cplusplus\cap\gstar$.
 That is, the intersection is contained in the two 
  $\pi$-scroll-planes  
 $\lround P\rround_\pi$ and $\lround Q\rround_\pi$. We look at how these two $\pi$-scroll-planes 
 meet $\sistarstar$.
 As $P,P^{\cb}\in \gstar$ and  $P^{\mathsf c^2_b}=P$, we have $\lround P\rround_\pi=\lround P\rround_b$ and $\lround Q\rround _\pi=\lround Q\rround_b$.
So $\lround P\rround_b=\langle\, P, (P^{\mathsf c^5_b})^q, (P^{\cb^4})^{q^2}\,\big\rangle
=\langle P,Q^q,P^{q^2}\rangle$, which   lies in the 5-space  $\sistarstar=\langle \gstar,\gqstar,\gqqstar\rangle$. Hence 
$\lround P\rround_b$, and similarly $\lround Q\rround_b$, lies in $\VCBBstarstar\cap
\sistarstar$. 
That is, the variety  $\VCBB$ decomposes into $\N_2$ and two  planes at infinity, namely $\lround P\rround_b,\ \lround Q\rround_b$.
\end{proof}
 
We give a complete description of the Bruck-Bose representation of an $\Fq$-conic in a secant $\Fq$-subplane and how the variety meets $\si$. 

\begin{corollary}\Label{cor:sec}
 Let $\bar \C$ be an $\Fq$-conic in a secant $\Fq$-subplane $\bar \pi$ of $\PG(2,q^3)$, and suppose $\bar \Cplus$ meets $\li$ in the two points $\{\bar P,\bar Q\}$ (possibly repeating or in an extension). In  the exact-at-infinity Bruck-Bose representation in $\PG(6,q)$, $\V([\C])$ decomposes into a non-degenerate conic $\N_2$  (which lies in a plane of $\PG(6,q)\setminus\si$ that meets $q+1$ spread planes), together with  two planes  at infinity. Further,  we have the following. \begin{enumerate}
\item  If $\bar\C\cap\li=\{\bar P,\bar Q\}$, possibly $\bar P=\bar Q$, then $\V([\C])$ contains the two spread planes  $[P], [Q]$. Moreover, $\N_2\cap\si$ is two real points, one in $[P]$ and one in $[Q]$.
\item   If $\bar\C\cap\li=\emptyset$, then $ \bar\Cplus\cap\li=\emptyset$ and $\barCplusplus\cap \liplusplus=\{\bar P,\bar Q\}\subset\liplusplus\setminus\li$, $\bar P\neq\bar Q$. 
The variety-extension $\VCBBstarstar$ contains the two planes $\lround P\rround_{b}$ and $\lround Q\rround_{b}$ of $\sistarstar$, where $\bar b=\bar \pi\cap\li$. Moreover,  $\Ntwostarstar\cap\sistarstar$ is two  points, $K,K^{q}\in\sistarstar\setminus\sistar$, with $K$  not on a T-line. 
\end{enumerate}
\end{corollary}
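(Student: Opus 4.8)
The plan is to prove Corollary~\ref{cor:sec} by combining Theorem~\ref{conic-sec}, which already establishes the coarse decomposition into $\N_2$ plus two planes at infinity and identifies those planes in each case, with the transversal intersection data from Theorem~\ref{cplus-g} and the $b$-scroll-plane computation. Both parts of the corollary amount to extracting finer information from the proof of Theorem~\ref{conic-sec}, so most of the work is bookkeeping about which real or extended points of $\si$ lie on the conic $\N_2$.

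For part~1, I would first invoke Theorem~\ref{conic-sec} to get that $\V([\C])$ consists of $\N_2$ in a plane $\alpha$ of $\PG(6,q)\setminus\si$ together with two planes in $\si$; the case analysis in that proof (Cases \SI, \SII) shows these two planes are the spread planes $[P]$ and $[Q]$. Then, since $\bar P, \bar Q\in\bar\C\cap\li$ and $\bar\C$ is an $\Fq$-conic, its affine part has $q-1$ points (Case \SI) or $q$ points (Case \SII), and the two points of $\N_2$ lying on $\si$ must be the images of $\bar P$ and $\bar Q$. By Result~\ref{BB-Baer-3}(1) applied to the two $\Fq$-sublines of $\bar\pi$ through $\bar P$ and through $\bar Q$ that are secant to $\li$ at those points (or directly from Result~\ref{exact-subplane} describing $[\pi]=\alpha\cup\R$, where $\alpha$ meets each plane of $\R$ in a real point), these intersection points $\alpha\cap[P]$ and $\alpha\cap[Q]$ are \emph{real} points of $\si$, one in $[P]$ and one in $[Q]$. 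This gives $\N_2\cap\si=\{\alpha\cap[P],\ \alpha\cap[Q]\}$, two real points as claimed; when $\bar P=\bar Q$ (Case \SII) these coincide into a single real point of multiplicity two on the conic, consistent with $\bar\Cplus$ being tangent to $\li$.

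For part~2, Theorem~\ref{conic-sec} (Case \SIII) already shows $\VCBBstarstar$ contains the two $b$-scroll planes $\lround P\rround_b=\langle P,Q^q,P^{q^2}\rangle$ and $\lround Q\rround_b=\langle Q,P^q,Q^{q^2}\rangle$ of $\sistarstar$, where $\bar P^{\ocb}=\bar Q$, $\bar P^{\ocb^2}=\bar P$; so it remains to identify $\Ntwostarstar\cap\sistarstar$. Since $\bar\C\cap\li=\emptyset$, the conic $\N_2\subset\alpha$ has all $q+1$ of its points affine, but its quadratic extension $\Ntwostarstar$ meets the extended hyperplane $\sistarstar$ in two points. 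Because $\bar\Cplus$ is exterior to $\li$ but $\barCplusplus$ meets $\liplusplus$ at the conjugate pair $\{\bar P,\bar Q\}$ with $\bar Q=\bar P^{q^3}$, the two points of $\Ntwostarstar\cap\sistarstar$ must be a conjugate pair $K,K^q$ under the order-$6$ extension map, with $K\in\sistarstar\setminus\sistar$. To see $K$ is not on a T-line: a T-line of $\sistarstar$ is one meeting two of $\gstar,\gqstar,\gqqstar$, and a point on such a line would force a non-generic intersection of $\Ntwostarstar$ with the transversal structure corresponding to $\bar\Cplus$ actually meeting $\li$ over $\Fq$ or $\Fqqq$, contradicting Case \SIII; more precisely, by Theorem~\ref{cplus-g} (extended to $\PG(6,q^6)$) we have $\V([\C])\blackstar\cap\gstar=\Cplusplus\cap\gstar$, and the hypothesis $\bar P,\bar Q\notin\li$ over $\Fqqq$ means $K$ does not lie on $g$ or its conjugates, and the further exclusion of T-lines follows from the position of $\lround P\rround_b,\lround Q\rround_b$ relative to $K$ exactly as in Lemma~\ref{cpiplus}.

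The main obstacle is the final assertion in part~2 that $K$ does \emph{not} lie on a T-line. Everything up to the existence of a conjugate pair $K,K^q$ in $\sistarstar\setminus\sistar$ follows fairly mechanically from Theorem~\ref{conic-sec} and the conjugacy structure of Case \SIII; but pinning down that $K$ avoids all three transversal lines \emph{and} every line joining two of them requires carefully tracking the coordinates of the point $\Ntwostarstar\cap\lround P\rround_b$ inside the plane $\langle P,Q^q,P^{q^2}\rangle$ and verifying it is not on $\langle P,P^{q^2}\rangle$, $\langle P,Q^q\rangle$, or $\langle Q^q,P^{q^2}\rangle$ — equivalently, that its three ``coordinates'' with respect to the three transversal-line directions are all nonzero. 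I would handle this by writing $\bar\Cplus$ in normal form in the $\Fq$-subplane $\bar\pi$ with $\li:\ x_2=0$ not meeting $\bar\C$, pushing through the $\cb$-action, and reading off that the unique point of $\Ntwostarstar$ on $\lround P\rround_b$ has all nonzero transversal components precisely because $\bar\Cplus$ is exterior (not tangent) to $\li$ — the even case $q$ even, where $\bar\Cplus$ could a priori be tangent, does not arise here since $\bar\Cplus\cap\li=\emptyset$.
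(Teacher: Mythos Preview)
Your proposal tracks the paper's proof closely for part~1 and for the first half of part~2: both derive these directly from the case analysis inside Theorem~\ref{conic-sec}, and your identification of the conjugate pair $K,K^q\in\sistarstar\setminus\sistar$ is correct.

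The point of divergence is the final assertion that $K$ is not on a T-line. You propose a coordinate computation---put $\bar\Cplus$ in normal form, push through the $\cb$-action, and check that the three transversal components of $K$ are all nonzero. That would work, but the paper avoids coordinates entirely via a short geometric argument you are missing. The key observation is that $m=\alpha\cap\si$ is a line of $\PG(5,q)$ (not merely of $\PG(5,q^3)$), so the $q$-power Frobenius fixes $m$ and permutes $\{K,L\}$; combined with $L=K^{q^3}$ this forces $K^{q^2}=K$. Now if $K$ lay on a T-line $\ell=XY^q$ with $X,Y\in\gstar$, then $K=K^{q^2}\in\ell^{q^2}=X^{q^2}Y^{q^3}$, a line meeting $\gqqstar$ and $\gstar$. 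Hence the plane $\langle K,\gstar\rangle$ contains both $\ell$ and $\ell^{q^2}$, so meets $\gqstar$ and $\gqqstar$ as well---impossible since $g,g^q,g^{q^2}$ span a 5-space.

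Your appeal to Theorem~\ref{cplus-g} and Lemma~\ref{cpiplus} for the T-line exclusion is not quite right: Theorem~\ref{cplus-g} only controls intersections with the transversal lines themselves (ruling out T-points, not T-lines), and Lemma~\ref{cpiplus} concerns how scroll planes meet $\Pigstar$, not how individual points of $\N_2$ sit relative to T-lines. So that first line of attack does not close the gap, and the fallback coordinate computation, while viable, is heavier than needed once you notice $K^{q^2}=K$.
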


\begin{proof} The proof of Theorem~\ref{conic-sec} verifies part 1 and the first statement of part 2. To prove the second statement of part 2,  
consider the non-degenerate conic $\N_2$ in $\VCBB$ and look at $\N_2\cap\si$. Recall $\alpha$ is the plane containing $\N_2$, and as $\bar\C\cap\li=\emptyset$, the line $m=\alpha\cap\si$ is exterior to $\N_2$. Hence  $\N_2$ meets $\si$ in two points $K,L$ lying in the  extension $\sistarstar\setminus\sistar$, Further, 
 $K=m\blackstar\cap\lround P\rround_b$, $L=m\blackstar\cap\lround Q\rround_b$, and 
 the points $K,L$ are conjugate with respect to the quadratic extension  of $\PG(5,q^3)$ to $\PG(5,q^6)$, that is, $L=K^{q^3}$. 
 Moreover, as $m^q=m$, $K^q\in m$ and $L=K^{q^3}=K^q$.

  Suppose $K$ lies on a T-line $\ell$, and without  loss of generality, suppose $\ell=XY^q$ for some $X,Y\in \gstar$.
  So $K^{q^2}=K\in \ell^{q^2}=X^{q^2}Y^{q^3}$. As $Y^{q^3}\in \gstar$, the plane $\langle K,g\rangle$ contains  $\ell$ and $\ell^{q^2}$, so meets $\gqstar$ and $\gqqstar$, contradicting $g,g^q,g^{q^2}$ spanning 5-space. 
  \end{proof}

\subsection{$\Fq$-conics in  an $\Fq$-subplane tangent to $\li$}

Next we look at $\Fq$-conics contained in an $\Fq$-subplane  that is tangent to $\li$.  

\begin{theorem}\Label{conic-tgt}
Let $\bar \C$ be an $\Fq$-conic in a tangent $\Fq$-subplane $\bar \pi$ of $\PG(2,q^3)$. Then  in the exact-at-infinity Bruck-Bose representation in $\PG(6,q)$, the points of   $[\C]$ form either a 6-dim nrc $\N_6$, or a 4-dim nrc $\N_4$ and a spread plane.
\end{theorem}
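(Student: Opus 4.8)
The plan is to combine the general structure result Theorem~\ref{fqconic-order6} (and its extension Corollary~\ref{extend-C}) with the explicit description of the four tangent cases \TI--\TIV in Result~\ref{Fqconic-cases}, and then read off the linear component at infinity using the transversal intersection result Theorem~\ref{cplus-g} together with Lemma~\ref{scroll-plane-C}. By Theorem~\ref{fqconic-order6} we already know $\VCBB$ decomposes into a $k$-dim nrc $\N_k$ (with $2\le k\le 6$) plus linear subspaces in $\si$ (or an extension) of total dimension $6-k$. So the entire task is to show that in the tangent-subplane case the only possibilities are $k=6$ with no linear component, or $k=4$ with a single spread plane as the linear component.

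First I would dispose of the upper bound. Since $\bar\pi$ is tangent to $\li$, the $\Fq$-subline $\bar\pi\cap\li$ is a single point $\bar T$, and in the Bruck-Bose picture the subplane $\bar\pi$ corresponds (by Result~\ref{BB-Baer-3}(2), or rather the tangent analogue used implicitly) to a plane of $\PG(6,q)\setminus\si$ meeting the spread plane $[T]$ — this forces the affine part of $\bar\C$ to span at least a $4$-space, hence $k\ge 4$. More carefully, I would argue in the Bose setting: $\Bo{\C}$ is a set of $q+1$ planes spanning $\PG(8,q)$, and by Lemma~\ref{cpiplus} each meets $\si=\langle g,g^q,g^{q^2}\rangle\cap\PG(8,q)$ in $\emptyset$, a T-point, a T-line, or a T-plane; since $\bar\pi$ is tangent, $\bar b=\bar\pi\cap\li$ is a single point $\bar T$, so at most one of the conjugates of $\bar T$-type scroll planes can be a T-plane. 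Counting dimensions as in the proof of Theorem~\ref{fqconic-order6}, the linear part at infinity has dimension $a+2b=6-k$ with $b\le 1$ and, when $b=1$, $a=0$; and I would show $b=1,a=0$ gives $k=4$ while $a=b=0$ gives $k=6$, and that no intermediate value ($k=5$, i.e. a single line at infinity, or $k=4$ with two lines, or $k=3$, etc.) can occur. Ruling out $k=5$ and $k=3$ is where the case analysis \TI--\TIV and Theorem~\ref{cplus-g} come in: Theorem~\ref{cplus-g} says $\V([\C])\star\cap g=\Cplus\cap g$, and in each of \TI--\TIV the set $\bar\Cplus\cap\li$ is a prescribed set (two distinct points, one point, or two conjugate points in the sextic extension), which pins down exactly how $\N_k$ and its extension meet the transversal line $g$, and hence constrains the linear component.

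Concretely, in case \TI the point $\bar T\in\bar\C$ and $\bar\Cplus\cap\li=\{\bar T,\bar Q\}$ with $\bar Q$ of orbit size $3$: here the spread plane $[T]$ meets the conic, but $\bar Q$ and its conjugates are affine, so $\bar\C$ itself already contains an affine point whose $\pi$-scroll plane (in the Bose picture) contributes a genuine plane-at-infinity — this is the $\N_4$-plus-spread-plane case. In cases \TII and \TIII, $\bar T\notin\bar\C$ and $\bar\Cplus\cap\li$ consists of one or two points all of orbit size $3$ (none equal to $\bar T$); then none of the relevant scroll planes is a T-plane, the linear component at infinity is empty, and $k=6$. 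Case \TIV ($\bar\Cplus$ exterior to $\li$) is analogous to Case \SIII in Theorem~\ref{conic-sec}: $\barCplusplus\cap\liplusplus=\{\bar P,\bar Q\}$ in the sextic extension with $\bar P$ of orbit size $6$ but $\bar P,\bar P^{\ocpic}\in\li$ — I expect this again to give $k=6$ (the two extra intersection points lie in $\sistarstar$ but are absorbed into the nrc, not a linear component, because the relevant scroll planes $\lround P\rround_\pi,\lround Q\rround_\pi$ do not lie in $\sistarstar$, unlike the secant case where $\lround P\rround_b$ did). In each subcase I would verify the claimed intersection with $[T]$ (a single point for $\N_6$, or the plane $[T]$ contained in $\VCBB$ for $\N_4$) by the same scroll-plane computation as in the proof of Theorem~\ref{conic-sec}, using that for $X$ associated to $\bar T$ the $\pi$-scroll plane simplifies.

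The main obstacle I anticipate is ruling out the ``in-between'' degenerations — showing that the linear component at infinity, when nonempty, is exactly one spread plane (forcing $k=4$) and never, say, two disjoint lines, a line and a point's worth of extra degree, or a single line with $k=5$. The clean way to handle this is the Bose/scroll-plane dictionary: by Lemma~\ref{scroll-plane-C} the intersection $\VCstar\cap\Pigstar$ is governed entirely by which of $X,X^{\cpi},X^{\cpis}$ lie in $\Cplus\cap g$, and when $\bar\pi$ is tangent the subline $\bar b=\bar\pi\cap\li$ is a single point, so there is essentially ``one slot'' for a T-plane to appear; combined with Lemma~\ref{cpiplus} (each scroll plane meets $\Pigstar$ in $\emptyset$, a point, a line or the whole plane) and the fact that $[\C]$ spans $\Sigma_{6,q}$, this should force the dichotomy. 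I would also lean on Corollary~\ref{extend-C} so that the argument is stable under extension to $\PG(6,q^3)$ and $\PG(6,q^6)$, which is needed because in case \TIV the relevant points only become visible over $\Fqqqqqq$.
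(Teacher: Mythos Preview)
Your overall strategy---Bose setting, case split on \TI--\TIV, scroll-plane computation via Lemma~\ref{scroll-plane-C} and Lemma~\ref{cpiplus}, then Theorem~\ref{fqconic-order6}---is exactly the paper's approach. Two points, however, need attention.

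First, your handling of \TIII\ has a genuine gap. You group \TII\ and \TIII\ together and assert that ``the linear component at infinity is empty, and $k=6$''. In \TII\ that is fine: six distinct points on the transversal conjugates force $k=6$. But in \TIII\ the scroll-plane computation yields only \emph{three} points $P,P^q,P^{q^2}$ in $\VCBBstar\cap\sistar$, and Theorem~\ref{fqconic-order6} does not by itself exclude $k=3$ (with the remaining degree sitting in linear pieces over a further extension, just as in case \SIII\ of the secant proof where the planes at infinity lived in $\sistarstar\setminus\sistar$). The paper closes this by a separate geometric observation: if $k=3$ then $\N_3$ lies in a $3$-space $\Sigma$ whose extension contains the spread plane $\langle P,P^q,P^{q^2}\rangle$, so $\Sigma$ corresponds via Bruck--Bose to a \emph{line} of $\PG(2,q^3)$ containing $q$ affine points of $\bar\C$, which is impossible. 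You need this (or an equivalent argument, e.g.\ pushing the scroll-plane computation all the way to $\sistarstar$ and checking no lines appear there either).

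Second, two smaller confusions. In \TI\ you write that ``$\bar\C$ itself already contains an affine point whose $\pi$-scroll plane \ldots\ contributes a genuine plane-at-infinity''---it is the \emph{non-affine} point $\bar T\in\bar\pi$ whose scroll plane $\lround T\rround_\pi=[T]\star$ lies entirely in $\sistar$ (because $T\in\pi$), giving the spread plane; the scroll planes through $Q,Q^{\cpi},Q^{\cpis}$ each contribute a single T-point. And your opening attempt to get $k\ge 4$ via ``the tangent analogue'' of Result~\ref{BB-Baer-3}(2) is shaky: a tangent $\Fq$-subplane is \emph{not} represented by a plane in $\PG(6,q)$ (it is a ruled cubic surface), so that heuristic does not work as stated. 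The paper never argues a lower bound for $k$ in advance; it simply computes $\VCBBstar\cap\sistar$ in each case and reads $k$ off directly. Drop that paragraph and your argument will line up with the paper's.
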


\begin{proof} 
Let $\bar \C$ be an $\Fq$-conic in a tangent $\Fq$-subplane $\bar \pi$ of $\PG(2,q^3)$, and let $\bar T=\bar \pi\cap\li$.
We work in the Bose representation, so $\Pit$ is a transversal plane of the Bose spread $\mathbb S$; the line at infinity $\li$ corresponds to a line $g$ of $\Pit$; and in $\Pit$, $\C$ is an $\Fq$-conic in an $\Fq$-subplane $\pi$, with $T=\pi\cap g$. Let $\cpi$ be as defined in (\ref{defcpi}), so $\cpi$ fixes the points of $\pi$, has order 3 acting on the points of $\Pit$, 
 and has order 6 when acting on the points of the quadratic extension $\Pit\blackstar$. 
Let $\Sigma_{6,q}$ be a 6-space of $\PG(8,q)$  that contains the 5-space $\si=\langle g,g^q,g^{q^2}\rangle\cap\PG(8,q)$, then the Bruck-Bose setting is constructed as  $\IBB=\IB\cap\Sigma_{6,q}$, and $g$ is a transversal line of the Bruck-Bose spread $\S$.

We want to determine the structure of the variety $\VCBB$ in the $6$-space $\Sigma_{6,q}$. First note that  $\bar \C$  contains $t\in\{q,q+1\}$ affine points, so the variety $\VCBB$ contains exactly $t$ affine points (that is, points in  $\Sigma_{6,q}\setminus\si$) and no  three of these points are collinear. So $\VCBB$ is not contained in a line. 
 By Theorem~\ref{fqconic-order6}, 
$\VCBB=\VC\cap\Sigma_{6,q}$ is generically 
 a curve of degree 6. If this is a reducible curve,  the  components may lie in an extension. 
 We will show that for our analysis, it suffices to work in either the extension to $\PG(8,q^3)$ or to $\PG(8,q^6)$, and  describe $\VCBB$ in the extension $\PG(8,q^6)$. 
By Result~\ref{conic-subplane-B},    
in $\PG(8,q^6)$,  the  points of the variety  $\VCstarstar=\Qonestarstar\cap\cdots\cap\Qninestarstar$  coincide with the points on the $q^6+1$ planes $ \{\lround X\rround_\pi\st X\in\Cplusplus\}$.
 So the extension $\VCBBstarstar=\VCstarstar\cap \Sigma_{6,q}^\blackstar$ consists of the  subspaces 
 $  \lround X\rround_\pi\cap\Sigma_{6,q}^\blackstar$, $X\in\Cplusplus$. 
 
Next we consider whether $\VCBB$ can contain a line in some extension. We argue in the extension $\PG(8,q^6)$ so that we can use our notation, however the argument works for any finite extension. Suppose $\VCBBstarstar$ contains a line $\ell$ \emph{which is not contained in $\sistarstar$}. As $\ell$ is contained in $\VCstarstar=\{\lround X\rround_\pi\st X\in\Cplusplus\}$, and each $\pi$-scroll-plane meets $\Sigma_{6,q}^\blackstar\setminus\sistarstar$ in at most one point, the line $\ell$ is not contained in a $\pi$-scroll-plane. That is, the $q^6+1$ points of $\ell$ lie one in each of the $q^6+1$ $\pi$-scroll-planes $\{\lround X\rround_\pi\st X\in\Cplusplus\}$.  This contradicts the fact that any three planes in 
  $ \{ \lround X\rround_\pi\cap\Sigma_{6,q}^\blackstar\,|\,X\in\Cplusplus\}$ span $\PG(8,q^6)$ as $\Cplusplus$ is a non-degenerate conic. We conclude that if $\VCBBstarstar$ contains a line $\ell$, then $\ell$ is contained in $\sistarstar$.  It follows that if the variety $\VCBB$ contains a plane $\alpha$ in some extension,  then $\alpha$ is contained in the hyperplane at infinity.

We now proceed on a case by case basis. In each case, we first determine the intersection at infinity, namely $ \VCBBstarstar\cap\sistarstar =
 \VCstarstar\cap\sistarstar$ using  Lemma~\ref{scroll-plane-C} which describes the  latter. We  work in  the extension to either $\PG(8,q^3)$ or $\PG(8,q^6)$ as informed by  
  Lemma~\ref{scroll-plane-C}.    
There are four cases depending on how $\bar\C$ meets $\li$. We look at each case separately, labelling  to be consistent with Result~\ref{Fqconic-cases}. 

Case \TI. Suppose 
$\bar T$ is a point of $\bar\C$, so  $\bar \Cplus\cap\li=\{\bar T,\bar Q\}$ with $\bar T\neq\bar Q$. We first look at the intersection of $\VCBB$ with $\si$. In the $\PG(8,q^3)$ Bose setting, 
consider the point $Q\in\Pit$, as  $ Q\notin\pi$, $Q$ has orbit size 3 under $\cpi$. As 
$Q\in\Cplus$, $\conjB{Q},\conjBsq{Q}\in\Cplus$. As $Q\in g$ and $g$ is not a line of $\pi$,  we have 
$\conjB{Q},\conjBsq{Q}\notin g$ and so $\conjB{Q},\conjBsq{Q}\notin \sistar$.
By Lemma~\ref{scroll-plane-C}, 
$\VCBBstar\cap\sistar =\{\lround
X\rround_\pi\cap\sistar\st X^{\cpi^i}\in\Cplus\cap g \textup{ for\ some } i=0,1,2\}$. So   to determine  $\VCBBstar\cap\sistar$, we only need to consider four planes of $\VCstar$, namely $\lround T\rround_\pi$, $\lround Q\rround_\pi$, 
 $\lround \conjB{Q}\rround_\pi$, $\lround \conjBsq{Q}\rround_\pi$. 
As $T\in\pi$,  the first plane is $\lround T\rround_\pi=[T]\star$, which  lies in $\sistar$, so lies in $\VCBBstar$. Hence the spread plane $[T]$ lies in $\VCBB$. 
As $Q\in \Pit$, the second plane is  
 $\lround Q\rround_\pi=\langle Q,(Q^{\cpis})^q, (Q^{\cpi})^{q^2}\rangle.$  
As 
$\conjB{Q},\conjBsq{Q}\notin \sistar$,  by  Lemma~\ref{cpiplus}, $\lround Q\rround_\pi$ meets $\sistar$ in one point 
  $Q$.
Similarly 
 $\lround \conjB{Q}\rround _\pi=\langle\, \conjB{Q},\ Q^q,\, (\conjBsq{Q})^{q^2}\, \rangle=\lround Q\rround_\pi^{q}$  
 meets $\sistar$ in one point $Q^q$; and  $\lround \conjBsq{Q}\rround _\pi=\langle\, \conjBsq{Q},\ (\conjB{Q})^q,\, Q^{q^2}\, \rangle=\lround Q\rround_\pi^{q^2}$ 
 meets $\sistar$ in one point $Q^{q^2}$.
  So the  variety $\VCBB$ meets $\si$ in  the spread plane $[T]$ and (in the $\Fqqq$-extension) the three conjugate points 
  $Q,Q^q,Q^{q^2}\in\sistar\setminus\si$.

 Next we look at the affine points of the variety $\VCBB$ and show they are $q$ points of a $4$-dim nrc. 
  In $\PG(2,q^3)$, let $\bar U$ be a point on  $\li$ distinct from $\bar T,\bar Q$. Let $\bar \ell$ be a line through $\bar U$ distinct from $\li$. Denote the points of $\bar \C$ by $\bar T=\bar A_0,\bar A_1,\ldots,\bar A_{q}$ and let $\bar b=\{\bar \ell\cap \bar Q\bar A_i\,|\,i=0,\ldots,q\}$. For each $i=0,\ldots,q$, the line $\bar Q\bar A_i$ is tangent to $\bar \C$, so the set of points $\bar b$ is an $\Fq$-line that meets $\li$ (it is the projection of an $\Fq$-conic onto a line).    We look at this substructure in the Bruck-Bose representation in $\Sigma_{6,q}$. By \cite[Thm 2.5]{BJ-FFA}, the affine points of $[b]$ are the affine points of a line $\ell_b$, and $\ell_b$ meets $\si$ in a point of the spread plane $[U]$. Hence  $\Sigma_4=\langle [Q],\ell_b\rangle$ is a $4$-space. Moreover, $\Sigma_4$ contains the affine points $[A_1],\ldots,[A_q]$. Let $\Sigma_5$ be a $5$-space containing $\Sigma_4$. 
  By \cite[Thm 2.7]{BJ-FFA}, the affine points of $[\pi]$ are the affine points of a scroll $\X_\pi$ that consists of $q+1$ generator lines that rule a non-degenerate conic (contained in $[T]$) and a twisted cubic (contained in a $3$-space disjoint from $[T]$); this scroll is studied in \cite{BJV52}. 
  By \cite[Thm 5.1]{BJV52}, $\Sigma_5\cap\X_\pi$ is either a $5$-dim nrc;  a $4$-dim nrc and a generator line; or a $3$-dim nrc and up to two generator lines. As $\Sigma_5\cap\X_\pi$ contains the $q$ points $[A_1],\ldots,[A_q]$ which lie in a $4$-space, $\Sigma_5\cap\X_\pi$  is not a $5$-dim nrc.  
  Suppose $\Sigma_5\cap\X_\pi$ is a $3$-dim nrc $\N$  and up to two generator lines. 
  In $\PG(2,q^3)$, the points $\bar A_1,\ldots,\bar A_q$ lie on $q$ distinct lines through $\bar T$, so in $\Sigma_{6,q}$, the points $[A_1],\ldots,[A_q]$ lie on distinct generator lines of $\X_\pi$. The $4$-space $\Sigma_4$ meets $[T]$ in a point, so contains at most one generator line of $\X_\pi$. So
$\Sigma_4\cap\X_\pi$ is a $3$-dim nrc $\N$ and at most one generator line, so $\N$ contains at least $q-1$ of the points 
$[A_1],\ldots,[A_q]$. In the $\Fqqq$-extension, the nrc $\N$ and $\si$ meet in three points. As these points lie in $\VCBBstar$,   $\Nstar$ contains the points $Q,Q^q,Q^{q^2}$. Hence $\N$ lies in a $3$-space containing the spread plane $[Q]$. In $\PG(2,q^3)$ this corresponds to a line through $\bar Q$ that contains $q-1$ of the points $\bar A_1,\ldots,\bar A_{q}$, a contradiction.
Hence $\Sigma_5\cap\X_\pi$ is a $4$-dim nrc $\N$ and a generator line $\ell$. Further, by \cite[Cor 2.8]{BJV52}, $\ell$ does not lie in the $4$-space containing $\N$. As $[A_1],\ldots,[A_q]$ is a set of $q$ points no three collinear, it follows that $\Sigma_4\cap\X_\pi$ is a $4$-dim nrc, with one point in the spread plane $[T]$, namely the point $\Sigma_4\cap[T]$. This nrc meets $\si$ in four points over some extension, and these four points lie in the extension of $\VCBB$.
  Thus the variety $\VCBB$ decomposes into the spread plane $[T]$ and a $4$-dim nrc whose 
intersection 	with $\si$ consists of one  point of $[T]$ and the  three (conjugate) points $Q,Q^q,Q^{q^2}\in\sistar\setminus\si$.

Case \TII. Suppose $\bar T\notin\C$, and that $\bar \Cplus\cap\li=\{\bar P,\bar Q\}$, $\bar P\neq \bar Q$. In the transversal plane $\Pit$, we have $P,Q\in\Pit\setminus\pi$, so $P,Q$ both have orbit size 3 under $\cpi$. 
As $P,Q\in\Cplus$,  the four distinct points $\conjB{P},\conjBsq{P}, \conjB{Q},\conjBsq{Q}$ lie in $\Cplus$. As $P,Q \in g$ and 
 $g$ is not a line of $\pi$, the four points $\conjB{P},\conjBsq{P}, \conjB{Q},\conjBsq{Q}$
do not lie on $g$, and so do not lie in $\sistar$. 
By Lemma~\ref{scroll-plane-C}, $\VCBBstar\cap\sistar =\{\lround
X\rround_\pi\cap\sistar\st X^{\mathsf c^i_\pi}\in\Cplus\cap g \textup{ for some } i=0,1,2\}$,  so we only need consider  the
six $\pi$-scroll-planes, 
   $\lround P\rround_\pi$, 
 $\lround \conjB{P}\rround_\pi$, $\lround \conjBsq{P}\rround_\pi$, $\lround Q\rround_\pi$, 
 $\lround \conjB{Q}\rround_\pi$, $\lround \conjBsq{Q}\rround_\pi$.
Exactly  one of the T-points in $\lround P\rround_\pi=\langle P,(P^{\cpis})^q, (P^{\cpi})^{q^2}\rangle,$ lies in $\sistar$, namely $P$. So by Lemma~\ref{cpiplus},  
 $\lround P\rround_\pi$ meets $\sistar$ in the single point $P$.
 Similarly, for $i=1,2,3$, 
 $\lround P^{\cpi^i}\rround_\pi=\lround P\rround_\pi^{q^i}=\langle P^{q^i}, (P^{\cpis})^{q^{i+1}},(P^{\cpi})^{q^{i+2}}\rangle $ meets $\sistar$ in one point $P^{q^i}$; and 
 $\lround Q^{\cpi^i}\rround_\pi=\lround Q\rround_\pi^{q^i}$ meets $\sistar$ in one point $Q^{q^i}$. 
 Thus   $\VCBBstar\cap\sistar$ consists of  the six points 
 $\{P,P^q,P^{q^2}, Q,Q^q,Q^{q^2}\}$.  
Hence there is no linear component at infinity, and so, as argued above, $\VCBB$ contains no plane.   It follows from Theorem~\ref{fqconic-order6} that the variety $\VCBB$ is a    curve $\K=\V^6_1$ of degree six. The $\Fq$-points of $\K$ are precisely the points of $[\C]$, that is, a set of  $q+1$ affine points, no three collinear. Moreover, the points of $[\C]$ do not lie in a plane (since a plane of $\Sigma_{6,q}$ corresponds to either a subset of a line of $\PG(2,q^3)$, or an $\Fq$-plane secant to $\li$, and $\bar\C$ does not lie in either). 

Suppose the curve $\K$ is reducible.  As $\K$ contains $q+1$ affine ($\Fq$-rational) points that do not lie in a plane, $\K$ contains an irreducible component $\A$ which is a curve of degree $x$ with $3\leq x\leq 6$. As $\K$ is reducible, $\K$ also contains a component $\B$ of degree $y$ with $0<y\leq3$. 
The curve $\B$ spans a subspace $\Sigma$ of dimension at most three. Moreover, $\Sigma$ lies in the extension $\Sigma_{6,q}^\blackstar\setminus\Sigma_{6,q}$. Hence the conjugate curves $\B^q$, $\B^{q^2}$ are distinct from $\B$, and are contained in $\K$. Adding degrees of $\A,\B,\B^q,\B^{q^2}$, we conclude that $y=1$. As argued above, if $\VCBB$ contains a line $\B$ over some extension, then $\B$ is contained in the hyperplane at infinity. However,  $\K^\star\cap\sistar=\{P,P^q,P^{q^2}, Q,Q^q,Q^{q^2}\}$, so $\K$ does not contain a line at infinity over any extension. 
We conclude that the curve $\K$ is irreducible.

Next we show that $\K$ does not lie in a $5$-space. In $\Sigma_{6,q}^\star$, the curve $\Kstar$ contains the six points $\{P,P^q,P^{q^2}, Q,Q^q,Q^{q^2}\}$, which span the $5$-space $\sistar$. As $\Kstar$ also contains an affine point,  $\Kstar$ does not lie in a 5-space. 
 Hence the curve $\Kstar$ is irreducible and does not lie in a $5$-space so by \cite[Prop 18.9]{harris}, 
  $\Kstar$  is a   6-dim nrc.  
  Hence the points of $[\C]$ form a 6-dim nrc.

Case \TIII. Suppose $\bar T\notin\C$, and that $\bar \Cplus\cap\li=\{\bar P\}$.
 A  similar argument to case \TII\ shows that $\VCBBstar\cap\sistar=\{P,P^q,P^{q^2}\}$.  
 So there is no linear component at infinity, it follows from Theorem~\ref{fqconic-order6} that the variety $\VCBB$ is a curve $\K=\V^6_1$ of degree six.
  The curve $\Kstar$ meets $\sistar$ in six points, so each point in $\{P,P^q,P^{q^2}\}$ is a repeated intersection. 
  A similar argument to case \TII\ shows that $\K$ is irreducible. 
     Suppose $\K$ lies in a 5-space $\Pi_5$  (we work to a contradiction to show this is not possible). 
  In $\PG(2,q^3)$, let $\bar U$ be a point on  $\li$ distinct from $\bar T,\bar P$. Let $\bar \ell$ be a line through $\bar U$ distinct from $\li$. Denote the points of $\bar \C$ by $\bar A_1,\ldots,\bar A_{q+1}$ and let $\bar b=\{\bar \ell\cap \bar P\bar A_i\,|\,i=1,\ldots,q+1\}$. For each $i=1,\ldots,q+1$, the line $\bar P\bar A_i$ is tangent to $\bar \C$, so the set of points $\bar b$ is an $\Fq$-line (it is the projection of an $\Fq$-conic onto a line).  Moreover, $\bar T\notin\bar \C$, so $\bar b$  is disjoint from $\li$. We look at this substructure in the Bruck-Bose representation in $\Sigma_{6,q}$. The $5$-space $\Pi_5$ contains the spread plane $[P]$ and the points $[A_i]$, $i=1,\ldots,q+1$. So $\Pi_5$ contains the $3$-spaces corresponding to the lines $\bar P\bar A_i$, and hence contains the set $[b]$. The $3$-space $[\ell]$ meets $\si$ in the spread plane $[U]$ and so is disjoint from $[P]$, hence   $[\ell]\cap \Pi_5$ is a plane. That is, $[b]$ lies in a plane. This contradicts
  \cite[Thm 2.5]{BJ-FFA} which shows that $[b]$ is a 3-dim nrc in the 3-space $[\ell]$. Hence $\K$ does not lie in a $5$-space. That is, $\K$ is an irreducible curve of degree six that is not contained in a $5$-space, so by \cite[Prop 18.9]{harris}, 
  $\K$  is a   6-dim nrc.  Hence the points of $[\C]$ form a 6-dim nrc.

Case \TIV. Suppose  $\bar \Cplus\cap\li=\emptyset$, so  $\barCplusplus\cap \liplusplus=\{\bar P,\bar Q\}\subset\liplusplus\setminus\li$, $\bar P\neq \bar Q$. By Result~\ref{Fqconic-cases}, $\bar Q=\bar P^{q^3}=\bar P^{\ocpi^3}$.
In the Bose setting in $\PG(8,q^6)$,
we have two points $P,Q\in\Pit\blackstar\setminus\Pit$, with $P,Q\in\Cplusplus\cap \gstar$ 
and  $Q=P^{q^3}=P^{\cpi^3}$.  
The six distinct points $P,P^{\cpi}, P^{\cpis}$, $Q=P^{\mathsf c^3_\pi}, P^{\mathsf c^4_\pi}, P^{\mathsf c^5_\pi}$ all lie in $\Cplusplus$, while only two of them, namely $P,Q$ lie in $\sistarstar$.  
By Lemma~\ref{scroll-plane-C}, $\VCBBstarstar\cap\sistarstar =\{\lround
X\rround_\pi\cap\sistarstar\st X^{\mathsf c^i_\pi}\in\Cplusplus\cap \gstar \textup{ for some } i=0,\ldots,5\}$,  so we only need consider  the
six $\pi$-scroll-planes, $\lround P^{\mathsf c^i_\pi}\rround_\pi$, $i=0,\ldots,5$.
As $\lround P^{\cpi^i}\rround_\pi=\lround P\rround_\pi^{q^i}=\langle P^{q^i}, (P^{\cpi^5})^{q^{i+1}},(P^{\cpi^4})^{q^{i+2}}\rangle $,  by Lemma~\ref{cpiplus}, $\lround P^{\mathsf c^i_\pi}\rround_\pi$ meets $\sistarstar$ in the point $P^{q^i}$, $i=0,\ldots,5$.
 Hence   $\VCBBstarstar\cap\sistarstar$ consists of  the six points 
 $\{P,P^q,P^{q^2}, P^{q^3},P^{q^4},P^{q^5}\}$. 
So there is no linear component at infinity, it follows from Theorem~\ref{fqconic-order6} that the variety $\VCBB$ is a curve $\K=\V^6_1$ of degree six.  A similar argument to case \TII\ shows that the curve $\Kstar$ is irreducible and does not lie in a $5$-space.   So by \cite[Prop 18.9]{harris}, 
  $\Kstar$  is a   6-dim nrc. Hence the points of $[\C]$ form a 6-dim nrc.
  \end{proof}

Full details of how  these nrcs meet $\si$ follow from the proof of Theorem~\ref{conic-tgt}.

\begin{corollary}\Label{cor:tgt}
 Let $\bar \C$ be an $\Fq$-conic in a tangent $\Fq$-subplane $\bar \pi$ of $\PG(2,q^3)$, let $\bar T={\li}\cap\bar \pi$. In  the exact-at-infinity Bruck-Bose representation in $\PG(6,q)$,  we have the following. 
\begin{enumerate}
\item   If $\bar T\in\bar \C$, then   $\bar \Cplus\cap\li=\{\bar T,\bar Q\}$ for some   $\bar  Q\neq \bar T$. The pointset of  $[\C]$ consists of the spread plane  $[T]$ and a 4-dim nrc $\N_4$. Further, $\Nfourstar\cap\sistar$ consists of one real point of $[T]$, and   the points $Q, Q^{q}, Q^{q^2}$.

\item Otherwise $\bar T\notin\bar \C$, and $\bar \Cplus$ meets $\li$ in two points, $\bar P,\bar Q$,  possibly repeated, possibly in the quadratic extension.  In this case   $\VCBB$ is a 6-dim nrc $\N_6$ of $\PG(6,q)$.  Further    $\Nsixstar$ meets $\sistar$ in    six points   $\{P,P^{q},P^{q^2},Q,Q^{q},Q^{q^2}\}$, where $P,Q\in g$,  possibly repeated, or possibly  in the extension $\gstar\setminus g$, in which case $Q=P^{q^3}$.   
\end{enumerate}
\end{corollary}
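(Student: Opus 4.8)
The plan is to read off the stated intersection data directly from the case analysis already carried out in the proof of Theorem~\ref{conic-tgt}, organised according to the four cases \TI--\TIV\ of Result~\ref{Fqconic-cases}. Indeed, the remark preceding this corollary is literally the assertion that nothing new is needed: Theorem~\ref{conic-tgt} decides the type of $[\C]$ in each case, and its proof also pins down $\V([\C])\cap\si$ in each case, so the task here is only to collate and restate that information in the two-part form above.

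For part~1 I would note that $\bar T\in\bar\C$ is precisely case \TI, so Result~\ref{Fqconic-cases} gives $\bar\Cplus\cap\li=\{\bar T,\bar Q\}$ with $\bar T\ne\bar Q$, and the corresponding part of the proof of Theorem~\ref{conic-tgt} shows that $\VCBB$ decomposes into the spread plane $[T]$ together with a $4$-dim nrc $\N_4$. In that argument, Lemma~\ref{scroll-plane-C} reduces the computation of $\VCBBstar\cap\sistar$ to the four $\pi$-scroll-planes $\lround T\rround_\pi,\lround Q\rround_\pi,\lround Q^{\cpi}\rround_\pi,\lround Q^{\cpi^2}\rround_\pi$, of which the first equals $[T]\star\subset\sistar$ while Lemma~\ref{cpiplus} gives that each of the other three meets $\sistar$ in a single T-point, namely $Q,\,Q^q,\,Q^{q^2}$ respectively. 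Removing the plane $[T]$ from $\VCBB$ leaves $\N_4$, so $\Nfourstar\cap\sistar$ is the single real point $[T]\star\cap\Nfourstar$ of $[T]$ together with the conjugate triple $Q,Q^q,Q^{q^2}$, which is exactly the assertion of part~1.

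For part~2 the hypothesis $\bar T\notin\bar\C$ puts us in one of \TII,\ \TIII,\ \TIV, and in each of these the proof of Theorem~\ref{conic-tgt} establishes that $\VCBB$ is a $6$-dim nrc $\N_6$; it remains to assemble the descriptions of $\N_6\cap\si$. In case \TII, $\bar\Cplus\cap\li=\{\bar P,\bar Q\}$ with $\bar P\ne\bar Q$ and $\VCBBstar\cap\sistar=\{P,P^q,P^{q^2},Q,Q^q,Q^{q^2}\}$ with $P,Q$ distinct points of $g$. In case \TIII, $\bar\Cplus\cap\li=\{\bar P\}$, so writing $\bar Q=\bar P$ (hence $Q=P\in g$) the set $\VCBBstar\cap\sistar=\{P,P^q,P^{q^2}\}$ is $\{P,P^q,P^{q^2},Q,Q^q,Q^{q^2}\}$ with each point a repeated intersection. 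In case \TIV, $\barCplusplus\cap\liplusplus=\{\bar P,\bar Q\}\subset\liplusplus\setminus\li$ with $\bar Q=\bar P^{q^3}$, so $P,Q\in\gstar\setminus g$ with $Q=P^{q^3}$, and $\VCBBstarstar\cap\sistarstar=\{P,P^q,P^{q^2},P^{q^3},P^{q^4},P^{q^5}\}=\{P,P^q,P^{q^2},Q,Q^q,Q^{q^2}\}$. Uniting the three subcases: $\bar\Cplus$ (respectively $\barCplusplus$) meets $\li$ (respectively $\liplusplus$) in two points $\bar P,\bar Q$, possibly equal or in the quadratic extension, and $\N_6\star$ meets $\sistar$ in $\{P,P^q,P^{q^2},Q,Q^q,Q^{q^2}\}$, where $P,Q\in g$ are possibly repeated or else lie in $\gstar\setminus g$ with $Q=P^{q^3}$.

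Throughout, the passage between the points $\bar P,\bar Q$ of $\li$ and the points $P,Q$ of $g$ (and of $\gstar$) is the correspondence $\bar P\in\li\leftrightarrow P\in g$ recorded in Section~\ref{sec-back}, together with its quadratic extension; this is what allows the $\pi$-scroll-plane computations to be phrased in the form stated here. I expect no genuine obstacle: the only care required is bookkeeping, namely tracking across the three subcases of part~2 whether the two points of $\bar\Cplus\cap\li$ coincide (case \TIII), are distinct on $\li$ (case \TII), or form a conjugate pair in $\liplusplus\setminus\li$ (case \TIV), and correspondingly whether $P,Q$ lie on $g$ or in $\gstar\setminus g$.
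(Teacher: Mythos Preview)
Your proposal is correct and takes essentially the same approach as the paper: the paper's proof is the single sentence ``Full details of how these nrcs meet $\si$ follow from the proof of Theorem~\ref{conic-tgt}'', and you have simply spelled out that collation case by case through \TI--\TIV, which is exactly what is intended.
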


\subsection{$\Fq$-conics in  an $\Fq$-subplane exterior to $\li$}

The final case to look at is 
$\Fq$-conics contained in an $\Fq$-subplane  that is exterior to $\li$. 
We show they  correspond in the Bruck-Bose representation to either 3-dim or 6-dim nrcs, and determine how these nrcs meet $\si$. 
A coordinate based proof was given in 
 \cite{BJ-ext3}. Here we give a geometric proof, moreover, the description in Corollary~\ref{cor:ext} is a stronger result than that in \cite{BJ-ext3}.

%
%

\begin{theorem}\Label{conic-ext}
Let $\bar \C$ be an $\Fq$-conic in an exterior $\Fq$-subplane $\bar \pi$ of $\PG(2,q^3)$. 
In  the exact-at-infinity Bruck-Bose  representation in $\PG(6,q)$,  either: $\VCBB$ is a 6-dim nrc; or $\VCBB$ is a 3-dim nrc of $\PG(6,q)$ and $\VCBBstar$ consists of a 3-dim nrc and three T-lines of $\sistar$. 
\end{theorem}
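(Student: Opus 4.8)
The plan is to run the argument of Theorems~\ref{conic-sec} and~\ref{conic-tgt}: pass to the Bose representation, use Lemma~\ref{scroll-plane-C} to identify which $\pi$-scroll-planes meet the $5$-space at infinity and Lemma~\ref{cpiplus} to describe how, read off $\VCBBstar\cap\sistar$ (or $\VCBBstarstar\cap\sistarstar$), and then apply Theorem~\ref{fqconic-order6} with Corollary~\ref{extend-C} to recover $k$. First I would set up the Bose picture: $\Pit$ is a transversal plane of $\mathbb S$, $\li$ corresponds to a line $g$ of $\Pit$, $\pi$ is an $\Fq$-subplane of $\Pit$ with $\pi\cap g=\emptyset$ (as $\bar\pi$ is exterior to $\li$), $\C$ is an $\Fq$-conic in $\pi$, $\ocpi$ induces $\cpi$ on $\Pit$ (order $3$) and on $\Pit\blackstar$ (order $6$), and the two $(\bar\pi,\li)$-carriers on $\li$ correspond to points $E,E^{\cpi}\in g$ while the third carrier $E^{\cpis}$ does \emph{not} lie on $g$ --- this asymmetry is the key feature. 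Take $\Sigma_{6,q}\supseteq\si=\langle g,g^q,g^{q^2}\rangle\cap\PG(8,q)$ so that $\IBB=\IB\cap\Sigma_{6,q}$ and $g,g^q,g^{q^2}$ are the transversal lines of $\S$. Then split into the four cases \EI--\EIV of Result~\ref{Fqconic-cases}.

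Cases \EII, \EIII and \EIV reduce to cases \TII, \TIII and \TIV of Theorem~\ref{conic-tgt}, because the $\cpi$-orbit structure of $\Cplus\cap g$ (resp.\ $\Cplusplus\cap\gstar$) is the same. In \EII one has $\Cplus\cap g=\{P,Q\}$ with $P,Q$ in distinct $\cpi$-orbits of size $3$ and $P^{\cpi},P^{\cpis},Q^{\cpi},Q^{\cpis}\notin g$; by Lemma~\ref{scroll-plane-C} only the six $\pi$-scroll-planes $\lround P\rround_\pi,\lround P^{\cpi}\rround_\pi,\lround P^{\cpis}\rround_\pi$ and the three for $Q$ are relevant, and for each of these exactly one of the three T-points defining the plane lies on a transversal line, so by Lemma~\ref{cpiplus} each meets $\sistar$ in a single point, giving $\VCBBstar\cap\sistar=\{P,P^q,P^{q^2},Q,Q^q,Q^{q^2}\}$; since $\VCBBstar\cap\sistar$ contains no line or plane, Corollary~\ref{extend-C} forces $k=6$. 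In \EIII the same count gives $\VCBBstar\cap\sistar=\{P,P^q,P^{q^2}\}$, and $k=3$ is ruled out exactly as in case \TIII (a $3$-space $\Sigma$ whose extension contains $\langle P,P^q,P^{q^2}\rangle$ would correspond to a line of $\PG(2,q^3)$ meeting $\bar\C$ in at least $q-1$ affine points), so $k=6$. In \EIV the configuration in $\PG(8,q^6)$ is literally that of case \TIV, giving $\VCBBstarstar\cap\sistarstar=\{P,P^q,\ldots,P^{q^5}\}$ and $k=6$. Thus in these three cases $\VCBB$ is a $6$-dim nrc.

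The new case is \EI, where $\bar\Cplus\cap\li=\{\bar E,\bar E^{\ocpi}\}$, so $\Cplus\cap g=\{E,E^{\cpi}\}$; since $\{E,E^{\cpi},E^{\cpis}\}$ is a single $\cpi$-orbit, Lemma~\ref{scroll-plane-C} leaves only the three $\pi$-scroll-planes $\lround E\rround_\pi,\lround E^{\cpi}\rround_\pi,\lround E^{\cpis}\rround_\pi$ to consider. Using $E^{\cpi^3}=E$ I would compute $\lround E\rround_\pi=\langle E,(E^{\cpis})^q,(E^{\cpi})^{q^2}\rangle$ and its $q$- and $q^2$-images. In each of these three T-planes exactly \emph{two} of the three defining T-points lie on transversal lines (for $\lround E\rround_\pi$ these are $E\in g$ and $(E^{\cpi})^{q^2}\in g^{q^2}$, while $(E^{\cpis})^q\notin g^q$ precisely because $\bar E^{\ocpis}\notin\li$), so by Lemma~\ref{cpiplus} each meets $\sistar$ in a T-line: $m_0=\langle E,(E^{\cpi})^{q^2}\rangle$, $m_1=\langle E^{\cpi},E^q\rangle$, $m_2=\langle (E^{\cpi})^q,E^{q^2}\rangle$. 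Hence $\VCBBstar\cap\sistar=m_0\cup m_1\cup m_2$. A short dimension count inside the $5$-space $\langle g,g^q,g^{q^2}\rangle$ (using $\langle g,g^q\rangle\cap\langle g,g^{q^2}\rangle=g$ and its conjugates, together with $E\neq E^{\cpi}$) shows $m_0,m_1,m_2$ are pairwise skew, so the linear part at infinity has total dimension $3$; by Corollary~\ref{extend-C} this forces $k=3$. Therefore $\VCBB$ is a $3$-dim nrc and $\VCBBstar$ is a $3$-dim nrc together with the three T-lines $m_0,m_1,m_2$ of $\sistar$; since the $q$-Frobenius permutes $\{m_0,m_1,m_2\}$ cyclically these lines carry no $\Fq$-point, consistent with $\VCBB$ itself being the $3$-dim nrc.

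The main obstacle is the bookkeeping in case \EI: for each of the three $\pi$-scroll-planes one must correctly identify which defining T-points land on transversal lines --- it is the asymmetry $\bar E,\bar E^{\ocpi}\in\li$ versus $\bar E^{\ocpis}\notin\li$ that turns these three scroll-planes into three T-lines rather than into three conjugate points --- and then verify the three T-lines are pairwise skew, since it is precisely their total dimension $3$ that pins down $k=3$. The remaining cases follow the template of the tangent case with no new ideas.
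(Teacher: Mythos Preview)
Your proof is correct and follows essentially the same route as the paper's: in cases \EII, \EIII, \EIV\ you rightly observe that the argument is that of cases \TII, \TIII, \TIV\ of Theorem~\ref{conic-tgt}, and in case \EI\ you compute the three relevant $\pi$-scroll-planes, find that each meets $\sistar$ in a T-line (your $m_0,m_1,m_2$ are exactly the paper's $h,h^q,h^{q^2}$), and deduce $k=3$ via Theorem~\ref{fqconic-order6}/Corollary~\ref{extend-C}. One small remark: the pairwise-skewness check on the three T-lines, while true, is not needed for the degree count --- Theorem~\ref{fqconic-order6} only requires that the dimensions of the linear components at infinity sum to $6-k$, so three \emph{distinct} lines already force $k=3$ regardless of their mutual position.
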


\begin{proof}
Let $\bar \C$ be an $\Fq$-conic in an exterior $\Fq$-subplane $\bar \pi$ of $\PG(2,q^3)$ and let  $\ocpi$ be as defined in (\ref{defcpi}). Denote  the  $(\bar \pi,\li)$-carriers  by $\bar E,\bar E^{\ocpi},\bar E^{\ocpis}$, such that $\bar E,\bar E^{\ocpi}\in\li$. We work in the Bose representation, so the line at infinity $\li$ corresponds to a line $g$ of the transversal plane $\Pit$ of the regular 2-spread $\mathbb S$. Let $\Sigma_{6,q}$ be a 6-space of $\PG(8,q)$  that contains the 5-space $\si=\langle g,g^q,g^{q^2}\rangle\cap\PG(8,q)$, we use the Bruck-Bose setting $\IBB=\IB\cap\Sigma_{6,q}$. 

We want to determine the structure of the variety $\VCBB$ in the $6$-space $\Sigma_{6,q}$. There are four cases depending on how $\bar\C$ meets $\li$. We look at each case separately, labelling  to be consistent with Result~\ref{Fqconic-cases}.

Case \EI. Suppose  that $\bar \Cplus\cap\li=\{\bar E,\bar E^{\ocpi}\}$. 
 By Result~\ref{result-conj}, we also have $\bar E^{\ocpis}\in\bar\Cplus$.  
By Lemma~\ref{scroll-plane-C}, $\VCBBstar\cap\sistar =\{\lround
X\rround_\pi\cap\sistar\st X^{\mathsf c^i_\pi}\in\Cplus\cap g \textup{ for some } i=0,1,2\}$. As the orbit of $\bar E$ under $\ocpi$ is $\{\bar E,\bar E^{\ocpi},\bar E^{\ocpis}\}$, we 
 only need consider  the
three $\pi$-scroll-planes, $\lround E\rround_\pi$, 
$\lround E^{\cpi}\rround_\pi$, $\lround E^{\cpis}\rround_\pi$. 
First consider the plane $\lround E\rround_\pi$. Exactly two of the T-points of $\lround E\rround_\pi=\langle E,(\conjBsq{E})^q,(\conjB{E})^{q^2}\rangle=\langle E,(E^{\cpis})^q,(E^{\cpi})^{q^2}\rangle$ lie in $\sistar$, namely 
$E$ and $(E^{\cpi})^{q^2}$. Hence 
by Lemma~\ref{cpiplus}, 
$\lround E\rround_\pi\cap\sistar$ is the line $h=E(E^{\cpi})^{q^2}$. 
Similarly $\lround E^{\cpi}\rround_\pi$ and $\lround E^{\cpis}\rround_\pi$  meet $\sistar$ in the lines
$h^q$ and $h^{q^2}$ respectively. Hence 
 $\VCBB$ meets $\si$ in  the three lines $h,h^q,h^{q^2}$ of $\sistar\setminus\si$. 
Thus by Theorem~\ref{fqconic-order6}, $\VCBB$ reduces to an irreducible curve $\K=\V^3_1$ of degree three and  three lines which lie  in the extension $\sistar$.  Suppose $\K$ is contained in a plane $\alpha$ and let $\ell=\alpha\cap\si$. If $\ell$ is contained in a spread plane $[X]$, then $\K$ is contained in the $3$-space $\Sigma_3=\langle [X],\alpha\rangle$. The $3$-space $\Sigma_3$ contains a spread plane, so corresponds to a line of $\PG(2,q^3)$, implying that $\bar\C$ is contained in a line of $\PG(2,q^3)$, a contradiction. Otherwise $\ell$ meets $q+1$ spread planes, and so by \cite[Thm 2.2]{BJ-FFA}, $\alpha$ corresponds to an $\Fq$-plane of $\PG(2,q^3)$ that is secant to $\li$, implying that $\bar\C$ lies in an $\Fq$-plane secant to $\li$, a contradiction. Hence $\K$ is not contained in a plane, so by \cite[Prop 18.9]{harris}, $\K$ is a $3$-dim nrc.

Case \EII. Suppose  that $\bar \Cplus\cap\li=\{\bar P,\bar Q\}$, with  $\bar P\neq \bar Q$ and $\{\bar P,\bar Q\}\cap\{ \bar E,\bar E^{\ocpi}\}=\emptyset$.  
Similar to the proof of Case \TII\ in Theorem~\ref{conic-tgt},  
 we are interested in six planes of $\VCstar$, namely $\lround P\rround_\pi$, $\lround \conjB{P}\rround_\pi$, 
$\lround \conjBsq{P}\rround_\pi$, $\lround Q\rround_\pi$, $\lround \conjB{Q}\rround_\pi$, 
$\lround \conjBsq{Q}\rround_\pi$, moreover   each  meets $\sistar$ in a  point,  and  $\VCBB$ is a 6-dim nrc whose extension meets $\sistar$ in the six points $\{P,P^{q},P^{q^2},Q,Q^{q},Q^{q^2}\}$.  

  Case \EIII.   Suppose  that $\li$ tangent to $\Cplus$, so by Result~\ref{Fqconic-cases}, $q$ is odd and $\bar \Cplus\cap\li=\{\bar P\}$, $\bar P$ not a $(\bar\pi,\li)$-carrier. 
 The proof in this case is identical to the proof of case \TIII\ in    Theorem~\ref{conic-tgt}. The points of $[\C]$ form a 6-dim nrc that meets $\sistar$ in the points $P,P^q,P^{q^2}$, each repeated. 

Case \EIV. Suppose  that $\bar \Cplus\cap\li=\emptyset$, then by Result~\ref{Fqconic-cases}, $\barCplusplus\cap \liplusplus=\{\bar P,\bar Q\}\subset\liplusplus\setminus\li$, $\bar P\neq \bar Q$. Moreover, by Result~\ref{Fqconic-cases},   $\bar Q=\bar P^{q^3}=\bar P^{\ocpi^3}$.
So in the transversal plane $\Pit$ in $\PG(8,q^3)$, $P,Q\in \gstar\setminus g$ are points of $\Cplusplus$, and $Q=P^{q^3}=P^{\cpi^3}$.
Similar to the proof of  Case \TIV\ in Theorem~\ref{conic-tgt}, 
we are interested in six $\pi$-scroll-planes, namely $\lround P\rround_\pi$, $\lround \conjB{P}\rround_\pi$, 
$\lround \conjBsq{P}\rround_\pi$, $\lround Q\rround_\pi$, $\lround \conjB{Q}\rround_\pi$, 
$\lround \conjBsq{Q}\rround_\pi$;  each  meets $\sistarstar$ in a  point,  and   $\VCBB$ is a 6-dim nrc that meets $\sistarstar$ in the six points $\{P,P^{q},P^{q^2},P^{q^{3}},P^{q^{4}},P^{q^{5}}\}$, where $P\in\gstar\setminus g$. 
\end{proof}

We can describe in  detail how $\VCBB$ meets $\si$ in each case. 

\begin{corollary}\Label{cor:ext} Let $\bar \C$ be an $\Fq$-conic in an exterior $\Fq$-subplane $\bar \pi$ of $\PG(2,q^3)$ and 
let $\bar E,\bar E^{\ocpi}\in\li$ be  $(\bar \pi,\li)$-carriers of $\bar \pi$. In  the exact-at-infinity Bruck-Bose representation in $\PG(6,q)$  we have the following. 
\begin{enumerate}
\item If $\bar \Cplus\cap\li=\{\bar E,\bar E^{\ocpi}\}$, then    $\VCBB$ consists of a 3-dim nrc $\N_3$ of $\PG(6,q)$. Further,  $\VCBBstar\cap\sistar$ is the three T-lines  $h=E(E^{\cpi})^{q^2}$, $h^q$, $h^{q^2}$.  Further $\Nthreestar\cap\sistar=\{R,R^{q}, R^{q^2}\}$ where $R\in h$, and $R$ is not contained in an extended spread plane.

\item Otherwise  $\bar \Cplus$ meets $\li$ in two points, $\bar P,\bar Q$, which are not $(\bar\pi,\li$)-carriers,  possibly repeated, possibly in the quadratic extension. In this case   $\VCBB$ is a 6-dim nrc $\N_6$ of $\PG(6,q)$.  Further     $\Nsixstar$ meets $\sistar$ in    six points   $\{P,P^{q},P^{q^2},Q,Q^{q},Q^{q^2}\}$, where $P,Q\in g$,  possibly repeated, or possibly  in the extension $\gstar\setminus g$, in which case $Q=P^{q^3}$.   
\end{enumerate}

\end{corollary}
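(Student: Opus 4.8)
The plan is to read off most of Corollary~\ref{cor:ext} from the proof of Theorem~\ref{conic-ext} and then to add the one extra ingredient needed for part~1. That proof already shows that in cases~\EII, \EIII, and~\EIV the variety $\VCBB$ is a 6-dim nrc whose extension meets $\sistar$ (respectively $\sistarstar$) in the six points $\{P,P^q,P^{q^2},Q,Q^q,Q^{q^2}\}$ with $P,Q$ as listed --- this is part~2 --- and that in case~\EI, $\VCBB$ consists of a 3-dim nrc $\N_3$ with $\VCBBstar\cap\sistar=h\cup h^q\cup h^{q^2}$, where $h=E(E^{\cpi})^{q^2}$. So I would take all of this as given and concentrate on the outstanding assertion in part~1, which locates $\Nthreestar\cap\sistar$.

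First I would pin down the 3-space $\Sigma$ spanned by $\N_3$. Since $\bar\pi$ is exterior to $\li$, the conic $\bar\C$ misses $\li$ and is wholly affine, so it contributes $q+1$ affine $\Fq$-points to $\VCBB$; as the rest of $\VCBB$ extends into $h\cup h^q\cup h^{q^2}\subseteq\sistar$ and hence has its $\Fq$-points inside $\si$, all $q+1$ of these affine points lie on $\N_3$, which forces $\N_3\cap\si=\emptyset$. A dimension count then gives $\dim(\Sigma\cap\si)=3+5-6=2$, so $\mu_0:=\Sigma\cap\si$ is a plane of $\si$, which is a hyperplane of $\Sigma$. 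Passing to $\PG(6,q^3)$, the 3-dim nrc $\Nthreestar$ meets the hyperplane $\mu_0\star=\Sigma\star\cap\sistar$ of $\Sigma\star$ in three points counted with multiplicity; none of them is $\Fq$-rational, because $\mu_0\subseteq\si$ while $\N_3\cap\si=\emptyset$, so the Frobenius $X\mapsto X^q$ permutes them without a fixed point and they must therefore be distinct, giving $\Nthreestar\cap\sistar=\Nthreestar\cap\mu_0\star=\{R,R^q,R^{q^2}\}\subseteq\sistar\setminus\si$. As this set lies in $\VCBBstar\cap\sistar=h\cup h^q\cup h^{q^2}$, each of the conjugate points $R,R^q,R^{q^2}$ lies on one of the conjugate lines $h,h^q,h^{q^2}$, and I may choose the labelling so that $R\in h$.

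The step I expect to be the main obstacle is showing that $R$ lies on no extended spread plane. I would argue by contradiction: suppose $R\in[T]\star$ for some $[T]\in\S$. As $[T]$ is defined over $\Fq$, its extension $[T]\star$ is invariant under $X\mapsto X^q$, so $R^q,R^{q^2}\in[T]\star$ as well; the three distinct points $R,R^q,R^{q^2}$ of $\Nthreestar$ are not collinear (no three points of an nrc are), so they span a plane, which therefore coincides with $[T]\star$. But those points also lie in $\Sigma\star$, so $[T]\star\subseteq\Sigma\star\cap\sistar=\mu_0\star$, hence $[T]\star=\mu_0\star$ and $[T]=\mu_0=\Sigma\cap\si$. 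Then $\Sigma$ is a 3-space of $\PG(6,q)$, not contained in $\si$, containing an element of $\S$, hence a line of $\IBB\cong\PG(2,q^3)$; this would put all $q+1$ (affine) points of $\bar\C$ on one line of $\PG(2,q^3)$, contradicting that $\bar\C$ is a non-degenerate conic. So $R$ lies on no extended spread plane, which finishes part~1, and part~2 has already been read off from the case analysis in the proof of Theorem~\ref{conic-ext}.
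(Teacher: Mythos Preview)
Your argument is correct. You reproduce the paper's reading-off of part~2 from the case analysis in Theorem~\ref{conic-ext}, and your treatment of the first half of part~1 (that $\N_3$ is disjoint from $\si$, that $\Nthreestar\cap\sistar$ consists of three Frobenius-conjugate points, and that one may take $R\in h$) matches the paper's. The difference is in the last step, showing $R$ lies on no extended spread plane. The paper first rules out $R$ being a T-point (i.e.\ $R\neq E,(E^{\cpi})^{q^2}$) by a span argument, and then invokes the external result \cite[Lemma~3.3]{BJW3} that a non-T-point lies on a unique T-plane: since $R$ already lies on the T-plane $\lround E\rround_\pi$, it cannot also lie on a spread plane. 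Your route is more self-contained: assuming $R\in[T]\star$, Frobenius invariance forces $R,R^q,R^{q^2}\in[T]\star$, whence $[T]=\Sigma\cap\si$, so $\Sigma$ is a Bruck-Bose line carrying all $q+1$ points of $\bar\C$, contradicting non-degeneracy. This is essentially the same mechanism the paper uses in Case~\TIII\ of Theorem~\ref{conic-tgt}, and it avoids the citation to \cite{BJW3}; on the other hand, the paper's approach yields the slightly sharper intermediate fact that $R$ is not a T-point.
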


\begin{proof} Part 2 is proved in  the proof of Theorem~\ref{conic-ext}. Consider part 1, where $\bar \Cplus\cap\li=\{\bar E,\bar E^{\ocpi}\}$. By the proof of Case \EI\ in Theorem~\ref{conic-ext},  the variety $\VCBB$ consists of a 3-dim nrc $\N_3$ of $\PG(6,q)$, and $\VCBBstar\cap\sistar$ is the three lines $h=E(E^{\cpi})^{q^2}$, $h^q$, $h^{q^2}$. As $\bar \pi$ is exterior to $\li$, $\bar \C$ is exterior to $\li$. Hence $\VCBB$ contains $q+1$ affine points, so these  are the points of $\N_3$,  and so $\N_3$ is disjoint from $\si$. 
Hence  $\Nthreestar\cap\sistar$ is three points which are contained in $h\cup h^q\cup h^{q^2}$. 
As $\Nthreestar$ meets one of these lines, it meets   all three, that is,   let $\Nthreestar\cap h=R$, then  $R^q,R^{q^2}\in\Nthreestar$. We first show that $R$ is not a T-point.  
Suppose $R=E$, then $\langle R,R^q,R^{q^2}\rangle=\langle E,E^q,E^{q^2}\rangle$, and so the 3-space containing $\N_3$ meets $\si$ in the spread plane  $\Bo{E}=\langle E,E^q,E^{q^2}\rangle\cap\si$. However $\VCBBstar\cap\sistar$ contains exactly the lines $h,h^q,h^{q^2}$, which are not contained in the plane $\langle E,E^q,E^{q^2}\rangle$, a contradiction. So $R\neq E$, and similarly $R\neq (E^{\cpi})^{q^2}$. Hence $R$ is not a T-point.
The point $R$ lies in the T-plane $\lround E\rround_\pi$ which contains the line $h$. It is shown in \cite[Corollary 2.2]{BJW3} that two distinct  T-planes  meet in either $\emptyset$, a T-point, or a T-line, hence $R$ does not lie on an extended spread plane. 
\end{proof}

For the interested reader, we observe that 
in \cite{BJ-ext3}, 
the authors use the notation $g_{\mathbb C}=E^{\cpi} E^q$. In this article, we have 
the line $h=E(E^{\cpi})^{q^2}$. That is, the relationship between these two notations is  $h=g_{\mathbb C}^{q^2}$.  

\subsection{$\Fq$-conics using the usual Bruck-Bose convention}

As noted in Section~\ref{sec:exact}, the usual convention is that the Bruck-Bose representation is \emph{not} necessarily exact-at-infinity. That is, we do not usually include lines or planes contained in $\si$ in our description. 
So, ignoring the linear component of   $\VCBB$ (which lies in $\si$ or an extension of $\si$), we  have  the following result.

 \begin{corollary}\Label{cor:not-exactBB}
 Let $\C$ be an  $\Fq$-conic of $\PG(2,q^3)$. Then  in the Bruck-Bose $\PG(6,q)$ representation, $[\C]$ is a $k$-dim nrc, $k=2$, $3$, $4$ or $6$. 
 \end{corollary}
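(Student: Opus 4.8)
The plan is to read off Corollary~\ref{cor:not-exactBB} as an immediate consequence of the case analysis already completed in Section~\ref{sec4}, together with Theorem~\ref{fqconic-order6}. The essential point is that Result~\ref{Fqconic-cases} exhausts the possibilities for an $\Fq$-conic $\bar\C$ of $\PG(2,q^3)$: either its $\Fq$-subplane $\bar\pi$ is secant, tangent, or exterior to $\li$. In each of these three possibilities, a theorem in Section~\ref{sec4} already describes $\VCBB$ in the exact-at-infinity Bruck-Bose representation as a $k$-dim nrc together with a linear component contained in $\si$ (or an extension of $\si$). Since the usual Bruck-Bose convention discards any part of the configuration that lies in $\si$, the set $[\C]$ is exactly the nrc component, and it remains only to collect the values of $k$ that arise.

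Concretely, I would argue as follows. First, by Theorem~\ref{fqconic-order6}, in the exact-at-infinity representation $\VCBB$ decomposes into a $k$-dim nrc $\N_k$ for some $2\le k\le 6$ together with linear subspaces contained in $\si$ or an extension; under the usual convention we ignore those linear subspaces, so $[\C]=\N_k$. It then suffices to rule out $k=5$ and to show every other value in $\{2,3,4,6\}$ actually occurs. If $\bar\pi$ is secant to $\li$, Theorem~\ref{conic-sec} gives $k=2$ (a non-degenerate conic). If $\bar\pi$ is tangent to $\li$, Theorem~\ref{conic-tgt} gives $k=6$ or $k=4$. If $\bar\pi$ is exterior to $\li$, Theorem~\ref{conic-ext} gives $k=6$ or $k=3$. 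Hence $k\in\{2,3,4,6\}$ in every case, and in particular $k=5$ never occurs; this is precisely the assertion of the corollary.

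Strictly speaking there is nothing left to prove beyond this bookkeeping, so I do not anticipate a genuine obstacle — the work was done in Theorems~\ref{conic-sec}, \ref{conic-tgt} and \ref{conic-ext}. The only mild subtlety is making sure the three cases of Result~\ref{Fqconic-cases} really are mutually exclusive and exhaustive (they are, since $\bar\pi$ is a subplane and $\li$ a line, so their intersection is $\emptyset$, a point, or a subline, corresponding to exterior, tangent, secant respectively), and confirming that in each theorem the component being discarded genuinely lies in $\si$ or an extension of $\si$ rather than being an affine part of the curve — which is exactly what those theorems assert. I would therefore write the proof as a short paragraph invoking Theorems~\ref{conic-sec}, \ref{conic-tgt} and \ref{conic-ext} and noting that their union of possible nrc dimensions is $\{2,3,4,6\}$.

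\begin{proof}
By Result~\ref{Fqconic-cases}, the $\Fq$-subplane $\bar\pi$ containing $\bar\C$ is secant, tangent, or exterior to $\li$. In each case, Theorem~\ref{fqconic-order6} shows that in the exact-at-infinity Bruck-Bose representation in $\PG(6,q)$, the variety $\VCBB$ decomposes into a $k$-dim nrc $\N_k$, for some $2\le k\le 6$, together with lines and planes contained in $\si$ or an extension. Using the usual Bruck-Bose convention, we ignore the linear component contained in $\si$, so $[\C]=\N_k$. It remains to determine the possible values of $k$. If $\bar\pi$ is secant to $\li$, then by Theorem~\ref{conic-sec}, $\N_k$ is a non-degenerate conic, so $k=2$. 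If $\bar\pi$ is tangent to $\li$, then by Theorem~\ref{conic-tgt}, $[\C]$ is a 6-dim nrc or a 4-dim nrc (the latter together with a spread plane at infinity which we ignore), so $k=6$ or $k=4$. If $\bar\pi$ is exterior to $\li$, then by Theorem~\ref{conic-ext}, $[\C]$ is a 6-dim nrc or a 3-dim nrc, so $k=6$ or $k=3$. Hence $[\C]$ is a $k$-dim nrc with $k\in\{2,3,4,6\}$.
\end{proof}
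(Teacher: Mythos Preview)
Your proposal is correct and follows exactly the approach the paper takes: the corollary is stated without a formal proof, immediately after a sentence explaining that by ignoring the linear component of $\VCBB$ at infinity one obtains the result, which is precisely the bookkeeping you carry out by invoking Theorems~\ref{conic-sec}, \ref{conic-tgt} and \ref{conic-ext}.
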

 
  \section{Defining $3$-special normal rational curves in $\PG(6,q)$}\Label{sec-2spec}

 We aim to characterise which nrcs of $\PG(6,q)$ correspond to $\Fq$-conics. To this end, we define the notion of a $3$-special nrc in relation to a regular 2-spread $\S$ in a hyperplane of $\PG(6,q)$. We assume throughout that $q\geq 8$, so that there is a unique 6-dim nrc through any nine points of $\PG(6,q)$, no seven in  a hyperplane.

We define a weight of a point $P\in\sistarstar$, which describes the position of $P$ in relation to the transversal lines of $\S$.
Note that this is different to the notion of weights for linear sets. In \cite{fieldreduction}, the weight of a point in a linear set relates to how the corresponding subspace meets the regular spread $\S$ of $\si$. In this article, we use weight to describe how a point in  $\sistarstar$ sits in relation to the transversal lines of the regular spread $\S$.

\begin{definition}\Label{def-good}
 Let $\S$ be a 2-regular spread in $\PG(5,q)$ embedded in $\PG(5,q^6)$. Let $\sigma\in\PGammaL(6,q^6)$ be the collineation $\,\sigma\colon X=(x_0,\ldots,x_5)\mapsto X^q=(x_0^q,\ldots,x_5^q)$. 
Let $P$ be a point in $\PG(5,q^6)$.

\begin{enumerate}
\item The {\sf weight} of $P$, denoted $w(P)$, is defined to be:
\begin{itemize}
\item $w(P)=1$ if $P$ lies in one of the (extended) transversal lines of $\S$;
\item $w(P)=2$ if $w(P)\neq 1$ and $P$ lies on a line that meets two of the  (extended) transversal lines of $\S$;
\item $w(P)=3$ otherwise. 
\end{itemize}
\item The {\sf orbit-size} of $P$, denoted $o(P)$, is the size of the orbit of $P$ under the collineation $\sigma$. 
\item Let $s(P)=1$ if $P$ lies in an extended plane of $\S$, otherwise $s(P)=2$. 
\item The point $P$ is called an {\sf $\S$-good point} if $$w(P)\times o(P)=3s(P).$$

\end{enumerate}
\end{definition}

Note   that if $P\in\PG(5,q)$, then $o(P)=1$;  if $P\in\PG(5,q^3)\setminus\PG(5,q)$, then $o(P)=3$; 
if $P\in\PG(5,q^6)\setminus\PG(5,q^3)$, 
then either $o(P)=2$ or $o(P)=6$. Moreover, if $P\in\PG(5,q^3)$ lies on one of the transversal lines $g$ of the regular 2-spread $\S$, then $o(P)=3$, and if $P\in \gstar\setminus g$, then $o(P)=6$.

\begin{definition}\Label{def-2spec} Let $\PG(6,q)$ have hyperplane at infinity $\si$ and let $\S$ be a regular 2-spread  in $\si$.  Let $\N_r$, $r\leq 6$, be an $r$-dim nrc of $\PG(6,q)$ not contained in $\si$. The curve $\N_r$ meets $\si$ in $r$ points, denoted   $\{P_1,\ldots,P_r\}$ (possibly repeated or in an extension). We say 
 $\N_r$ is    {\sf $3$-special}  with respect to $\S$ if 
 $w(P_1)+\cdots+w(P_r)=6$ and  each $P_i$ is  $\S$-good. 
\end{definition}

We next look at an $r$-dim $3$-special nrc $\N_r$ and determine the possibilities for $r$.  In each case we describe the points $P_1\ldots,P_r$, noting any relationship with the transversal lines of the regular 2-spread $\S$. 

\begin{theorem}\Label{2spec-h3} Let $\PG(6,q)$ have hyperplane at infinity $\si$ and let $\S$ be a regular 2-spread in $\si$ with transversal lines $g,g^q,g^{q^2}$. Then 
 an  $r$-dim  $3$-special nrc $\N_r$ in $\PG(6,q)$ is one of the following.
\begin{enumerate}
\item $r=6$ and $\Nsixstar\cap\sistar=\{P,P^{q},P^{q^2}, Q,Q^{q},Q^{q^2}\}$ for some $P,Q\in g$, possibly repeated. 
\item $r=6$ and $\Nsixstarstar\cap\sistarstar=\{P,P^{q},P^{q^2}, P^{q^3},P^{q^4},P^{q^5}\}$ for some $P\in \gstar\setminus g$. 

\item $r=4$ and $\N_4\cap\si=\{T\}$ and $\Nfourstar\cap\sistar=\{T,P,P^{q},P^{q^2}\}$, $P\in g$, with $T\notin\langle P,P^q,P^{q^2}\rangle$.
\item $r=3$ and $\Nthreestar\cap\sistar=\{R,R^{q},R^{q^2}\}$ where $R\in XY^{q}$, for some distinct $X,Y\in g$, $R\neq X,Y^q$.
\item $r=2$ and  $\N_2\cap\si=\{P,Q\}$ with $P,Q\in\si$ (possibly repeated). 
\item $r=2$ and  $\Ntwostarstar\cap\sistarstar=\{P,P^{q^3}\}$ with $P\in\sistarstar\setminus\sistar$, $P$ not in an extended transversal line.
 \end{enumerate}
\end{theorem}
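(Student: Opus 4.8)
The plan is to classify the $r$-dim $3$-special nrcs $\N_r$ by a careful case analysis on the possible weights of the $r$ points $P_1,\ldots,P_r$ at infinity, using the two defining conditions of $3$-special: $w(P_1)+\cdots+w(P_r)=6$ and each $P_i$ being $\S$-happy, i.e. $w(P_i)\cdot o(P_i)=3s(P_i)$. The first observation to record is that since $w(P_i)\geq 1$ and the weights sum to $6$, we have $r\leq 6$, and the multiset of weights is one of the partitions of $6$ into at most $6$ parts each of size $1,2$ or $3$: namely $(1,1,1,1,1,1)$, $(2,1,1,1,1)$, $(2,2,1,1)$, $(2,2,2)$, $(3,1,1,1)$, $(3,2,1)$, $(3,3)$. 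These correspond to $r=6,5,4,3,4,3,2$ respectively. I would go through each partition in turn.

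First I would dispose of the weight-$3$ points. A point $P$ with $w(P)=3$ is $\S$-happy iff $3 o(P)=3 s(P)$, i.e. $o(P)=s(P)\in\{1,2\}$; since $o(P)\in\{1,2,3,6\}$ this forces $o(P)=s(P)$. If $o(P)=1$ then $P\in\PG(5,q)=\si$ and $s(P)=1$ says $P$ lies in a spread plane (automatic since $\S$ partitions $\si$) — so $w(P)=3$, $o(P)=1$, $s(P)=1$ is allowed: this is a generic real point of $\si$, contributing to case 5. If $o(P)=2$ then $P\in\sistarstar\setminus\sistar$ with $s(P)=2$ (not in an extended spread plane) and $P^{q^3}=P$ forces the conjugate pair $\{P,P^{q^3}\}$; a weight-$3$ point of orbit size $2$ not in an extended spread plane — this is case 6, but one must check that $\N_2$ meeting $\sistarstar$ in exactly the conjugate pair $\{P,P^{q^3}\}$ is consistent (the line $m=\langle\N_2\rangle\cap\si$ is real and meets $\sistarstar$ in $\{P,P^{q^3}\}$, forcing $P^q\in m$, hence $P^{q^3}=P^q$, which is fine). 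The partition $(3,3)$ with both points of weight $3$ then gives either two real points (inside case 5) or the conjugate pair of case 6; the partition $(3,2,1)$ and $(3,1,1,1)$ I would rule out by a parity/conjugacy argument: weight is a $\sigma$-invariant (the transversal lines are permuted by $\sigma$), so the weights occurring in $\N_r\cap\si$ must be constant on $\sigma$-orbits, and the points of $\N_r\cap\si$ that are not in $\si$ itself come in full $\sigma$-orbits of size $2,3$ or $6$. A single weight-$3$ point together with points of other weights can only occur if the weight-$3$ point is real (orbit size $1$), but then the remaining weights sum to $3$ across $r-1$ points — e.g. $(3,1,1,1)$ needs three weight-$1$ points summing to $3$; weight-$1$ points lie on transversal lines, and a real nrc meeting an extended transversal line $g$ in a point $P$ forces the whole $\sigma$-orbit $\{P,P^q,P^{q^2}\}$ (size $3$), which already uses up weight $3$ and leaves nothing for the weight-$3$ point unless $r$ is larger — contradiction. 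Similarly $(3,2,1)$ is impossible.

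Next the weight-$2$ and weight-$1$ analysis, which is the bulk of the work. A weight-$1$ point $P$ lies on an extended transversal line; if it is real then $o(P)=1$, $w(P)=1$, $\S$-happy forces $s(P)=1$ (automatic) — but a real point on an extended transversal line of a regular spread: the transversal lines $g,g^q,g^{q^2}$ are disjoint from $\si$, so this cannot happen, hence a weight-$1$ point has $o(P)\in\{3,6\}$ and lies on $g$ (orbit size $3$) or on $\gstar\setminus g$ (orbit size $6$); $\S$-happy gives $1\cdot 3=3 s(P)$ so $s(P)=1$ (on $g$, it lies in a spread plane) or $1\cdot 6=3 s(P)$ so $s(P)=2$ (on $\gstar\setminus g$, not in an extended spread plane). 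In the first sub-case the whole orbit $\{P,P^q,P^{q^2}\}\subset\N_r\cap\si$ has weight $1$ each, total weight $3$; combined with a second such orbit $\{Q,Q^q,Q^{q^2}\}$ this is the partition $(1^6)$, $r=6$, giving case 1. In the second sub-case the orbit $\{P,P^q,\ldots,P^{q^5}\}$ already has six weight-$1$ points, forcing $r=6$ and giving case 2. For the partitions involving weight $2$: a weight-$2$ point $P$ lies on a line meeting two extended transversal lines (a T-line) but not on a transversal line itself; $\S$-happy gives $2 o(P)=3 s(P)$, so $o(P)\in\{3,6\}$. If $o(P)=3$ then $s(P)=2$ and $P$ is a real T-point... wait, $o(P)=3$ means $P\in\PG(5,q^3)\setminus\PG(5,q)$, on a line through two transversals, $s(P)=2$; the orbit $\{R,R^q,R^{q^2}\}$ gives three weight-$2$ points, total weight $6$, $r=3$ — this is case 4, with $R\in XY^q$ for $X,Y\in g$. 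The only remaining partitions to handle are $(2,1,1,1,1)$ and $(2,2,1,1)$: a single weight-$2$ orbit would need orbit size compatible with the weight count — a weight-$2$ point of orbit size $3$ uses weight $6$ already ($r=3$, case 4), and orbit size $2$ or $6$ would need $s(P)=2$ with $2\cdot 2=3 s(P)$ impossible and $2\cdot 6=3 s(P)$ impossible; so weight-$2$ points only occur in orbits of size $3$, ruling out $(2,1,1,1,1)$, $(2,2,1,1)$, $(2,2,2)$ entirely. That leaves $r=4$ from the partition $(3,1,1,1)$ reconsidered: the weight-$3$ point is real (orbit size $1$, $\S$-happy, $s=1$) and we need three weight-$1$ points of total weight $3$ — but weight-$1$ points come in orbits of size $3$ or $6$, and size $6$ is too big, so an orbit $\{P,P^q,P^{q^2}\}\subset g$ of weight $1$ each, plus the single real weight-$3$ point $T$: this is exactly case 3, and one checks $T\notin\langle P,P^q,P^{q^2}\rangle$ because if $T$ were in the extended spread plane $\langle P,P^q,P^{q^2}\rangle\cap\si$ then the $4$-space of $\N_4$ would meet $\si$ in a plane, which combined with $\N_4$ being a $4$-dim nrc (no $t+2$ points in a $t$-space) forces $T\notin\langle P,P^q,P^{q^2}\rangle$.

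\textbf{The main obstacle} I anticipate is not the weight bookkeeping — which is finite and mechanical once the key lemma "weight is $\sigma$-invariant, so the multiset $\N_r\cap\si$ is a union of $\sigma$-orbits with constant weight" is established — but rather verifying in each surviving case that the configuration is actually \emph{realizable} as (the infinity-locus of) an $r$-dim nrc, and extracting the precise geometric descriptions stated in the six conclusions (e.g. that in case 3, $T\notin\langle P,P^q,P^{q^2}\rangle$; in case 4, $R$ genuinely lies on a line $XY^q$ with $X,Y\in g$ distinct and $R\ne X,Y^q$; in case 6, $P\notin$ extended transversal line and the pair is $\{P,P^{q^3}\}$). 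For these I would lean on the basic nrc incidence property quoted in Section~\ref{sec-back} (no $t+2$ points of an $r$-dim nrc in a $t$-space), on Lemma~\ref{cpiplus}/\cite[Lemma 3.3]{BJW3} about how planes through transversals intersect, and on the fact that the $r$ points at infinity span the hyperplane section $\langle\N_r\rangle\cap\si$, together with the observation that $\N_r$ not contained in $\si$ plus spanning its $r$-space forces these points to be in "general enough" position. The routine linear-algebra checks (e.g. that a weight-$2$ point forced to have $s(P)=2$ cannot have orbit size $2$ or $6$) I would state without grinding through.
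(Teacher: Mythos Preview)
Your approach is essentially the paper's own: classify $\S$-happy points by weight, observe that the infinity multiset is a union of $\sigma$-orbits with constant weight, and enumerate the ways these orbits can combine so that weights sum to $6$. The paper carries this out by first listing five orbit types (a)--(e) corresponding exactly to your weight-$1$/$2$/$3$ cases, then combining them; you arrive at the same six conclusions.

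Two points of cleanup. First, your exposition contradicts itself twice: you initially ``rule out'' $(3,1,1,1)$ and then correctly reconsider it to obtain case~3, and you write ``ruling out $(2,2,2)$ entirely'' immediately after correctly identifying $(2,2,2)$ as case~4 --- what you mean is that weight-$2$ points only occur in $\sigma$-orbits of size~$3$, which kills $(2,1,1,1,1)$ and $(2,2,1,1)$ but \emph{gives} $(2,2,2)$. Second, your ``main obstacle'' about realizability is not an obstacle at all: the theorem is a classification (if $\N_r$ is $3$-special then it has one of these six shapes), so you never need to exhibit an nrc of each type --- that comes later in Section~\ref{sec-main}. The only geometric check you genuinely need is the $T\notin\langle P,P^q,P^{q^2}\rangle$ clause in case~3, and there the paper's argument is the one you essentially give: if $T$ lay in that plane then a $2$-space would contain four points of the $4$-dim nrc $\N_4$, violating the defining property of an nrc.
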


\begin{proof} 
Let $\N_r$ be an $r$-dim $3$-special nrc in $\PG(6,q)$, and let $\K=\{K_1,\ldots,K_r\}$ denote the   points of $\N_r$ at infinity, possibly repeated, possibly in an extension. As $\N_r$ is fixed by the collineation $\sigma\in\PGammaL(6,q^6)$ with $\sigma\colon X\mapsto X^q$, the set $\K$ is fixed by $\sigma$. That is, if $P\in \K$, then $P^q,\ldots,P^{q^5}\in\K$, possibly repeated, so $o(P)$ divides 6. We consider the possibilities for $\S$-good  points in $\K$.

Suppose $A\in\K$ has $w(A)=1$. So without loss of generality, either $A\in g$, in which case $s(A)=1$ and $o(A)=3$; or $A\in \gstar\setminus g$, in which case $s(A)=2$ and $o(A)=6$. Both these possibilities satisfy $w(A)o(A)=3s(A)$, so   both give $\S$-good points. 
Suppose $B\in\K$ has $w(B)=2$. The point $B$ is $\S$-good if $2o(B)=3s(B)$, that is,  if $s(B)=2$ and $o(B)=3$. As $s(B)=2$, without loss of generality we have  $B$ is $\S$-good if $B\in XY^q$ for some $X\neq Y\in g$, $B\neq X,Y^q$
Suppose $C\in\K$ has $w(C)=3$. The point $C$ is $\S$-good if  $3o(C)=3s(C)$, so there are two possibilities.  Firstly, if $o(C)=s(C)=1$, then   $C\in\PG(5,q)$. Secondly,  if $o(C)=s(C)=2$, then $C\in \PG(5,q^6)\setminus \PG(5,q^3)$, $C$ is not in an extended transversal line, and the orbit of $C$ is $\{C, C^q\}$.  
In summary, we have  five possible point orbits for an $\S$-good point in $\K$:
\begin{enumerate}
\item[(a)] $\langle  A\rangle=\{A,A^q,A^{q^2}\}\subset\K$, $w(A)=w(A^q)=w(A^{q^2})=1$, $A\in g$;
\item[(b)] $\langle  A\rangle=\{A,\ldots,A^{q^5}\}\subset\K$,  $w(A)=\cdots=w(A^{q^5})=1$,  $A\in \gstar\setminus g$;
\item[(c)]  $ \langle  B\rangle=\{B,
B^q,B^{q^2}\}\subset\K$, $w(B)=w(B^q)=w(B^{q^2})=2$, $B\in XY^q$ for some $X,Y\in g$, $X\neq Y$, $B\neq X,Y^q$; 
\item[(d)] $\langle  C\rangle=\{C\}\subset\K$, $w(C)=3$, $C\in\PG(5,q)$; 
\item[(e)] $\langle  C\rangle=\{C,C^q\}\subset\K$, $w(C)=w(C^q)=3$, $C\in \PG(5,q^6)\setminus \PG(5,q^3)$, $C$ is not in an extended transversal line. 
\end{enumerate}

We now consider how we can combine these possible orbits of $\S$-good points to satisfy $w(K_1)+\cdots+w(K_r)=6$. 
If $r=6$, then the six points $K_1,\ldots,K_6$ are either two sets of type (a) orbits,  giving case 1 of the result;  or one   type (b) orbit, giving case 2 of the result. If $r=5$, then there is no way to combine orbits of points of type  (a),\ldots,(e)  to get five points whose weights sum to 6.
If $r=4$, then the only way to combine orbits of type  (a),\ldots,(e)  to get four points whose weights sum to 6 gives $\K=\{T,P,P^q,P^{q^2}\}$, with  $T$ type (d), and $P$ type  (a). If $T\in\langle P,P^q,P^{q^2}\rangle$, then  
the plane $\langle P,P^q,P^{q^2}\rangle$ meets $\N_4$ in four points, contradicting $\N_4$ being a 4-dim nrc.  This gives case 3 of the result. 
If $r=3$, then we need an orbit of type (c), giving case 4 of the result. 
If $r=2$, then there are two possibilities: either two points of type (d), giving case 5 of the result; or one orbit of type (e),  
 giving case 6 of the result. 
The case $r=1$ cannot occur as no point has weight 6. Hence the six cases outlined in the statement of the result are the only possibilities. 
\end{proof}

 \section{Characterising $\Fq$-conics in the Bruck-Bose representation }\Label{sec-main}
 
 The main result of this section is Theorem~\ref{thm-main}, which gives one unifying characterisation of the nrcs of $\PG(6,q)$ that correspond to $\Fq$-conics as the $3$-special nrcs.   For the remainder of this section,   $[\C]$ is a $k$-dim nrc of $\PG(6,q)$,  and we denote the unique extension of this $k$-dim nrc to 
a $k$-dim nrc of $\PG(6,q^3)$  and  $\PG(6,q^6)$ by $[\C]\star$ and $[\C]\blackstar$ respectively.

  \subsection{$\Fq$-conics} 

First we prove that an nrc of $\PG(6,q)$ corresponding to an $\Fq$-conic is $3$-special in the sense of Definition~\ref{def-2spec}.

\begin{lemma}\Label{conic-quartic-3}
Let $\bar \C$ be an $\Fq$-conic of $\PG(2,q^3)$. 
Then  in the Bruck-Bose $\PG(6,q)$ representation, $[\C]$ is a $3$-special nrc. 
\end{lemma}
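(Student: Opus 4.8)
The plan is to verify directly that for each of the cases of an $\Fq$-conic, the points at infinity of $[\C]$ match one of the configurations in Theorem~\ref{2spec-h3}, and that each such point is $\S$-happy. Since Corollary~\ref{cor:not-exactBB} tells us $[\C]$ is a $k$-dim nrc with $k\in\{2,3,4,6\}$, and Corollaries~\ref{cor:sec}, \ref{cor:tgt} and \ref{cor:ext} describe in full detail how $[\C]\star$ (or $[\C]\blackstar$) meets $\sistar$ (or $\sistarstar$) in each of the three cases (subplane secant, tangent, exterior to $\li$), the work is essentially bookkeeping: I would run through the eleven cases of Result~\ref{Fqconic-cases}, read off the points at infinity from the relevant corollary, compute their weights $w$ and orbit-sizes $o$ and the invariant $s$, and check $w\cdot o = 3s$ and that the weights sum to $6$.

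Concretely, I would organise the proof as follows. First, the secant case: by Corollary~\ref{cor:sec}, in cases \SI/\SII we get a non-degenerate conic $\N_2$ meeting $\si$ in two real points $[P]\cap[\pi]$, $[Q]\cap[\pi]$, each lying in a spread plane, hence each of weight $3$, orbit-size $1$, $s=1$, so happy, and $w$-sum $=6$; this is case 5 of Theorem~\ref{2spec-h3}. In case \SIII, Corollary~\ref{cor:sec}(2) says $\N_2$ meets $\sistarstar$ in two points $K,K^{q^3}=K^q$ with $K$ not on a T-line, so $w(K)=3$, $o(K)=2$, $s(K)=2$, hence happy; this is case 6. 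Second, the tangent case via Corollary~\ref{cor:tgt}: in case \TI, $[\C]$ is a $4$-dim nrc meeting $\sistar$ in a real point of $[T]$ (weight $3$, $o=1$, $s=1$) together with $Q,Q^q,Q^{q^2}$ where $Q\in g$ (weight $1$, $o=3$, $s=1$), and $T\notin\langle P,P^q,P^{q^2}\rangle$ — this is case 3, with $w$-sum $3+1+1+1=6$. In the remaining tangent cases \TII/\TIII/\TIV and the exterior cases \EII/\EIII/\EIV, $[\C]$ is a $6$-dim nrc meeting $\sistar$ in $\{P,P^q,P^{q^2},Q,Q^q,Q^{q^2}\}$ with $P,Q\in g$ (possibly repeated) — this is case 1, all six points of weight $1$, $o=3$, $s=1$ — or in $\{P,\dots,P^{q^5}\}$ with $P\in\gstar\setminus g$, $Q=P^{q^3}$ — this is case 2, weight $1$, $o=6$, $s=2$. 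Finally, in case \EI, Corollary~\ref{cor:ext}(1) gives a $3$-dim nrc meeting $\sistar$ in $\{R,R^q,R^{q^2}\}$ with $R\in h=E(E^{\cpi})^{q^2}$, $R$ not a T-point and not on an extended spread plane; since $h$ is a T-line (joining $g$ and $g^{q^2}$), $w(R)=2$, $o(R)=3$, $s(R)=2$, so happy, and $w$-sum $=6$ — this is case 4.

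The only genuinely non-mechanical point is checking, in each case, the two side conditions that distinguish the happy configurations from near-misses: that the point $T$ in the $\TI$ case does not lie in the extended spread plane $\langle P,P^q,P^{q^2}\rangle$ (needed so $o(T)=1$, $s(T)=1$ is consistent with $\N_4$ being an nrc), and that the point $R$ in the $\EI$ case is neither a T-point nor on an extended spread plane (so that $w(R)=2$ rather than $1$, and $s(R)=2$ rather than $1$). Both of these are precisely the extra assertions already proved in Corollaries~\ref{cor:tgt}(1) and \ref{cor:ext}(1), so I can simply cite them. The main obstacle — such as it is — is just marshalling the case analysis cleanly so that the reader sees each of the eleven Result~\ref{Fqconic-cases} possibilities landing in exactly one of the six Theorem~\ref{2spec-h3} possibilities, and confirming the weight sum is $6$ each time; there is no new geometric content, only the observation that the descriptions already obtained are literally the definition of $3$-special.

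\begin{proof}
Let $\bar\C$ be an $\Fq$-conic of $\PG(2,q^3)$. By Corollary~\ref{cor:not-exactBB}, $[\C]$ is a $k$-dim nrc with $k\in\{2,3,4,6\}$. We show in each case that the points of $[\C]\star$ (or $[\C]\blackstar$) in $\sistar$ (respectively $\sistarstar$) have weights summing to $6$ and are each $\S$-happy, by matching against the list in Theorem~\ref{2spec-h3}. We use the case labels of Result~\ref{Fqconic-cases}.

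\textbf{$\bar\pi$ secant to $\li$.} By Corollary~\ref{cor:sec}, $[\C]$ decomposes as a non-degenerate conic $\N_2$ together with two planes at infinity. In cases \SI and \SII, $\N_2\cap\si$ consists of two real points, one in the spread plane $[P]$ and one in $[Q]$. A point lying in a spread plane has $w=3$, $o=1$ and $s=1$, hence is $\S$-happy, and the weights sum to $3+3=6$; this is case 5 of Theorem~\ref{2spec-h3}. In case \SIII, by Corollary~\ref{cor:sec}(2) the curve $\Ntwostarstar$ meets $\sistarstar$ in two points $K,K^{q}$ with $K^{q}=K^{q^3}$ and $K\in\sistarstar\setminus\sistar$ not on a T-line; thus $w(K)=3$, $o(K)=2$ and $s(K)=2$, so $w(K)o(K)=6=3s(K)$ and $K$ is $\S$-happy, with weight sum $3+3=6$; this is case 6 of Theorem~\ref{2spec-h3}.

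\textbf{$\bar\pi$ tangent to $\li$.} Let $\bar T=\bar\pi\cap\li$. In case \TI, Corollary~\ref{cor:tgt}(1) gives that $[\C]$ is a $4$-dim nrc $\N_4$ with $\Nfourstar\cap\sistar$ consisting of one real point of the spread plane $[T]$ together with $Q,Q^{q},Q^{q^2}$ where $Q\in g$, and moreover $T\notin\langle P,P^q,P^{q^2}\rangle$. The point of $[T]$ has $w=3$, $o=1$, $s=1$, hence is $\S$-happy; each of $Q,Q^q,Q^{q^2}$ lies on the transversal line $g$, so has $w=1$, $o=3$, $s=1$, hence is $\S$-happy; the weights sum to $3+1+1+1=6$; this is case 3 of Theorem~\ref{2spec-h3}. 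In cases \TII, \TIII and \TIV, Corollary~\ref{cor:tgt}(2) gives that $[\C]$ is a $6$-dim nrc $\N_6$, and $\N_6\star$ meets $\sistar$ in six points $\{P,P^{q},P^{q^2},Q,Q^{q},Q^{q^2}\}$ with $P,Q\in g$ (possibly repeated) in cases \TII and \TIII, and in six points $\{P,P^q,P^{q^2},P^{q^3},P^{q^4},P^{q^5}\}$ with $P\in\gstar\setminus g$ and $Q=P^{q^3}$ in case \TIV. In the first situation each point lies on a transversal line, so $w=1$, $o=3$, $s=1$, giving case 1 of Theorem~\ref{2spec-h3}; in the second each point lies on a transversal line with $w=1$, $o=6$, $s=2$, giving case 2. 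In all cases each point is $\S$-happy and the weights sum to $6$.

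\textbf{$\bar\pi$ exterior to $\li$.} Let $\bar E,\bar E^{\ocpi}\in\li$ be $(\bar\pi,\li)$-carriers of $\bar\pi$. In case \EI, Corollary~\ref{cor:ext}(1) gives that $[\C]$ is a $3$-dim nrc $\N_3$ with $\VCBBstar\cap\sistar$ equal to the three T-lines $h=E(E^{\cpi})^{q^2}$, $h^q$, $h^{q^2}$, and $\Nthreestar\cap\sistar=\{R,R^q,R^{q^2}\}$ with $R\in h$, $R$ not a T-point and not on an extended spread plane. Since $h$ meets $g$ and $g^{q^2}$ and $R$ is not on a transversal line, $w(R)=2$; since $R$ is not on an extended spread plane, $s(R)=2$; and $o(R)=3$. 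Hence $w(R)o(R)=6=3s(R)$, so $R$ (and likewise $R^q$, $R^{q^2}$) is $\S$-happy, with weight sum $2+2+2=6$; this is case 4 of Theorem~\ref{2spec-h3}. In cases \EII, \EIII and \EIV, Corollary~\ref{cor:ext}(2) gives that $[\C]$ is a $6$-dim nrc $\N_6$ whose extension meets $\sistar$ in $\{P,P^q,P^{q^2},Q,Q^q,Q^{q^2}\}$ with $P,Q\in g$ (possibly repeated) or in $\gstar\setminus g$ with $Q=P^{q^3}$; exactly as in the tangent case this is case 1 or case 2 of Theorem~\ref{2spec-h3}, with all points $\S$-happy and weight sum $6$.

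In every case the points of $[\C]$ at infinity have weights summing to $6$ and are each $\S$-happy, so by Definition~\ref{def-2spec} the nrc $[\C]$ is $3$-special.
\end{proof}
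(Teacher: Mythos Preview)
Your proposal is correct and follows essentially the same approach as the paper's own proof: a case-by-case run through the secant/tangent/exterior subplane possibilities, citing Corollaries~\ref{cor:sec}, \ref{cor:tgt}, \ref{cor:ext} and \ref{cor:not-exactBB} to read off the points of $[\C]$ at infinity, then directly computing $w$, $o$, $s$ for each to verify $\S$-happiness and weight sum $6$. Your case labelling is in fact slightly cleaner than the paper's (which conflates some of the sub-cases), and your explicit cross-referencing to the six configurations of Theorem~\ref{2spec-h3} is a helpful addition; the only blemish is the typo in case \TI\ where you write $\langle P,P^q,P^{q^2}\rangle$ but mean $\langle Q,Q^q,Q^{q^2}\rangle$.
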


\begin{proof} 
Let $\bar \C$ be an $\Fq$-conic in the $\Fq$-subplane $\bar 
\pi$ of $\PG(2,q^3)$.  We consider the three cases where $\bar \pi$ is 
secant, tangent and exterior to $\li$ separately. We label the cases to be consistent with  Result~\ref{Fqconic-cases}.

First suppose that $\li$ is a secant of $\bar\pi$. There are two 
cases to consider, depending on whether $\bar\C$ contains a point 
of $\li$.
Case \SI. If $\bar\C\cap\li=\{\bar P,\bar 
Q\}\in\bar\pi$ (possibly $\bar P=\bar Q$), then by 
 Corollary~\ref{cor:sec}   and Corollary~\ref{cor:not-exactBB}, $[\C]$ is a non-degenerate 
conic $\N_2$ that meets $\si$ in two real points, $X\in[P]$ and 
$Y\in[Q]$, possibly $X=Y$. We have $w(X)=w(Y)=3$, $o(X)=o(Y)=1$ and 
$s(X)=s(Y)=1$. Hence $X$ and $Y$ are $\S$-good and their weights sum to six, 
so $\N_2$ is  a $3$-special 2-dim nrc.

Case \SII. If $\bar\C\cap\li=\emptyset$,  by 
 Corollary~\ref{cor:sec}  and Corollary~\ref{cor:not-exactBB},
    $[\C]$ is a non-degenerate conic $\N_2$ whose extension meets $\si$ in two 
points  $K,K^q\in\sistarstar\setminus\sistar$. Further, $K,K^q$ are not on a transversal line of $\S$, or on a line that meets two transversals of $\S$. Hence    $w(K)=w(K^q)=3$. As $K,K^q\notin\sistar$, they do not lie in an extended spread plane and so $s(K)=s(K^q)=2$.
As $K,K^q$ lie on a line in the extension of $\N_2$, they lie in a quadratic extension of $\si$, so over $\sistarstar$, they have orbit size 2. Hence $K,K^q$ are $\S$-good, and their weights sum to six, so $\N_2$ is  
a $3$-special 2-dim nrc.

Now suppose  $\li$ is a tangent of $\bar\pi$, there are three cases to consider.  
Case \TI. Suppose  $\bar T=\li\cap\bar\pi\in\bar\C$. As $\li$ is not a line of $\bar\pi$, $\li$ is not a tangent of $\bar \C$. Hence   $\bar\Cplus\cap\li=\{\bar T,\bar L\}$ for some $\bar L\neq\bar T$. Then  by  Corollary~\ref{cor:tgt}   and Corollary~\ref{cor:not-exactBB},  $[\C]$ is a 4-dim nrc $\N_4$ which meets $\si$ in one real point  $Z\in[T]$ and   three points $L, L^{q}, L^{q^2}$ for some $L\in g$. 
We have $w(Z)=3$, $o(Z)=1$, $s(Z)=1$, $w(L)=1$, $o(L)=3$, $s(L)=1$, so each of the four points are $\S$-good, and their weights sum to six. Hence 
 $\N_4$ is a $3$-special 4-dim nrc. 
 
Case \TII.  Suppose $\li\cap\bar\pi\notin\bar\C$ and $\bar\Cplus\cap\li=\{\bar P,\bar Q\}$, possibly repeated. Then   by  Corollary~\ref{cor:tgt}   and Corollary~\ref{cor:not-exactBB},    $[\C]$ is  6-dim nrc  $\N_6$ whose extension  meets $\si$ 
in  points $\{P,P^{q},P^{q^2},Q,Q^{q},Q^{q^2}\}$, where $P,Q\in g$ (possibly $P=Q$). 
For $i=1,2,3$ we have $w (P^{q^i})=w (Q^{q^i})=1, $ $o(P^{q^i})=o(Q^{q^i})=3$ and $s(P^{q^i})=s(Q^{q^i})=1$.  Hence each of the six points are $\S$-good, and the weights sum to six. 
Hence 
  $\N_6$ is a $3$-special 6-dim nrc. 
  
Case \TIII. Suppose $\li\cap\bar\pi\notin\bar\C$ and $\bar\Cplus\cap\li=\emptyset$, then $\barCplusplus\cap\liplusplus=\{\bar P,\bar {Q}\}$. 
By  Corollary~\ref{cor:tgt}   and Corollary~\ref{cor:not-exactBB},    $[\C]$ is  6-dim nrc  $\N_6$ whose extension  meets $\si$ in the points $\{P,P^{q},P^{q^2},P^{q^3},P^{q^4},P^{q^5}\}$, where $P\in\gstar\setminus g$.
We have $w (P^{q^i})=1, $ $o(P^{q^i})=6$ and $s(P^{q^i})=2$ for $i=1,\ldots,6$. Hence the weights sum to six, and each $P^{q^i}$ is $\S$-good. Hence 
  $\N_6$ is a $3$-special 6-dim nrc.

Finally, suppose $\li$ is exterior to $\bar\pi$.   
Denote the $(\bar\pi,\li)$-carriers of $\bar\pi$ by $\bar E,\bar E^{\ocpi}\in\li$ and $\bar E^{\ocpis}\notin\li$. There are three cases to consider, depending on how the $\Fqqq$-conic $\bar\Cplus$ meets $\li$.

Case \EI.  Suppose $\bar\Cplus\cap\li=\{\bar E,\bar E^{\ocpi}\}$, then by  Corollary~\ref{cor:ext}   and Corollary~\ref{cor:not-exactBB}, $[\C]$ is a 3-dim nrc $\N_3$ that meets $\si$ in three points, $R,R^q,R^{q^2}\in\sistar\setminus\si$ with $R\in E(E^{\cpi})^{q}$, $R\neq E,E^{\mathsf c_{\pi}}$, $R$ not a T-point and $R$ not in an extended spread plane. So $w(R)=2$, $o(R)=3$ and $s(R)=2$, 
 hence $R$ and similarly $R^q,R^{q^2}$ are $\S$-good. So $\N_3$ is $3$-special. 

Case \EII.  Suppose $\bar\Cplus\cap\li=\{\bar P,\bar Q\}\neq\{\bar E,\bar E^{\ocpi}\}$, (possibly $\bar P=\bar Q$). Then by  Corollary~\ref{cor:ext}   and Corollary~\ref{cor:not-exactBB}, $[\C]$ is a 6-dim nrc $\N_6$, whose extension meets  $\si$ in the six points $\{P,P^{q},P^{q^2},Q,Q^{q},Q^{q^2}\}$ where $P,Q\in g$ (possibly repeated). For $i=1,2,3$ we have $w (P^{q^i})=w (Q^{q^i})=1, $ $o(P^{q^i})=o(Q^{q^i})=3$ and $s(P^{q^i})=s(Q^{q^i})=1$.  Hence each of the six points are $\S$-good, and the weights sum to six. 
Hence 
  $\N_6$ is a $3$-special 6-dim nrc. 
  
  Case \EIII.  Suppose $\bar\Cplus\cap\li=\emptyset$, so $\barCplusplus\cap\liplusplus=\{\bar P,\bar Q\}$. By  Corollary~\ref{cor:ext}  and Corollary~\ref{cor:not-exactBB}, $[\C]$ is a 6-dim nrc $\N_6$, whose extension meets  $\si$ in the six points  $\{P,P^{q},P^{q^2},P^{q^3},P^{q^4},P^{q^5}\}$, where $P\in\gstar\setminus g$. 
We have $w (P^{q^i})=1, $ $o(P^{q^i})=6$ and $s(P^{q^i})=2$ for $i=1,\ldots,6$. Hence the weights sum to six, and each $P^{q^i}$ is $\S$-good. Hence 
  $\N_6$ is a $3$-special 6-dim nrc. 
\end{proof}

\subsection{$3$-special nrcs}

If $\N_r$ is a $3$-special nrc in $\PG(6,q)$, then by Theorem~\ref{2spec-h3} there are four possibilities for $r$, namely $r=2,3,4,6$. 
 The next four lemmas look at each of these possibilities, and show that in each case $\N_r$  corresponds to an $\Fq$-conic in the corresponding Bruck-Bose plane $\PG(2,q^3)$.

\begin{lemma}\Label{l1}  In $\PG(6,q)$, let $\S$ be a regular 2-spread in the hyperplane at infinity $\si$. Let $\N_6$ be a $3$-special 6-dim nrc in $\PG(6,q)$, then $\N_6$ corresponds  to an $\Fq$-conic of  $\PG(2,q^3)$. 
%
\end{lemma}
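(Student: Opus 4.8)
### Proof Proposal

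The plan is to reverse-engineer the Bruck--Bose plane from $\N_6$, build a candidate $\Fq$-conic in it, and then invoke the direct results of Section~\ref{sec4} to confirm the match. By Theorem~\ref{2spec-h3}, a $3$-special $6$-dim nrc $\N_6$ falls into exactly two cases: either $\Nsixstar\cap\sistar=\{P,P^q,P^{q^2},Q,Q^q,Q^{q^2}\}$ with $P,Q\in g$ (possibly repeated), or $\Nsixstarstar\cap\sistarstar=\{P,P^q,\ldots,P^{q^5}\}$ with $P\in\gstar\setminus g$. In the Bruck--Bose correspondence these six points at infinity determine (at most) two points $\bar P,\bar Q$ of $\li\subset\PG(2,q^3)$ via the $1$--$1$ correspondence $\bar P\in\li \leftrightarrow P\in g$; in the second case the point $P\in\gstar\setminus g$ does not itself correspond to a point of $\li$, which is consistent with $\bar\Cplus$ being exterior to $\li$. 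So the first step is to translate the intersection data of $\N_6$ with $\si$ into a prescribed pair of (possibly imaginary) points on $\li$.

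Next I would reconstruct the $\Fq$-subplane. The affine part of $\N_6$ is a set of $q+1$ points of $\PG(6,q)\setminus\si$ with no three collinear (indeed no $t+2$ in a $t$-space). I would take the $q+1$ affine points of $\N_6$, map them to $q+1$ points of $\PG(2,q^3)$, and show they lie on a non-degenerate conic of some $\Fq$-subplane $\bar\pi$. The cleanest route is to work in the Bose representation $\PG(8,q)$ using the embedding $\IBB=\IB\cap\Sigma_{6,q}$: lift $\N_6$ (together with its linear component at infinity, recovered from the $w(P_i)=1$ data, which in each case builds the three $\pi$-scroll planes meeting $\sistar$ in the points $P^{q^i}$) to the variety $\VC=\V^6_3$ of $\PG(8,q)$ prescribed by Result~\ref{conic-subplane-B}. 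Since a $\V^6_3$ of the relevant type is the intersection of nine quadrics and is determined by enough of its points, and since $\N_6$ spans $\Sigma_{6,q}$ hence $\VC$ spans $\PG(8,q)$, the uniqueness statement in Result~\ref{conic-subplane-B} should force $\VC$ to be the Bose image $\Bo{\C}$ of an $\Fq$-conic $\bar\C$. Then Corollary~\ref{cor:tgt} and Corollary~\ref{cor:ext} (applied to that $\bar\C$) produce a $6$-dim nrc with exactly the prescribed intersection with $\si$; by the uniqueness of the $6$-dim nrc through nine points (no seven in a hyperplane, using $q\geq 8$), that nrc equals $\N_6$.

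Concretely, the argument I would write is: (1) from the $\S$-happy data pick $\bar P,\bar Q\in\li$ (or the conjugate pair in $\liplusplus$); (2) observe that the affine span condition on $\N_6$ forces the corresponding point set of $\PG(2,q^3)$ to span $\PG(2,q^3)$, so it is not contained in a proper subplane trivially, but rather lies on a unique conic-type variety; (3) use the Bose-lift and Result~\ref{conic-subplane-B} to identify this variety as $\Bo{\C}$ for an $\Fq$-conic $\bar\C$ in an appropriate $\Fq$-subplane $\bar\pi$ (tangent or exterior to $\li$, according to which subcase of Theorem~\ref{2spec-h3} we are in); (4) apply Corollary~\ref{cor:tgt}/\ref{cor:ext} and the nrc-uniqueness to conclude $[\C]=\N_6$.

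The main obstacle is step (3): showing that the $q+1$ affine points together with the prescribed behaviour at infinity actually arise from an $\Fq$-conic rather than merely from some arc with the same intersection pattern. This is essentially an \emph{interpolation/rigidity} argument — one must show the Bose-lifted $\V^6_3$ (nine quadrics) is pinned down by the nrc plus its scroll-plane component at infinity, and that the resulting variety satisfies the hypotheses of Result~\ref{conic-subplane-B}. I expect this to require a careful count of degrees of freedom (a $\V^6_3$ of this family, the dimension of the space of such varieties, versus the number of conditions imposed by $q+1$ points and three scroll planes) together with the $q\geq 8$ hypothesis to kill degenerate alternatives; the conjugacy constraints ($\sigma$-invariance of the point set at infinity, i.e. the orbit structure forced by $3$-speciality) should supply exactly the rigidity needed.
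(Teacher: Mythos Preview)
Your plan has a genuine gap at step~(3), and you have correctly flagged it yourself as ``the main obstacle'' --- but the proposed resolution (a degrees-of-freedom count for $\V^6_3$'s) is speculative and is not supported by anything in the paper. Two concrete problems. First, your ``Bose-lift'' is circular: to form $\pi$-scroll planes $\lround X\rround_\pi$ you already need the $\Fq$-subplane $\bar\pi$, which is precisely the object you are trying to produce. Without $\bar\pi$ in hand there is no canonical way to glue a linear component at infinity to $\N_6$ so as to obtain a candidate $\V^6_3$ in $\PG(8,q)$. Second, Result~\ref{conic-subplane-B} is a one-way statement: it says that an $\Fq$-conic yields a $\V^6_3$, not that every $\V^6_3$ arises this way. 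So even if you managed a lift, you would still have no characterisation to invoke.

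The paper's route is more elementary and avoids both issues. It picks three affine points $[A],[B],[C]$ on $\N_6$ (chosen so that in $\PG(2,q^3)$ the five points $\bar A,\bar B,\bar C,\bar X,\bar Y$ --- with $\bar X,\bar Y$ the points on $\li$ or $\liplusplus$ dictated by $\Nsixstar\cap g$ --- are no three collinear); these five points determine a unique $\Fqqq$-conic $\bar\O$, and inside $\bar\O$ there is a unique $\Fq$-conic $\bar\C$ through $\bar A,\bar B,\bar C$ (cf.\ \cite[Lemma~5.1]{BJW1}). Now one runs a case analysis on whether the $\Fq$-subplane $\bar\pi\supset\bar\C$ is secant, tangent, or exterior to $\li$, and on whether $\bar X,\bar Y$ happen to be carriers or lie in $\bar\pi$. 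In each bad case the forward results (Corollaries~\ref{cor:sec}, \ref{cor:tgt}, \ref{cor:ext}) force $[\C]$ to be a $k$-dim nrc with $k<6$, or to sit in a small subspace, and one derives a contradiction with $\N_6$ being a $6$-dim nrc (typically: too many points of $\Nsixstar$ land in a $4$- or $5$-space). In the two surviving cases $[\C]$ is itself a $6$-dim nrc sharing nine points with $\N_6$, and \cite[Theorem~21.1.1]{H2} gives $[\C]=\N_6$. The key idea you are missing is this \emph{construct-then-eliminate} step: build a specific candidate $\Fq$-conic from a handful of points, and let the forward corollaries plus nrc-uniqueness do the matching.
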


\begin{proof} In $\PG(6,q)$, let $\S$ be a regular 2-spread in the hyperplane at infinity $\si$, and let $g,g^q,g^{q^2}$ be the three transversal lines of $\S$. 
%
 Let $\N_6$ be a $3$-special 6-dim nrc in $\PG(6,q)$, then by Theorem~\ref{2spec-h3}, parts 1 and 2,  $\N_6$ meets $\si$ in six points $\{X,X^{q},X^{q^2},\, Y,Y^{q}, Y^{q^2}\}$, and either (i) $X=Y\in g$, (ii) $X,Y \in g$ are distinct, or (iii)   $X,Y\in\gstar\setminus g$ and $Y=X^{q^{3}}$. 
The points $X,Y$ correspond in $\PG(2,q^3)$ to points  on $\li$ denoted $\bar X,\bar Y$ respectively, and either (i) $\bar X=\bar Y\in \li$, (ii) $\bar X,\bar Y \in \li$ are distinct, or (iii)  $\bar X,\bar Y$ 
  lie in the quadratic extension $\PG(2,q^6)$ of $\PG(2,q^{3})$, and they are conjugate with respect to this quadratic extension. 
  
We first show that we can choose three affine points $[A], [B],[C]$ in $\N_6$ so that in $\PG(2,q^3)$, the corresponding points $\{\bar A,\bar B,\bar C,\bar X,\bar Y\}$ are no three collinear. 
Let $[A], [B],[C]\in \N_6$, so $[A],[B],[C]$ are not collinear, and by Theorem~\ref{2spec-h3}, they are not in $\si$.
Hence  $[\alpha]=\langle\, [A],[B],[C]\,\rangle$ is a plane that meets $\si$ in a line. 
 Suppose 
 the line $[\alpha]\cap\si$ is contained  in a  plane $[Z]$ of the regular 2-spread $\S$. As $\N_6$ is a 6-dim nrc, there is an affine point $C'$ lying in $\N_6$ which is not in the 3-space $\langle\,[\alpha],[Z]\,\rangle$. Hence the plane $\langle\, [A],[B],[C']\,\rangle$ meets $\si$ in a line which is not contained in a spread plane. 
That is,  we can choose three affine points  $[A],[B],[C]$ in $\N_6$ so that the plane $[\alpha]=\langle\, [A],[B],[C]\,\rangle$ meets $\si$ in a line which is not contained in a spread plane. By  Result~\ref{BB-Baer-3}, 
in $\PG(2,q^3)$, $\bar \alpha$ is  an $\Fq$-subplane secant to $\li$. 
Suppose that  the three points $\bar A,\bar B, \bar C\in\bar \alpha$ are collinear in $\PG(2,q^3)$, then they lie on an $\Fq$-subline of $\bar \alpha$. Hence by  Result~\ref{BB-Baer-3}, in $\PG(6,q)$  the points $[A],[B],[C]$ are collinear, a contradiction. Hence the three affine points $\bar A,\bar B, \bar C$ of $\PG(2,q^3)$ are not collinear. 

We now show that the two points $\bar X,\bar Y$ are not in the secant $\Fq$-subplane $\bar \alpha$.   Suppose $\bar X\in\bar\alpha$, so $\bar X\in\PG(2,q^3)$. Then by  Result~\ref{BB-Baer-3},  in $\PG(6,q)$,  
$[\alpha]$ meets the spread plane  $[X]$ in a point. Hence $\langle\, [X],[\alpha]\,\rangle$ is a  4-space, which contains  six points of $\N_6$, namely $\{[A],[B],[C],X,X^{q},X^{q^2}\}$, contradicting $\N_6$ being a nrc. Hence $\bar X\notin\bar \alpha$ and similarly $\bar Y\notin\bar \alpha$.
That is, in $\PG(2,q^3)$,  
$\bar \alpha$ is an $\Fq$-subplane  secant to $\li$, $\bar A,\bar B, \bar C$ are non-collinear affine points of $\bar\alpha$, and  $\bar X,\bar Y$ are points on the line at infinity which are not in $\bar \alpha$. Thus the five points $\bar A,\bar B,\bar C,\bar X,\bar Y$ are no three collinear. 

We now show that in each of  the three possible cases (outlined above)  for the points $\bar X,\bar Y$, the five points $\bar A,\bar B,\bar C,\bar X,\bar Y$ lie on a unique $\Fqqq$-conic of $\PG(2,q^3)$. 
 Case (i) suppose  $\bar X=\bar Y$, then the four points $\bar A,\bar B,\bar C,\bar X$ are no three collinear, hence they    lie in a unique $\Fqqq$-conic of $\PG(2,q^3)$ that is tangent to $\li$ at the point $\bar X$. 
Case (ii) suppose   $\bar X,\bar Y\in\li$ are distinct, then as
 $\bar A,\bar B,\bar C,\bar X,\bar Y$  are five points of $\PG(2,q^3)$,  no three collinear, they   lie in a unique $\Fqqq$-conic  of $\PG(2,q^3)$. 
 Case (iii) suppose  $\bar X,\bar Y$ lie in $\PG(2,q^6)\setminus\PG(2,q^3)$, then  
 the five points $\bar A,\bar B,\bar C,\bar X,\bar Y$    lie in a unique $\mathbb F_{q^6}$-conic of $\PG(2,q^6)$. As the points $\bar X,\bar Y$ are conjugate with respect to the quadratic extension from $\PG(2,q^3)$ to $\PG(2,q^6)$, 
 this $\mathbb F_{q^6}$-conic meets $\PG(2,q^3)$ in an $\Fqqq$-conic of  $\PG(2,q^3)$.
Thus in each case, the five points $\{\bar A,\bar B,\bar C,\bar X,\bar Y\}$ lie on a unique $\Fqqq$-conic of $\PG(2,q^3)$ which we denote $\bar \O$. 
 
Similar to \cite[Lemma 5.1]{BJW1}, we can show that the three points $\bar A,\bar B, \bar C$ lie on a unique $\Fq$-conic $\bar\C$ that is contained in $\bar \O$ (that is, $\bar\Cplus=\bar\O$). Denote the $\Fq$-subplane containing $\bar\C$ by $\bar\pi$. 
 We now consider three cases depending on whether $\li$ is secant, tangent or exterior to  $\bar\pi$. In each case we either reach a contradiction, or deduce that $\N_6=[\C]$, and so $\N_6$ corresponds to an $\Fq$-conic in $\bar\pi$.

Case 1: suppose that $\bar\pi$ is secant to $\li$. There is a unique $\Fq$-subplane secant to $\li$ containing the three non-collinear points  $\bar A,\bar B,\bar C$, so $\bar \pi=\bar\alpha$. That is, $\bar \C$ is an $\Fq$-conic in a secant $\Fq$-subplane $\bar \pi=\bar\alpha$, $\bar A,\bar B,\bar C\in\bar\C$ and  $\bar X,\bar Y\notin\bar\pi$. So by  Result~\ref{Fqconic-cases}, $\bar X,\bar Y$ lie in the quadratic extension $\PG(2,q^6)\setminus\PG(2,q^3)$. 
Let $\bar b=\bar \pi\cap\li$ and let $\ocb$ be as defined in  (\ref{defcb}). 
By  Corollary~\ref{cor:sec}   and Corollary~\ref{cor:not-exactBB}, in $\PG(6,q)$,  $[\C]$ is a conic in the plane $[\alpha]=[\pi]$ and $[\C]$ meets $\si$ in two points $K,K^{q^3}\in\sistarstar\setminus\sistar$, with 
$K\in \lround X\rround_{b}$. By  Result~\ref{Fqconic-cases}, $\bar X^{\ocb}=\bar Y$, so
$\lround X\rround_b =\langle X, (X^{{{\cb}^2}})^q,(X^{\cb})^{q^2}\rangle=\langle X,X^q,Y^{q^2}\rangle$.
 As $[\pi]\blackstar$ meets the plane $\lround X\rround_b $ in the point $K$,  $\langle\, [\pi]\blackstar,\lround X\rround_{b} \rangle$ is a 4-space of $\PG(6,q^6)$ that contains six points of $\Nsixstarstar$,  namely $\{[A],[B],[C],X,X^{{{q}}},Y^{q^2}\}$, a contradiction. 
 Hence this case does not occur.

Case 2: suppose  that   $\bar\pi$ is tangent to $\li$. By  Result~\ref{Fqconic-cases}, 
 either (a) one of $\bar X$ or $\bar Y$ lies in $\bar\C$, in which case $\bar X\neq\bar Y$ or (b) neither $\bar X$ or $\bar Y$  lie in $\bar\pi$. 
 Case 2(a): suppose  that $\bar\pi\cap\li=\bar X$, so $\bar X\in \bar\C$ and $\bar Y\in \li\setminus \bar X$. Then by  Corollary~\ref{cor:tgt}   and   Corollary~\ref{cor:not-exactBB},  $[\C]$ is a 4-dim nrc that contains one point of the spread plane $[X]$, and the extension $[\C]\star$ contains the six points $\{[A],[B],[C], \, Y,Y^{q}, Y^{q^2}\}.$ Thus $[\C]\star$ and $\Nsixstar$ have these six points in common, and these six points lie in the 4-space containing $[\C]\star$, a contradiction as no six points of $\Nsixstar$ lie in  a 4-space. Hence this case does not occur.
 
  Case 2(b): suppose    $\bar X,\bar Y\notin\bar\pi$. Recall that either $X,Y\in g$ possibly equal, or $X,Y\in\gstar\setminus g$. By  Corollary~\ref{cor:tgt}   and   Corollary~\ref{cor:not-exactBB},  $[\C]$ is a 6-dim nrc whose extension contains the nine points $\{[A],[B],[C], X,X^{q},X^{q^2},\, Y,Y^{q}, Y^{q^2}\}$. That is, $[\C]$ and $\N_6$ are 6-dim nrcs whose extensions to  $\PG(6,q^3)$ or $\PG(6,q^6)$ share nine points. Hence  by \cite[Theorem 21.1.1]{H2} they are equal. 
 Hence $\N_6$ corresponds in $\PG(2,q^3)$ to the $\Fq$-conic $\bar\C$.

Case 3: suppose that  $\bar\pi$ is exterior to $\li$. There are two cases, either (a)  $\bar X,\bar Y$  are the $(\bar \pi,\li$)-carriers on $\li$, or (b) they are not. 
Case 3(a):  suppose that $\bar X=\bar E$, $\bar Y=\bar E^{{\ocpi}}$ are the $(\bar \pi,\li$)-carriers  on $\li$. Then by  Corollary~\ref{cor:ext}     and Corollary~\ref{cor:not-exactBB},  $[\C]$ is a  3-dim nrc that contains the three points  $\{[A],[B],[C]\}$, and whose extension meets $\sistar$ in three points $\{R,R^{q}, R^{q^2}\}$ where 
$R\in h=E(E^{{{\mathsf c}_\pi}})^{q^2}=XY^{q^2}.$ 
Let $\Pi_3=\langle\, [A],[B],[C],R,R^{q}, R^{q^2}\rangle$ be the 3-space of $\PG(6,q^3)$ containing the 3-dim nrc $[\C]\star$. The two  lines $h=XY^{q^2}$ and $h^{q}=X^{q}Y$ are skew and each meet $\Pi_3$ in a point. Hence   
$\Pi_5=\langle\, \Pi_3, h,h^q \rangle$ 
is a 5-space which  contains seven points of $\Nsixstar$, namely 
$\{[A],[B],[C],  X,X^{q},Y^q,Y^{q^2}\},$ 
a contradiction. Hence this case does not occur.

Case 3(b): suppose that the points $\bar X,\bar Y$ are not $(\bar\pi,\li)$-carriers. Recall either $X,Y\in g$ possibly equal, or $X,Y\in\gstar\setminus g$.  By  Corollary~\ref{cor:ext}    and Corollary~\ref{cor:not-exactBB},  $[\C]$ is a 6-dim nrc whose extension contains the nine points $\{[A],[B],[C], X,X^{q},X^{q^2},\, Y,Y^{q}, Y^{q^2}\}$. That is, the extensions of $[\C]$ and $\N_6$ to   $\PG(2,q^3)$ or $\PG(2,q^6)$ are 6-dim nrcs which share nine points. So  by \cite[Theorem 21.1.1]{H2} they are equal. 
 Hence $\N_6$ corresponds in $\PG(2,q^3)$ to   the $\Fq$-conic $\bar\C$.

In summary, the only possibilities for $\bar \pi$ are given in Cases 2b) and 3b). 
In each case we show that $\N_6$ corresponds in $\PG(2,q^3)$ to the $\Fq$-conic $\bar\C\subset\bar \pi$. That is, a $3$-special 6-dim nrc of $\PG(6,q)$ corresponds to an $\Fq$-conic of $\PG(2,q^3)$. 
\end{proof}

\begin{lemma}\Label{l2}   In $\PG(6,q)$, let $\S$ be a regular 2-spread in the hyperplane at infinity $\si$. Let $\N_4$ be a $3$-special 4-dim nrc in $\PG(6,q)$, then $\N_4$ corresponds  in $\PG(2,q^3)$  to an $\Fq$-conic contained  in a tangent $\Fq$-subplane. 
\end{lemma}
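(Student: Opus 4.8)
The plan is to follow the template of Lemma~\ref{l1}: reconstruct an $\Fq$-conic directly from $\N_4$, then identify the $\Fq$-subplane that contains it as a tangent one, and check that its Bruck--Bose image is precisely $\N_4$.

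By Theorem~\ref{2spec-h3}(3) we have $\N_4\cap\si=\{T\}$ for a real point $T$, and $\Nfourstar\cap\sistar=\{T,P,P^q,P^{q^2}\}$ with $P\in g$ and $T\notin\langle P,P^q,P^{q^2}\rangle$. Using the Bruck--Bose notation of Lemma~\ref{l1}, let $\bar t\in\li$ be the point whose spread plane $[t]$ contains $T$ (so $t=[t]\star\cap g$) and let $\bar P\in\li$ correspond to $P\in g$; since $t=P$ would force $T\in\langle t,t^q,t^{q^2}\rangle=\langle P,P^q,P^{q^2}\rangle$, we have $\bar t\neq\bar P$. Writing $\Pi_4=\langle\N_4\rangle$, note $\Pi_4\star$ meets $g,g^q,g^{q^2}$ in the single points $P,P^q,P^{q^2}$, and $\Pi_4\star\cap\sistar=\langle T,P,P^q,P^{q^2}\rangle$. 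Now choose three affine points $[A],[B],[C]\in\N_4$ so that, as in Lemma~\ref{l1}, the plane $[\alpha]=\langle[A],[B],[C]\rangle$ meets $\si$ in a line not contained in a spread plane; then $\bar\alpha$ is a secant $\Fq$-subplane and $\bar A,\bar B,\bar C$ are non-collinear in $\PG(2,q^3)$ by Result~\ref{BB-Baer-3}. Neither $\bar t$ nor $\bar P$ lies on a line through two of $\bar A,\bar B,\bar C$: for $\bar P$, such a line would be a $3$-space through $[P]$ whose extension holds the five points $[A],[B],P,P^q,P^{q^2}$ of $\Nfourstar$, impossible for a $4$-dim nrc; $\bar t$ is handled by a similar count (there are $\ge q\ge 8$ affine points on $\N_4$). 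So $\bar A,\bar B,\bar C,\bar t,\bar P$ are five points of $\PG(2,q^3)$ no three collinear, lying on a unique $\Fqqq$-conic $\bar\O$ that is secant to $\li$ at $\bar t,\bar P$; as in Lemma~\ref{l1} (cf.\ \cite[Lemma 5.1]{BJW1}), $\bar A,\bar B,\bar C$ lie on a unique $\Fq$-conic $\bar\C$ with $\bar\Cplus=\bar\O$. Let $\bar\pi\supseteq\bar\C$ be its $\Fq$-subplane.

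It remains to pin down the position of $\bar\pi$ relative to $\li$. Since $\bar\Cplus=\bar\O$ is secant to $\li$ at the two $\PG(2,q^3)$-rational points $\bar t,\bar P$, Result~\ref{Fqconic-cases} allows only the cases \SI, \TI, \TII, \EI, \EII, and I would rule out all but \TI. Cases \TII and \EII are excluded because there $[\C]$ is a $6$-dim nrc whose extension passes through $t,t^q,t^{q^2},P,P^q,P^{q^2}$ (Corollaries~\ref{cor:tgt}, \ref{cor:ext}, \ref{cor:not-exactBB}); then $[\C]\star$ passes through the six points $[A],[B],[C],P,P^q,P^{q^2}$, all of which lie in the $4$-space $\Pi_4$, contradicting that $[\C]$ is a $6$-dim nrc. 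Case \SI is excluded because then $\bar\pi=\bar\alpha$ is secant with $\bar P\in\bar\pi\cap\li$, so by Result~\ref{BB-Baer-3} the plane $[\pi]=[\alpha]$ meets the spread plane $[P]$, and $\langle[P],[\alpha]\rangle$ is a $4$-space containing the six points $[A],[B],[C],P,P^q,P^{q^2}$ of the $4$-dim nrc $\N_4$. Case \EI is excluded by a dimension argument in the spirit of Lemma~\ref{l1}, Case~3(a): there $\{\bar t,\bar P\}$ would be the $(\bar\pi,\li)$-carriers, $[\C]$ a $3$-dim nrc $\N_3$, and $\VCBBstar$ would meet $\sistar$ in three T-lines $h,h^q,h^{q^2}$ passing respectively through $P^{q^2},P,P^q$; combining these lines with the $3$-space of $[\C]\star$ then forces an incompatibility between the configuration of $\N_4$ and the transversal lines. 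Thus $\bar\pi$ is tangent to $\li$ and, by case \TI of Result~\ref{Fqconic-cases}, $\li\cap\bar\pi=\bar t\in\bar\C$ with $\bar\Cplus\cap\li=\{\bar t,\bar P\}$.

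In case \TI, Corollaries~\ref{cor:tgt} and \ref{cor:not-exactBB} give that $[\C]$ is a $4$-dim nrc through $[A],[B],[C]$ whose extension meets $\sistar$ in $P,P^q,P^{q^2}$ (these lie on the nrc part, not the linear part $[t]$, by Theorem~\ref{cplus-g} together with $P\notin\langle t,t^q,t^{q^2}\rangle$) and one further real point of the spread plane $[t]$. Hence $[\C]$ and $\N_4$ are $4$-dim nrcs sharing the six points $[A],[B],[C],P,P^q,P^{q^2}$, which span $\Pi_4$; since each of the two curves meets $\si$ in a real point of $[t]\cap\Pi_4$, and that intersection is the single point $T$, they in fact share the seven points $[A],[B],[C],T,P,P^q,P^{q^2}$ and so coincide by \cite[Theorem 21.1.1]{H2}. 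Therefore $\N_4$ corresponds to the $\Fq$-conic $\bar\C$, contained in the tangent $\Fq$-subplane $\bar\pi$. The main obstacle is the case analysis of the previous paragraph: excluding case \EI (the analogue of Lemma~\ref{l1}, Case~3(a), is not quite automatic, since $\N_4$ meets $\si$ in only four points, three of which form a single conjugate triple), and, in case \TI, verifying that $[t]\cap\Pi_4=\{T\}$ so that the real intersection point of $[\C]$ with $\si$ must be $T$ and the seven-point coincidence can be invoked.
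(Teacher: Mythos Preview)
Your approach matches the paper's, and the overall architecture (reconstruct $\bar\O$ from five points, extract $\bar\C\subset\bar\pi$, then case-split on $\bar\pi$ versus $\li$) is correct. However, there are two genuine gaps in the case analysis beyond the ones you flag.

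First, your exclusion of case \SI\ does not actually reach a contradiction. You observe that $\langle[P],[\alpha]\rangle$ is a 4-space containing six points of $\N_4$, but a 4-dim nrc living in a 4-space is perfectly happy to have six of its points in that 4-space. The paper's argument goes one step further: in \SI\ the secant subplane $\bar\pi=\bar\alpha$ contains \emph{both} $\bar t$ and $\bar P$, so $[\alpha]$ meets $[t]$ as well; since $[\alpha]\subset\Pi_4$ and $\Pi_4\cap[t]=\{T\}$ (which follows from $\Pi_4\cap\si=\langle T,[P]\rangle$ and $[t]\cap[P]=\emptyset$), the plane $[\alpha]$ must contain $T$, giving four points $[A],[B],[C],T$ of $\N_4$ in a plane --- now a contradiction. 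Note that this already uses the fact $\Pi_4\cap[t]=\{T\}$ that you listed as a final-step obstacle; you need it earlier.

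Second, once you are in case \TI\ you assert $\li\cap\bar\pi=\bar t$, but nothing so far distinguishes $\bar t$ from $\bar P$: a priori $\li\cap\bar\pi$ could be $\bar P$. The paper handles this as a separate sub-case: if $\bar P=\li\cap\bar\pi\in\bar\C$, then Corollary~\ref{cor:tgt} gives $[\C]$ as a 4-dim nrc in a 4-space $\Sigma_4$ whose extension meets $\sistar$ in $\{t,t^q,t^{q^2},\Sigma_4\cap[P]\}$; hence $\Sigma_4$ contains both $[t]$ and $[\alpha]$, forcing $[\alpha]\cap[t]\neq\emptyset$, which contradicts $[\alpha]\cap[t]=\emptyset$ (established, as above, via $\Pi_4\cap[t]=\{T\}$ together with the four-points-in-a-plane obstruction). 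For the \EI\ case you flagged, the paper's concrete mechanism is to introduce the auxiliary regular 2-spread $\S_h$ of $\si$ with transversals $h=tP^{q^2},h^q,h^{q^2}$: the planes $[t],[P],\langle R,R^q,R^{q^2}\rangle\cap\si$ all lie in $\S_h$ and are pairwise disjoint, while the line $[\alpha]\cap\langle R,R^q,R^{q^2}\rangle$ is forced into $\Pi_4\cap\si\supset[P]$, a contradiction.
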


\begin{proof} In $\PG(6,q)$, let $\S$ be a regular 2-spread in the hyperplane at infinity $\si$, and let $g,g^q,g^{q^2}$ be the transversals of $\S$. We use a notation in $\PG(6,q)$ that is   consistent with our Bruck-Bose  notation, as described in the first paragraph of the proof of Lemma~\ref{l1}.

Let $\N_4$ be a $3$-special 4-dim nrc in $\PG(6,q)$, so by Theorem~\ref{2spec-h3}, $\Nfourstar$ meets $\sistar$ in four points $\{T,\, Y,Y^{q}, Y^{q^2}\}$ for some $T\in\si$,  $Y \in g$, $T\notin \langle Y,Y^q,Y^{q^2}\rangle$. The point $Y$ corresponds in $\PG(2,q^3)$ to a point $\bar  Y\in\li$. The point $T$ lies in a unique plane of the 2-spread $\S$, denote this spread plane by  $[\XT ]$, and note that as $T\notin[Y]$, we have $[\XT]\neq [Y]$. The spread plane $[\XT ]$ corresponds in $\PG(2,q^3)$  to a point $\bar \XT \in\li$, and $\bar \XT \neq \bar Y$.
Let $\Pi_4$ denote the 4-space containing $\N_4$. So $\Pi_4$ meets $\si$ in the 3-space $\langle T,[Y]\rangle$, that is, $\Pi_4\cap[\XT ]=T$. 

We first show that we can choose three affine points $[A], [B],[C]$ in $\N_4$ so that in $\PG(2,q^3)$, the five points $\bar A,\bar B,\bar C,\bar \XT ,\bar Y$ are no three collinear. 
The same argument as that in the proof of Lemma~\ref{l1} shows that we can find three affine points $[A],[B],[C]$ in $\N_4$ so that the plane $[\alpha]=\langle [A],[B],[C]\rangle$ corresponds to a secant $\Fq$-subplane $\bar \alpha$ of  $\PG(2,q^3)$, and the  three affine points $\bar A,\bar B,\bar C\in\bar \alpha$ are not collinear. 
In $\PG(6,q)$, suppose the planes $[\alpha]$ and $[\XT ]$ meet. As $[\alpha]$ is contained in $\Pi_4$ (the 4-space containing $\N_4$), $[\alpha]\cap [\XT ]=\Pi_4\cap [\XT ]=T$. That is, $[\alpha]$ contains the four points $\{[A],[B],[C],T\}$ of $\N_4$, a contradiction. Hence $[\alpha]$ does not meet $[\XT ]$. Hence a line joining any two of the points $[A],[B],[C]$ does not meet $[\XT ]$. Hence by  Result~\ref{BB-Baer-3}, 
 $\bar A,\bar B,\bar C,\bar \XT $ are four points of $\PG(2,q^3)$, no three collinear. 
Now suppose the three points $\bar A,\bar B,\bar Y$  of $\PG(2,q^3)$ are collinear. Then by  Result~\ref{BB-Baer-3},   in $\PG(6,q)$, the line joining $[A]$ and $[B]$ meets $\si$ in a point of the spread plane $[Y]$. Hence 
  $\langle [A],[B],[Y]\rangle$ is a 3-space whose extension contains five points of $\Nfourstar$, namely $\{[A],[B], Y,Y^q,Y^{q^2}\}$, a contradiction.
 So $\bar A,\bar B,\bar Y$ are not collinear,  similarly $\bar A,\bar C,\bar Y$ are not collinear and $\bar C,\bar B,\bar Y$ are not collinear. 
  Hence in $\PG(2,q^3)$,  the five points $\bar A,\bar B,\bar C,\bar \XT ,\bar Y$ are no three collinear.

As $\bar A,\bar B,\bar C,\bar \XT ,\bar Y$ are 
 five points  of $\PG(2,q^3)$, no three collinear, they  lie in a unique $\Fqqq$-conic  $\bar \O$ of $\PG(2,q^3)$. Similar to \cite[Lemma 5.1]{BJW1}, we can show that $\bar A,\bar B, \bar C$ lie on a unique $\Fq$-conic $\bar \C$  that is contained in $\bar \O$, that is $\bar\Cplus=\bar\O$. Denote the $\Fq$-subplane containing $\bar\C$ by $\bar\pi$. We consider three cases depending on whether $\li$ is secant, tangent or exterior to  $\bar\pi$. 
In each case we either reach a contradiction, or deduce that $\N_4=[\C]$ in which case $\N_4$ corresponds to the $\Fq$-conic in $\bar\pi$.

Case 1: suppose $\bar\pi$ is secant to $\li$.
As $\bar \XT ,\bar Y$ lie in $\PG(2,q^3)$, by  Result~\ref{Fqconic-cases}, they lie in $\bar\pi$, so lie in $\bar\C$. By  Result~\ref{BB-Baer-3}, in $\PG(6,q)$, $[\pi]$ is a plane of $\PG(6,q)\setminus\si$ which meets both  spread planes $[\XT ]$, $[Y]$ in a point. Hence $\langle [\pi],[Y]\rangle$ is a 4-space whose extension contains the six points $\{ [A],[B],[C], Y,Y^{q}, Y^{q^2}\}$ of $\N_4$. Hence $\N_4$ lies in this 4-space, and so $[\pi]$ meets $[\XT ]$ in the point $\Pi_4\cap[\XT ]=T$. Thus the plane $[\pi]$ contains the four points $\{ [A],[B],[C], T\}$ of $\N_4$, a contradiction. 
So this case does not occur.

Case 2: suppose  $\bar\pi$ is tangent to $\li$. Recall $\bar \XT \neq\bar Y$, so there are three cases to consider, either (a) $\bar \XT =\bar\pi\cap\li$, (b)  $\bar Y=\bar\pi\cap\li$, or (c)   $\bar \XT ,\bar Y\notin\bar\pi$. Case 2(a): suppose    that $\bar\pi\cap\li=\bar \XT $, so $\bar \XT \in \bar\C$. Then by  Corollary~\ref{cor:tgt}    and Corollary~\ref{cor:not-exactBB},  $[\C]$ is a   4-dim nrc lying in a 4-space $\Sigma_4$, and $[\C]\star$ meets $\sistar$ in the four  points $\{Y,Y^{q}, Y^{q^2},[\XT ]\cap\Sigma_4\}$.
Now  $\Pi_4$ (the 4-space containing $\N_4$) contains the three non-collinear affine points $[A],[B],[C]$ and the spread plane  $[Y]$, and so  $\Pi_4=\langle [A],[B],[C],[Y]\rangle$.
As $\Sigma_4$ contains  $[A],[B],[C]$ and $[Y]$, $\Sigma_4=\Pi_4$. Thus $[\XT ]\cap\Sigma_4=[\XT ]\cap\Pi_4=T$. Hence 
 $[\C]\star$ and $\Nfourstar$ are 4-dimensional nrcs with  seven points 
 in common, namely $\{[A],[B],[C],Y,Y^{q}, Y^{q^2},T\}$, so  by \cite[Theorem 21.1.1]{H2} they are equal. 
That is $\N_4$ corresponds to the $\Fq$-conic $\C$. 

Case 2(b): suppose  $\bar Y\in\bar\pi$, so $\bar Y\in\bar\C$. Then by  Corollary~\ref{cor:tgt}   and Corollary~\ref{cor:not-exactBB},  $[\C]$ is a   4-dim nrc contained in a 4-space $\Sigma_4$ whose extension  meets $\sistar$ in the four  points $\{\XT ,\XT ^{q}, \XT ^{q^2},\Sigma_4\cap[Y]\}.$
As the two planes $[\XT ]$ and $[\alpha]=\langle [A],[B],[C]\rangle$ lie in $\Sigma_4$, they meet in a point. This contradicts the argument above where we show that $[\alpha]$ does not meet the spread plane $[\XT ]$.
So this case does not occur. 

Case 2(c): suppose    $\bar \XT ,\bar Y\notin\bar\pi$. Recall $\XT ,Y\in g$, $\XT \neq Y$, so by  Corollary~\ref{cor:tgt}    and Corollary~\ref{cor:not-exactBB},  $[\C]$ is a   6-dim nrc, $[\C]\star$ contains the nine points $\{[A],[B],[C], \XT ,\XT ^{q},\XT ^{q^2},\, Y,Y^{q}, Y^{q^2}\}.$  That is, $[\C]\star$ and $\Nfourstar$ share six points, namely $\{[A],[B],[C],  Y,Y^{q}, Y^{q^2}\}.$ That is,  $[\C]\star$ contains six points which lie in the 4-space containing $\Nfourstar$, 
  contradicting  $[\C]\star$ being a 6-dim nrc. Hence this case cannot occur.

Case 3: suppose $\bar\pi$ is exterior to $\li$. There are two cases, either (a)  $\bar \XT ,\bar Y$  are the $(\bar \pi,\li$)-carriers on $\li$, or (b) they are not. 
Case 3(a):  suppose that $\bar \XT =\bar E$, $\bar Y=\bar E^{{\ocpi}}$ are the $(\bar \pi,\li$)-carriers  on $\li$.  Then by  Corollary~\ref{cor:ext}   and Corollary~\ref{cor:not-exactBB},  $[\C]$ is a   3-dim nrc whose extension  meets $\sistar$ in  three points  $\{R,R^{q}, R^{q^2}\}$, where 
$R\in h=\XT Y^{q^2}$, and $R\neq \XT ,Y^{q^2}$.  The line $h=XY^{q}$ lies in $\PG(6,q^3)$ and does not meet $\PG(6,q)$. The lines $h,h^q,h^{q^2}$ are transversals of a regular 2-spread $\S_h$ of $\si$. Moreover, $[\XT ],[Y]$ are planes of $\S_h$, thus  
the three planes $[\XT ],[Y]$, $\langle R,R^q,R^{q^2}\rangle\cap\si$  of $\S_h$ are pairwise disjoint. 
 The two planes $[\alpha]=\langle [A],[B],[C]\rangle$ and $\langle R,R^q,R^{q^2}\rangle\cap\si$ lie in the 3-space   containing $[\C]$. Hence $[\alpha]$ meets  $\langle R,R^q,R^{q^2}\rangle\cap\si$ in a line $m$.
As $[\alpha]$ lies in the 4-space $\Pi_4$ containing $\N_4$, the line $m$ lies in the 3-space $\Pi_4\cap\si$. This 3-space contains the plane $[Y]$, so $m$ meets $[Y]$ in at least a point. This contradicts the two planes 
$[Y]$, $\langle R,R^q,R^{q^2}\rangle\cap\si$ being disjoint. 
 Hence this case does not occur.

 Case 3(b): suppose $\bar \XT ,\bar Y$ are not   $(\bar\pi,\li)$-carriers. Recall $\XT \neq Y$ and $\XT ,Y\in g$. 
By  Corollaries~\ref{cor:ext}    and \ref{cor:not-exactBB},  $[\C]$ is a   6-dim nrc and $[\C]\star$ contains  the nine points $\{[A],[B],[C], \XT ,\XT ^{q},\XT ^{q^2},\, Y,Y^{q}, Y^{q^2}\}.$ That is,   $[\C]\star$ and $\Nfourstar$  share the six points $\{[A],[B],[C],  Y,Y^{q}, Y^{q^2}\}.$ These six points  lie in the  4-space containing $\Nfourstar$,  contradicting $[\C]\star$ being  a 6-dim nrc. Hence this case cannot occur. 

In summary, the only possibility for $\bar \pi$  is that described in case 2(a), where $\bar\pi$ is tangent to $\li$ and $\bar \XT \in\bar\pi$. Hence if $\N_4$ is a $3$-special 4-dim nrc in $\PG(6,q)$, then $\N_4$ corresponds  in $\PG(2,q^3)$  to the $\Fq$-conic $\C$, and $\bar\pi$ is tangent to $\li$. 
\end{proof}

\begin{lemma}\Label{l3}   In $\PG(6,q)$, let $\S$ be a regular 2-spread in the hyperplane at infinity $\si$. Let $\N_3$ be a $3$-special 3-dim nrc in $\PG(6,q)$, then $\N_3$ corresponds in $\PG(2,q^3)$  to an $\Fq$-conic contained in an exterior $\Fq$-subplane.
\end{lemma}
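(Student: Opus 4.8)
The plan is to follow the template of Lemmas~\ref{l1} and~\ref{l2}, working in $\PG(6,q)$ with the Bruck--Bose-consistent notation of those proofs (an affine point is written $[A]$ and corresponds to $\bar A\in\PG(2,q^3)$; a spread plane $[X]$ of $\S$ corresponds to $\bar X\in\li$ and to the point $X=[X]\star\cap g$). Let $\Pi_3$ be the $3$-space containing $\N_3$. By Theorem~\ref{2spec-h3}(4) we have $\Nthreestar\cap\sistar=\{R,R^q,R^{q^2}\}$ with $R$ on a $T$-line $h=XY^q$ for distinct $X,Y\in g$ and $R\neq X,Y^q$; write $\bar X,\bar Y$ for the corresponding distinct points of $\li$. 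I would record two facts at the outset. First, from the case analysis in the proof of Theorem~\ref{2spec-h3}, each $R^{q^i}$ is an $\S$-happy point of weight $2$ with $s(R^{q^i})=2$, so $R$ lies in no extended spread plane of $\S$; hence $\rho:=\Pi_3\cap\si$ is \emph{not} a spread plane of $\S$, since $R\in\rho\star$. Second, as in the proof of Lemma~\ref{l2}, the lines $h,h^q,h^{q^2}$ are the transversal lines of a second regular $2$-spread $\S_h$ of $\si$; since $R,R^q,R^{q^2}$ lie on these transversals and $R\in\rho\star$, the plane $\rho$ \emph{is} a spread plane of $\S_h$. Consequently, for any plane $[\alpha]$ spanned by three affine points of $\N_3$ we have $\Pi_3=\langle[\alpha],\rho\rangle$, and $\Pi_3$ meets each of $h,h^q,h^{q^2}$ in a single point, namely $R,R^q,R^{q^2}$.

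Next I would choose three affine points $[A],[B],[C]$ of $\N_3$ --- there are $q-2\geq 6$ of them --- with three properties: the plane $[\alpha]=\langle[A],[B],[C]\rangle$ meets $\si$ in a line not contained in a spread plane of $\S$; the points $\bar X,\bar Y$ do not lie in the $\Fq$-subplane $\bar\alpha$ determined by $[\alpha]$; and the five points $\bar A,\bar B,\bar C,\bar X,\bar Y$ are no three collinear in $\PG(2,q^3)$. The first property is obtained exactly as in Lemma~\ref{l1}, the existence of the required fourth affine point outside a $3$-space $\langle[\alpha],[Z]\rangle$ being guaranteed here precisely by the fact, established above, that $\rho$ is not a spread plane of $\S$. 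The other two properties each exclude only a bounded number of choices of $[A],[B],[C]$, so all three are attainable for $q\geq 8$. By Result~\ref{BB-Baer-3}, $\bar\alpha$ is then an $\Fq$-subplane secant to $\li$ with $\bar A,\bar B,\bar C\in\bar\alpha$ non-collinear. Since $\bar A,\bar B,\bar C,\bar X,\bar Y$ are five points of $\PG(2,q^3)$, no three collinear, with $\bar X,\bar Y$ distinct points of $\li$, they lie on a unique $\Fqqq$-conic $\bar\O$; and, as in \cite[Lemma~5.1]{BJW1}, $\bar A,\bar B,\bar C$ lie on a unique $\Fq$-conic $\bar\C$ with $\bar\Cplus=\bar\O$, contained in an $\Fq$-subplane $\bar\pi$.

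Since $\bar X,\bar Y\in\bar\O\cap\li$ are distinct, $\bar\Cplus$ is secant to $\li$, so by Result~\ref{Fqconic-cases} we are in one of the cases \SI, \TI, \TII, \EI, \EII; I would eliminate all but \EI. Case \SI forces $\bar X,\bar Y\in\bar\pi=\bar\alpha$ (the unique secant $\Fq$-subplane through $\bar A,\bar B,\bar C$), contradicting the choice of $[A],[B],[C]$. In cases \TI, \TII and \EII, Corollaries~\ref{cor:tgt}, \ref{cor:ext} and~\ref{cor:not-exactBB} show that $[\C]$ is a $4$-dim or $6$-dim nrc through $[A],[B],[C]$; here I would derive a contradiction from the fact that $\N_3$ lies in $\Pi_3=\langle[\alpha],\rho\rangle$, whose trace $\rho$ on $\si$ is a spread plane of $\S_h$. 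In the surviving case \EI, the carriers of $\bar\pi$ on $\li$ are $\bar X,\bar Y$, and by Corollary~\ref{cor:ext}(1) $[\C]$ is a $3$-dim nrc through $[A],[B],[C]$ whose $3$-space contains $[\alpha]$ and the extended $\S_h$-spread-plane through the three points $\{[\C]\star\cap\sistar\}$; being $3$-dimensional, this $3$-space is forced to be $\langle[\alpha],\rho\rangle=\Pi_3$, so $[\C]\star$ meets $\sistar$ in $\Pi_3\cap(h\cup h^q\cup h^{q^2})=\{R,R^q,R^{q^2}\}$. Hence $[\C]\star$ and $\Nthreestar$ are $3$-dim nrcs in the same $3$-space sharing six points, so are equal by \cite[Theorem~21.1.1]{H2}, and $\N_3=[\C]$ corresponds in $\PG(2,q^3)$ to the $\Fq$-conic $\bar\C$ in the exterior $\Fq$-subplane $\bar\pi$.

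The step I expect to be the main obstacle is the elimination of cases \TI, \TII and \EII. In Lemmas~\ref{l1} and~\ref{l2} the analogous contradictions were immediate, because there $\N_r$ shared at least six points with $[\C]$, forcing impossible incidences for a higher-dimensional nrc; here $\N_3$ shares only the three affine points $[A],[B],[C]$ with a $4$- or $6$-dim $[\C]$, so a new argument is required. The extra leverage available is exactly that $\rho=\Pi_3\cap\si$ is a spread plane of the auxiliary regular spread $\S_h$ but not of $\S$ (equivalently, that $s(R)=2$), and I would expect the resolution to consist of showing that a $4$- or $6$-dim nrc through $[A],[B],[C]$ whose infinite points are distributed over the spread planes of $\S$ as in Corollaries~\ref{cor:tgt} and~\ref{cor:ext} cannot have $[\alpha]=\langle[A],[B],[C]\rangle$ meeting $\si$ inside an $\S_h$-spread-plane.
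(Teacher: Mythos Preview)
Your overall template matches the paper's proof closely, and your treatment of the setup, of Case \SI, and of Case \EI\ is essentially the paper's argument. The gap you yourself flag---the elimination of \TI, \TII, \EII---is real, and the paper resolves it concretely rather than by a generic ``$[\alpha]$ cannot meet $\si$ inside an $\S_h$-plane'' principle. Here is what is actually used.

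First, your observation that $\rho=\Pi_3\cap\si$ is an $\S_h$-plane should be pushed one step further: since $[X]$ and $[Y]$ are also $\S_h$-planes and $\rho\neq[X],[Y]$ (as $s(R)=2$), the plane $\rho$ is disjoint from both $[X]$ and $[Y]$, hence $\Pi_3$ (and therefore $[\alpha]$) is disjoint from $[X]$ and $[Y]$. This is not a genericity statement about the choice of $[A],[B],[C]$; it holds for every such choice, and replaces your ``bounded number of choices'' argument for $\bar X,\bar Y\notin\bar\alpha$. More importantly, it is exactly what kills \TI: if, say, $\bar X\in\bar\pi$, then by Corollary~\ref{cor:tgt} the $4$-space containing the $4$-dim nrc $[\C]$ contains both $[\alpha]$ and the spread plane $[Y]$, forcing $[\alpha]\cap[Y]\neq\emptyset$---contradicting what was just shown.

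For \TII\ and \EII, where $[\C]$ is a $6$-dim nrc through $[A],[B],[C]$ and the six transversal points $X,X^q,X^{q^2},Y,Y^q,Y^{q^2}$, the paper does not argue about $[\alpha]\cap\si$ at all. Instead it uses the $T$-lines directly: the skew lines $h$ and $h^q$ each meet $\Pithreestar$ (in $R$ and $R^q$) but are not contained in it, so $\langle\Pithreestar,h,h^q\rangle$ is a $5$-space. This $5$-space contains $[A],[B],[C]$ together with the four transversal-points on $h\cup h^q$ (two on $g$, one on $g^q$, one on $g^{q^2}$), giving seven points of the $6$-dim nrc $[\C]\star$ in a hyperplane---a contradiction. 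This is the missing idea; once you have it, \TII\ and \EII\ fall immediately, and your sketch for \EI\ then completes the proof exactly as the paper does.
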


\begin{proof} In $\PG(6,q)$, let $\S$ be a regular 2-spread in the hyperplane at infinity $\si$, and let $g,g^q,g^{q^2}$ be the transversals of $\S$. We use a notation in $\PG(6,q)$ that is   consistent with our Bruck-Bose  notation, as described in the  first paragraph of the proof of Lemma~\ref{l1}. 

Let $\N_3$ be a $3$-special 3-dim nrc in $\PG(6,q)$. By Theorem~\ref{2spec-h3}, there are distinct points $X,Y\in g$ such that $\N_3\cap\si=\{R,R^{q},R^{q^2}\}$, where  $R\in XY^{q}$, $R\neq X,Y^{q}$. 

We first show that we can choose three affine points $[A], [B],[C]$ in $\N_3$ so that in $\PG(2,q^3)$, the five points $\bar A,\bar B,\bar C,\bar X,\bar Y$ are no three collinear. 
The same argument as that in the proof of Lemma~\ref{l1} shows that we can find three affine points $[A],[B],[C]$ in $\N_3$ so that the plane $[\alpha]=\langle [A],[B],[C]\rangle$ corresponds to a secant $\Fq$-subplane $\bar \alpha$ of  $\PG(2,q^3)$, and the  three affine points $\bar A,\bar B,\bar C\in\bar \alpha$ are not collinear. We next show that the points $\bar X,\bar Y$ are not in $\bar \alpha$. 
Let $\Pi_3$ be the 3-space containing $\N_3$. Then $\Pithreestar$ contains the six points $\{[A],[B],[C],R,R^{q}, R^{q^2}\}$ and 
 $\Pithreestar\cap\sistar$ is the plane $\langle R,R^{q}, R^{q^2}\rangle$.
The line $h=XY^{q}$ lies in $\PG(6,q^3)$ and does not meet $\PG(6,q)$. Hence the three lines 
 $h$, $h^q,h^{q^2}$ are the transversal lines of a regular 2-spread $\S_h$ of $\PG(6,q)$. The two regular 2-spreads $\S,\S_h$ share exactly two planes, namely $[X],[Y]$. As $\S_h$ contains the three planes 
 $[X],[Y]$ and  $\langle R,R^{q}, R^{q^2}\rangle\cap\si$, these three planes are pairwise disjoint. Hence $\Pi_3$ does not meet $[X]$ or $[Y]$ and so the plane $[\alpha]$ does not meet $[X]$ or $[Y]$. 
Thus in $\PG(2,q^3)$, $\bar A,\bar B,\bar C$ are non-collinear affine points in the secant $\Fq$-subplane $\bar \alpha$,  and the points $\bar X,\bar Y$ lie in $\li\setminus \bar \alpha$. Hence 
the five points $\bar A,\bar B,\bar C,\bar X,\bar Y$ are no three collinear.

As $\bar A,\bar B,\bar C,\bar X,\bar Y$  are  five points of $\PG(2,q^3)$,  no three collinear,  they lie in a unique $\Fqqq$-conic  $\bar \O$  of $\PG(2,q^3)$. Similar to \cite[Lemma 5.1]{BJW1}, we can show that the three points $\bar A,\bar B, \bar C$ lie on a unique $\Fq$-conic $\bar\C$ that is contained in $\bar \O$ (that is, $\bar \Cplus=\bar \O$). Denote the $\Fq$-subplane containing $\bar\C$ by $\bar\pi$. We consider three cases depending on whether  $\bar\pi$ is secant, tangent or exterior to  $\li$. 
In each case we either reach a contradiction, or deduce that $\N_3=[\C]$, and so $\N_3$ corresponds to an $\Fq$-conic in $\bar\pi$.

Case 1: suppose $\bar\pi$ is secant to $\li$. 
In this case we have $\bar\C\cap\li=\emptyset$ and $\bar\Cplus\cap\li=\{\bar X,\bar Y\}\subset\PG(2,q^3)$. This contradicts  Result~\ref{Fqconic-cases}  which shows that if  $\bar\C$ lies in a secant $\Fq$-subplane and $\bar\C\cap\li=\emptyset$, then the  intersection of $\Cplus$ with $\li$ lies in $\PG(2,q^6)\setminus\PG(2,q^3)$. 
Hence this case cannot occur.

Case 2: suppose $\bar\pi$ is tangent to $\li$. Recalling that $\bar X\neq \bar Y$, there are two cases to consider. Either (a) exactly one of $\bar X,\bar Y$ lie in $\bar \pi$ or (b) $\bar X,\bar Y\notin\bar \pi$.
Case 2(a): suppose $\bar X\in\bar\pi$, then $\bar X\in\bar\C$. By  Corollary~\ref{cor:tgt}   and Corollary~\ref{cor:not-exactBB}, $[\C]$ is a 4-dim nrc whose extension contains  the six points $\{[A],[B],[C],Y,Y^{q},Y^{q^2}\}$ and a point in the spread plane $[X]$. So the 4-space $\Pi_4$ containing $[\C]$ contains the two planes $[Y]$ and $[\alpha]=\langle [A],[B],[C]\rangle$, so these planes meet in at least a point, a contradiction as the two planes $[Y]$, $[\alpha]$ do not meet. Hence this case cannot occur. 
 Similarly the case $\bar Y\in\bar \pi$ cannot occur.

Case 2(b): suppose $\bar X,\bar Y\notin\bar\pi$. Recall $X\neq Y$ and $X,Y\in g$. By  Corollary~\ref{cor:tgt}   and Corollary~\ref{cor:not-exactBB}, $[\C]$ is a 6-dim nrc and $[\C]\star$ contains the nine points $\{[A],[B],[C],X,X^{q},X^{q^2},Y,Y^{q},Y^{q^2}\}.$ 
Let  $\Pi_3$ be the 3-space containing $\N_3$. So $\Pithreestar$ contains the six points $\{[A],[B],[C],R,R^{q},R^{q^2}\}$ where $R\in h=XY^{q^2}$. As $h,h^q$ are skew, and are not contained in $\Pithreestar$, $\langle \Pithreestar,h,h^q\rangle$ is a 5-space  of $\PG(6,q^3)$ that contains seven points of the 6-dim nrc $[\C]\star$, namely $\{[A],[B],[C],X,X^{q},Y,Y^{q^2}\}$,
a contradiction. Hence this case cannot occur.

Case 3: suppose $\bar\pi$ is exterior to $\li$. There are two cases, either (a)  $\bar X,\bar Y$  are the $(\bar \pi,\li$)-carriers on $\li$, or (b) they are not. 
Case 3(a):  suppose that $\bar X=\bar E$, $\bar Y=\bar E^{{\ocpi}}$ are the $(\bar \pi,\li$)-carriers  on $\li$.  By  Corollary~\ref{cor:ext}    and Corollary~\ref{cor:not-exactBB}, $[\C]$ is a 3-dim nrc in a 3-space $\Sigma_3$ whose extension contains the six points $\{[A],[B],[C] ,K,K^q,K^{q^2}\}$, where $K\in h=XY^{q^2}$, $K\neq X,Y^{q^2}$.
Recall that the 3-space  $\Pithreestar$  containing $\Nthreestar$  contains the six points $\{[A],[B],[C],R,R^{q},R^{q^2}\}$ where $R\in h=XY^{q^2}$.  We show that $R=K$. 
Let $\S_h$ be the regular 2-spread of $\si$ with transversal lines $h,h^q,h^{q^2}$. 
The  planes $\langle K,K^{q},K^{q^2}\rangle\cap\si=\Sigma_3\cap\si$ and $\langle R,R^{q},R^{q^2}\rangle\cap\si=\Pi_3\cap\si$ are planes of $\S_h$, so are either equal or disjoint. The plane $[\alpha]=\langle [A],[B],[C]\rangle$ lies in both $\Sigma_3$ and $\Pi_3$, so $\Sigma_3$ and $\Pi_3$ have a non-empty intersection in $\si$.
That is, the two planes $\langle K,K^{q},K^{q^2}\rangle$, $\langle R,R^{q},R^{q^2}\rangle$ are not disjoint, so they are equal and hence
 $K=R$. Hence 
 $[\C]$ and $\N_3$ are 3-dim nrcs whose extensions have   six points  
in common, namely $\{[A],[B],[C] ,R,R^q,R^{q^2}\}$.
 Hence by \cite[Theorem 21.1.1]{H2} they are equal. 
Case 3(b): suppose $\bar X,\bar Y$ are not  $(\bar\pi,\li)$-carriers, then a similar argument to that in Case  2(b) gives a contradiction, and so this case does not occur. 

In summary, the only possibility is that described in case 3(a). Thus if $\N_3$ is a $3$-special 3-dim nrc in $\PG(6,q)$, then $\N_3$ corresponds in $\PG(2,q^3)$  to an $\Fq$-conic contained in an exterior $\Fq$-subplane.
\end{proof}

\begin{lemma}\Label{l4}  In $\PG(6,q)$, let $\S$ be a regular 2-spread in the hyperplane at infinity $\si$. Let $\N_2$ be a $3$-special 2-dim nrc in $\PG(6,q)$, then $\N_2$ corresponds  in $\PG(2,q^3)$ to an $\Fq$-conic contained in an $\Fq$-subplane secant   to $\li$. 
\end{lemma}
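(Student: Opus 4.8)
The plan is to mirror the structure of the proofs of Lemmas~\ref{l1}, \ref{l2} and \ref{l3}, using the notation in $\PG(6,q)$ consistent with the Bruck-Bose notation. By Theorem~\ref{2spec-h3} (cases 5 and 6), a $3$-special $2$-dim nrc $\N_2$ is a non-degenerate conic lying in a plane $\Pi_2$ of $\PG(6,q)\setminus\si$, and either (i) $\N_2\cap\si=\{P,Q\}$ with $P,Q\in\si$ (possibly $P=Q$), or (ii) $\Ntwostarstar\cap\sistarstar=\{K,K^{q^3}\}$ with $K\in\sistarstar\setminus\sistar$ not lying in an extended transversal line. In case (i), I would pick $P,Q\in\si$ real; in case (ii), the two points of $\Pi_2\cap\sistarstar$ lie on the line $m=\Pi_2\cap\sistar$, and since $K,K^{q^3}\notin\sistar$, this line $m$ is exterior to $\N_2$, with $m^q=m$ fixed (so $m\subset\si$ is a real line of $\PG(5,q)$). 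The first step is to show that $\Pi_2$ corresponds in $\PG(2,q^3)$ to an $\Fq$-subplane $\bar\alpha$ secant to $\li$: the line $\Pi_2\cap\si$ is not contained in a spread plane --- in case (i) because $P,Q$ lie in $\si$ and I can choose them so, and more carefully, because if $\Pi_2\cap\si$ lay in a spread plane $[Z]$ then $\langle\Pi_2,[Z]\rangle$ would be a $3$-space meeting $\N_2$ in a conic, forcing $\bar\C$ to lie on an $\Fq$-subline, contradiction. So by Result~\ref{BB-Baer-3}(2), $\bar\alpha$ is a secant $\Fq$-subplane, and the (at least three) affine points of $\N_2$ are non-collinear affine points $\bar A,\bar B,\bar C$ of $\bar\alpha$ (non-collinear by Result~\ref{BB-Baer-3}, since $[A],[B],[C]$ span $\Pi_2$).

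The second step is to build an $\Fqqq$-conic. In case (i), I claim $\bar P,\bar Q\notin\bar\alpha$: if, say, $\bar P\in\bar\alpha\subset\PG(2,q^3)$, then by Result~\ref{BB-Baer-3} the plane $[\alpha]=\Pi_2$ would meet the spread plane $[P]$ in a point, so $\langle[P],\Pi_2\rangle$ would be a $4$-space meeting $\N_2$ in a conic plus the point $P$, which is fine for a conic but then $\bar P\in\bar\C$, and this case reduces to the expected output. Actually the cleaner route: the five points $\bar A,\bar B,\bar C,\bar P,\bar Q$ of $\PG(2,q^3)$ (or of $\PG(2,q^6)$ in case (ii)) are no three collinear --- the collinearity of any of $\bar A,\bar B,\bar Q$ would, via Result~\ref{BB-Baer-3}, force the line $[A][B]$ to meet $[Q]$, hence $\langle[A],[B],[Q]\rangle$ a $3$-space containing $4$ points of $\N_2$ (a conic has no $4$ points in a plane, contradiction); similarly no three involving just $\bar A,\bar B,\bar C$ are collinear. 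So they determine a unique $\Fqqq$-conic $\bar\O$ of $\PG(2,q^3)$ (in case (ii), using that $K,K^{q^3}$ are conjugate under the quadratic extension, the $\mathbb F_{q^6}$-conic through the five points meets $\PG(2,q^3)$ in an $\Fqqq$-conic, exactly as in Lemma~\ref{l1} case (iii)). Then, as in \cite[Lemma~5.1]{BJW1}, $\bar A,\bar B,\bar C$ lie on a unique $\Fq$-conic $\bar\C$ with $\bar\Cplus=\bar\O$; let $\bar\pi$ be the $\Fq$-subplane containing $\bar\C$.

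The third and main step is the case analysis on how $\li$ meets $\bar\pi$, to exclude the tangent and exterior possibilities and force $\bar\pi$ secant. If $\bar\pi$ were tangent to $\li$, then Corollary~\ref{cor:tgt} together with Corollary~\ref{cor:not-exactBB} would make $[\C]$ a $4$-dim or $6$-dim nrc, which has more than two points on $\si$ in the extension and cannot coincide with the $2$-dim nrc $\N_2$ (whose extension meets $\si$ in exactly two points); more precisely, $[\C]\star$ would then share $\geq 3$ of the points $\bar A,\bar B,\bar C$ with $\Ntwostarstar$, and since three points of a conic span its plane $\Pi_2$, one checks the two curves would have to be equal, which is impossible for different degrees. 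If $\bar\pi$ were exterior to $\li$, Corollary~\ref{cor:ext} and Corollary~\ref{cor:not-exactBB} give $[\C]$ a $3$-dim or $6$-dim nrc, again of the wrong degree. Hence $\bar\pi$ is secant to $\li$, and then by Corollary~\ref{cor:sec} and Corollary~\ref{cor:not-exactBB}, $[\C]$ is a non-degenerate conic in the plane $[\pi]$; since the unique secant $\Fq$-subplane through the non-collinear $\bar A,\bar B,\bar C$ is $\bar\alpha$, we get $\bar\pi=\bar\alpha$ and $[\pi]=[\alpha]=\Pi_2$. Finally, $[\C]$ and $\N_2$ are both non-degenerate conics in the same plane $\Pi_2$ sharing the three points $[A],[B],[C]$, and (using the two further points at infinity: in case (i) the two real points $P\in[P]\cap\Pi_2$, $Q\in[Q]\cap\Pi_2$ from Corollary~\ref{cor:sec}(1), in case (ii) the two extension points $K,K^{q^3}$ from Corollary~\ref{cor:sec}(2)) they share five points, hence are equal. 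Therefore $\N_2=[\C]$ corresponds to the $\Fq$-conic $\bar\C$ in the secant $\Fq$-subplane $\bar\pi$.

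I expect the main obstacle to be cleanly excluding case (ii)'s interaction with a \emph{secant} $\bar\pi$ versus a \emph{tangent/exterior} one: one must verify that when $\bar\C\cap\li=\emptyset$ inside a secant $\bar\pi$, Corollary~\ref{cor:sec}(2) really does place the two extension points of $\N_2\cap\sistarstar$ off every T-line and off every extended spread plane (so they cannot accidentally be of the type arising from tangent case \TIV{} or exterior case \EIV), and to confirm that the $\Fqqq$-conic $\bar\O$ we constructed in case (ii) is genuinely the one whose $\Fq$-sub-conic sits in a \emph{secant} subplane --- i.e. that the weight-$3$, orbit-$2$ conjugate pair $K,K^{q^3}$ forces case \SIII{} rather than \TIV{} or \EIV. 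The weight bookkeeping in Definition~\ref{def-happy} is what rules out the latter two (there $P^{q^i}$ have weight $1$, not $3$), so this should go through, but it is the step that requires care.
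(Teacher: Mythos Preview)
Your approach is essentially correct but far more elaborate than the paper's. The paper dispatches this lemma in two sentences: by Result~\ref{BB-Baer-3}(2) and its converse, there is a direct pointwise isomorphism between planes of $\PG(6,q)\setminus\si$ that meet $\si$ in a line not contained in a spread plane and secant $\Fq$-subplanes of $\PG(2,q^3)$; under this isomorphism the non-degenerate conic $\N_2$ is carried to a non-degenerate conic of the corresponding secant $\Fq$-subplane, and that is the whole proof. No auxiliary $\Fqqq$-conic $\bar\O$ is constructed, and no case analysis on tangent/exterior $\bar\pi$ is performed.

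Your route --- mirroring Lemmas~\ref{l1}--\ref{l3} by picking $\bar A,\bar B,\bar C$, building $\bar\O$, extracting the unique $\Fq$-sub-conic $\bar\C\subset\bar\O$, and then excluding the tangent and exterior cases for $\bar\pi$ via degree mismatch with Corollaries~\ref{cor:tgt} and~\ref{cor:ext} --- works, but is unnecessary here. The $2$-dimensional case is the unique one in which the ambient space of the nrc (the plane $\Pi_2$ itself) is already the Bruck-Bose image of a subplane, so there is no need to reconstruct $\bar\pi$ from $\bar\O$: it is handed to you as the image of $\Pi_2$. Once your step~1 identifies $\bar\alpha$ as secant, the uniqueness of the secant $\Fq$-subplane through the non-collinear $\bar A,\bar B,\bar C$ forces $\bar\pi=\bar\alpha$ immediately, making your step~3 redundant. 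One small wrinkle you do not cleanly resolve is the argument that $m=\Pi_2\cap\si$ is not contained in a spread plane: as written it invokes $\bar\C$ before $\bar\C$ has been defined. In your case~(ii) this is automatic since $s(K)=2$ means $K$ lies in no extended spread plane; in case~(i) you would need a direct argument (the paper is equally terse on this point).
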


\begin{proof} In $\PG(6,q)$, let $\S$ be a regular 2-spread in the hyperplane at infinity $\si$.  By  Theorem~\ref{conic-sec}, an $\Fq$-conic in a secant $\Fq$-subplane corresponds to a 2-dim nrc. Let $\N_2$ be a $3$-special 2-dim nrc in $\PG(6,q)$. 
As there is a direct isomorphism between secant $\Fq$-subplanes of $\PG(2,q^3)$  and planes of $\PG(6,q)\setminus \si$ that meet $\si$ in a line not contained in a spread plane, $\N_2$ corresponds to an $\Fq$-conic of a secant $\Fq$-subplane.
\end{proof}

\subsection{Proof of Theorem~\ref{thm-main}}

 Let $\bar\C$ be an $\Fq$-conic of $\PG(2,q^3)$. Then by
Lemma~\ref{conic-quartic-3}, $[\C]$ is a  $3$-special nrc of $\PG(6,q)$. Conversely, 
 let $\N_r$ be a $3$-special nrc in $\PG(6,q)$. By Theorem~\ref{2spec-h3} there are four possibilities for $r$, namely $2,3,4,6$. Lemmas~\ref{l1}, \ref{l2}, \ref{l3}, and \ref{l4} show that in each of these cases, $\N_r$ corresponds to an $\Fq$-conic in $\PG(2,q^3)$. 
This completes the proof of Theorem~\ref{thm-main}.\hfill$\square$

\section{Conclusion}\Label{conclude}

It seems likely that a similar characterisation holds for 
$\Fq$-conics of $\PG(2,q^n)$ in the Bruck-Bose representation in $\PG(2n,q)$, using a regular $(n-1)$-spread denoted $\S$. However some of the details in the techniques used in the article do not generalise easily, and it seems likely that a different approach is needed. 
 
 In particular, let $\S$ denote the Bruck-Bose regular $(n-1)$-spread in $\si$, the hyperplane at infinity of $\PG(2n,q)$. 
We conjecture that
a $k$-dim normal rational curve $\N_k$ in $\PG(2n,q)$ corresponds via the Bruck-Bose representation to  an  $\Fq$-conic of $\PG(2,q^n)$ if and only if $\N_k$ is $n$-special with respect to $\S$. The definition of $n$-special is conjectured to be a generalisation of Definitions~\ref{def-good} and \ref{def-2spec}, including the following: for a point $P$ in an extension of $\si$,  $w(P)=i$ if $i$ is the smallest integer such that $P$ lies in an $(i-1)$-space that meets $i$ transversal lines of $\S$; a point $P$ is $\S$-good if $w(P)\times o(P)=n\times s(P)$; and $\N_k$ is $n$-special if  the $k$ points of $\N_k$ at infinity are $\S$-good, and their 
  weights  sum to $2n$.

\bigskip\bigskip

{\bfseries Author information}

S.G. Barwick. School of Mathematical Sciences, University of Adelaide, Adelaide, 5005, Australia.
susan.barwick@adelaide.edu.au

W.-A. Jackson. School of Mathematical Sciences, University of Adelaide, Adelaide, 5005, Australia.
wen.jackson@adelaide.edu.au

P. Wild. Royal Holloway, University of London, TW20 0EX, UK. peterrwild@gmail.com


\begin{thebibliography}{999}


\bibitem{andr54} J.\ Andr\'e. \"Uber nicht-Desarguessche Ebenen mit
 transitiver Translationgruppe. {\em Math.\ Z.}, {\bf 60} (1954)
 156--186.


\bibitem{BJV52} S.G. Barwick. Ruled quintic surfaces  in $\PG(6,q)$. {\itshape    Innov. Incidence Geom.},  {\bfseries 17} (2019) 25--41.



%


%
\bibitem{BJ-FFA}
S.G. Barwick and W.-A. Jackson. Sublines and subplanes of PG$(2,q^3)$ in the Bruck-Bose representation in PG$(6,q)$.
{\em Finite Fields Appl.,} {\bfseries 18} (2012) 93--107.

\bibitem{BJ-iff} S.G. Barwick and W.-A. Jackson. A characterisation of tangent
  subplanes of PG$(2,q^3)$.
{\em Des. Codes  Cryptogr.},
{\bf 71} (2014) 541--545.
%

%

%
\bibitem{BJ-ext3} S.G. Barwick and W.-A.~Jackson. The exterior splash in $\PG(6,q)$: carrier conics.
{\em Adv. Geom.},  Adv. Geom.
{\bf 17}
 (2017) 407--422.
%

\bibitem{BJW1} S.G. Barwick, Wen-Ai Jackson and P. Wild.
Conics in Baer subplanes.  {\itshape    Innov. Incidence Geom.},   {\bfseries 17} (2019) 25--41.


\bibitem{BJW2}  S.G. Barwick, W.-A.~Jackson and P. Wild. 2-special normal rational curves in $\PG(4,q)$, \emph{J. Geom.}, 111 (2020)  3.

\bibitem{BJW3} S.G. Barwick, W.-A.~Jackson and P. Wild. The Bose representation of $\PG(2,q^3)$ in $\PG(8,q)$.  Australas. J. Combin.,  {\bfseries} 79 (2021) 31--54.



\bibitem{Bose} R.C. Bose. 
On a representation of the {B}aer subplanes of the {D}esarguesian
  plane {$PG(2,q^{2})$} in a projective five dimensional space. \emph{Teorie
  Combinatorie}. {\bfseries I}, Accad. Naz. dei Lincei, Rome, 1976, (Rome, 1973),
  381--391.


%
%
%
%
%
%
\bibitem{bruc64} R.H.\ Bruck and R.C.\ Bose. The construction of
 translation planes from projective spaces. {\em J.\ Algebra}, {\bf
 1} (1964) 85--102.

\bibitem{bruc66} R.H.\ Bruck and R.C.\ Bose. Linear
representations of projective planes in projective spaces. {\em
J.\ Algebra}, {\bf 4} (1966) 117--172.

%
%
%
\bibitem{CasseOKeefe} L.R.A. Casse and C.M. O'Keefe. Indicator sets for $t$-spreads of $\PG((s+1)(t+1)-1,q)$. {\em Bollettino U.M.I.}, {\bf 4} (1990) 13--33.


\bibitem{harris} J. Harris. Algebraic geometry, a first course. Springer-Verlag, New York, 1992.

\bibitem{H2}  J.W.P.\ Hirschfeld. {\em Finite Projective Spaces of
 Three Dimensions.} Oxford University Press, 1985.


\bibitem{HT} J.W.P.\ Hirschfeld and J.A.\ Thas. {\em General Galois Geometries.} Oxford University
  Press, 1991.
%
%
%
%
\bibitem{fieldreduction}
 M. Lavrauw and G. Van de Voorde. 
Field reduction and linear sets in finite geometry. \emph{Contemp. Math.,} {\bfseries 632} (2015) 271--293.

\bibitem{lunardon} G. Lunardon. Normal Spreads, \emph{Geom. Ded.} {\bfseries 75}  (1999), 245--261.


\bibitem{quinn-conic} C.T. Quinn.
The Andr\'{e}/Bruck and Bose representation
of conics in Baer subplanes of
$\PG(2,q^2)$.
{\em J. Geom.}, {\bf 74} (2002) 123--138.


\bibitem{RSV} {S. Rottey}, {J. Sheekey} and {G. Van de Voorde}. Subgeometries in the Andre/Bruck-Bose representation. {\em Finite Fields Appl.}, {\bf  35} (2015), 115--138.

\bibitem{segre} B. Segre, Teoria di Galois, fibrazioni proiettive e
               geometrie non desarguesiane, {\it Ann. Mat. Pura Appl.}
               {\bfseries 64} (1964), 1--64.



\bibitem{SR}
J.~G. Semple and L.~Roth. \emph{Introduction to Algebraic Geometry}. Oxford
  University Press,  1949.


\end{thebibliography}
\end{document}